\renewcommand{\d}{{\rm d}}
\newcommand{\E}{{\mathbb E}}
\newcommand{\F}{{\mathbb F}}
\renewcommand{\P}{{\mathbb P}}
\newcommand{\R}{{\mathbb R}}
\renewcommand{\S}{{\mathbb S}}
\newcommand{\N}{{\mathbb N}}
\newcommand{\Acal}{{\mathcal A}}
\newcommand{\Scal}{{\mathcal S}}
\newcommand{\Tcal}{{\mathcal T}}
\newcommand{\Mid}{{\ \Big|\ }}
\newcommand{\Fc}{{\mathcal F}}
\newtheorem{theorem}{Theorem}
\newtheorem{definition}[theorem]{Definition}
\newtheorem{lemma}[theorem]{Lemma}
\theoremstyle{remark}
\newtheorem{remark}[theorem]{Remark}
\newtheorem{example}[theorem]{Example}
\numberwithin{equation}{section}
\numberwithin{theorem}{section}
\definecolor{darkgreen}{rgb}{0,0.7,0}
\newcommand{\iii}{{\vert\kern-0.25ex\vert\kern-0.25ex\vert}}
\newcommand{\T}{\top}
\newcommand{\ints}{\int_{\R_+}}
\newcommand{\intt}{\int_t^T}
\renewcommand{\c}{\alpha}
\def \ep{\hbox{ }\hfill$\Box$}
\begin{document}

\newcommand{\dd}{\mathrm{d}}
\newcommand{\proofspace}{\mbox{} \\*}
\newcommand{\entrecro}[1]{\left[ #1 \right]}
\newcommand{\entrebra}[1]{\left \{  #1 \right \}}
\newcommand{\entrepar}[1]{\left(  #1 \right) }
\renewcommand{\bar}{\overline}
\newcommand{\1}{\mathbf{1}} 
\newcommand{\oo}[1]{ {\fontsize{6}{6}\selectfont \textcircled{\raisebox{-0.79pt}{\hspace*{0.001pt} #1} }}  } 
\newcommand{\be}[1]{\begin{equation} #1 \end{equation}}
\newcommand{\bec}[1]{\begin{equation} \begin{cases} #1\end{cases} \end{equation}}
\newcommand{\bes}[1]{\begin{equation} \begin{split} #1\end{split} \end{equation}}
\newcommand{\B}[1]{\boldsymbol{#1}}
\newcommand{\cali}[1]{ \mathcal{#1}}
\newcommand{\produit}{\bigstar}

\newcommand{\Chi}{\scalebox{1.1}{$\chi$}}
\newcommand{\Pg}{P^g_t}
\newcommand{\Px}{P^x_t}
\newcommand{\PI}[1]{P^i_t(#1)}
\newcommand{\Pc}{P^c_t}

\newcommand{\inter}{\llbracket 1,n \rrbracket}
\newcommand{\actionspace}{\mathbf{A}}
\newcommand{\lambdaspace}[1][]{%
\ifthenelse{\equal{#1}{}}{C_\lambda([0,T], L^1(\mu_1 \otimes \mu_2))}{C_{#1}([0,T], L^1(\mu_1 \otimes \mu_2))}%
}
\newcommand{\var}{\mathrm{Var}}
\renewcommand{\P}{\mathbb{P}}

\newcommand{\scal}[2]{#1.#2^{\otimes2}}

\renewcommand{\c}{\alpha}

\title{Linear--Quadratic  control for a class of stochastic Volterra equations: solvability and  approximation}

\author{Eduardo ABI JABER \footnote{Universit\'e Paris 1 Panth\'eon-Sorbonne, Centre d'Economie de la Sorbonne, 106, Boulevard de l'H\^opital, 75013 Paris, \sf  eduardo.abi-jaber at univ-paris1.fr.}
\quad\quad Enzo MILLER \footnote{LPSM, Universit\'e de Paris, Building Sophie Germain, Avenue de France, 75013 Paris,  \sf  enzo.miller at polytechnique.org}
\quad\quad  Huy\^en PHAM \footnote{LPSM, Universit\'e de Paris,  Building Sophie Germain, Avenue de France, 75013 Paris, \sf pham at lpsm.paris  
The work of this author  is supported by FiME (Finance for Energy Market Research Centre) and the ``Finance et D\'eveloppement Durable - Approches Quantitatives'' EDF - CACIB Chair.
}
}

\maketitle

\begin{abstract}
We provide an exhaustive treatment of Linear--Quadratic control pro\-blems for a class of stochastic Volterra equations of convolution type, whose kernels are Laplace transforms of certain signed matrix measures which are not necessarily finite. These equations are in general neither Markovian nor semimartingales, and include the fractional Brownian motion with  Hurst index smaller than $1/2$ as a special case.  We establish    the correspondence of the initial problem with  a possibly infinite dimensional Markovian  one in a Banach space, which allows us to identify the Markovian 
controlled state variables. Using a refined martingale verification argument combined with a squares  completion technique, we prove that the value function is of linear quadratic form in these state variables with a linear optimal feedback control, depen\-ding on non-standard Banach space valued Riccati equations.  Furthermore,  we show that the value function of the stochastic Volterra optimization problem can be approximated by that of conventional finite dimensional Markovian Linear--Quadratic problems, which is of crucial importance for  numerical implementation.
\end{abstract}

\vspace{5mm}

\noindent {\bf MSC Classification:} 93E20, 49N10, 60G22, 60H20.

\vspace{5mm}

\noindent {\bf Key words:} Stochastic Volterra equations, linear-quadratic control, Riccati  equations in Banach space.

\newpage

\tableofcontents

\section{Introduction}

Let us consider the basic problem of controlling the drift $\alpha$ of a real-valued Brownian motion $W$
\begin{align}  \label{dynLQclassi}
X_t^\alpha &=\;   \int_0^t \alpha_s ds + W_t,  \quad t \geq 0, 
\end{align}
in order to steer the system to zero with minimal effort by minimizing over a finite horizon the cost functional
\begin{align*}
J(\alpha) &= \; \E\Big[ \int_0^T \big( |X_t^\alpha|^2 + \alpha_t^2 \big) dt \Big]. 
\end{align*}
This problem fits into the class of linear-quadratic (LQ) regulator problem, and  can be explicitly solved by different methods including standard dynamic programming, maximum principle or spike variation methods relying on It\^o stochastic calculus. It is well-known, see e.g.   \cite[Chapter 6]{yong1999stochastic}, that the optimal control $\alpha^*$ is in linear feedback form  
with respect to the optimal state process $X^*$ $=$ $X^{\alpha^*}$: 
\begin{align*}
\alpha_t^* & = \; - \Gamma(t) X_t^*, \;\;\; 0 \leq t \leq T,
\end{align*}
where $\Gamma$ is a deterministic nonnegative function solution to a Riccati equation, actually expli\-citly given  by $\Gamma(t)$ $=$ $\tanh(T-t)$, 
and thus the associated optimal state process $X^*$ is a mean-reverting Markov process. 

Suppose now that the noise $W$ is replaced by a Gaussian process with memory, ty\-pically a fractional Brownian motion, or more generally by stochastic Volterra processes.  It is then natural to ask how the structure of the solution is modified, and how it can be derived, knowing that, in this case,  
usual methods for Markov processes and stochastic calculus for semimartingales  can no longer be applied.

Stochastic Volterra processes appear in different  applications for population dynamics, tumour growth, or energy finance, and provide suitable models  for dynamics with memory and delay,  see \cite{baretal11, GLS:90, sch06}.  These processes have known a growing interest in finance with the recent  empirical findings on  rough volatility in \cite{volatilityrough2014}.  Stochastic Volterra equations have been studied by numerous authors, see   \cite{AJLP17, MS:15, PP:90} and the references therein.

In this paper, we address  the optimal control of $d$-dimensional stochastic Volterra equations of the form: 
\begin{align} \label{eqVolterra}
X_t^\alpha & = \; g_0(t) + \int_0^t  K(t-s) \Big( b(s,X_s^\alpha,\alpha_s) ds + \sigma(s,X_s^\alpha,\alpha_s) dW_s \Big), 
\end{align}
where $g_0$ is a deterministic function and $K$ is a (convolution) matrix-valued kernel  of the form
\begin{align*}
K(t)=\int_{\R_+} e^{-\theta t} \mu(d\theta), \quad t >0,
\end{align*}
for some signed matrix measure $\mu$. Our framework covers the case of  the fractional kernel $K(t)$ $=$ 
$t^{H- 1/2}/\Gamma(H+1/2)$ with $H$ $\leq$ $1/2$, arising  from  the Mandelbrot-Van Ness representation of the fractional Brownian motion with Hurst index $H$. 
We shall mainly focus on the case where the coefficients $b$ and $\sigma$ are in linear form with respect to the state and control arguments, and the cost to be minimized is of 
linear-quadratic form.

Since the (controlled) stochastic Volterra process \eqref{eqVolterra} is neither Markovian nor a semimartingale,  it is natural to  consider Markovian lifts  for which suitable stochastic tools and control methods apply. Inspired by the Markovian representation of fractional Brownian motion introduced in 
\cite{carmona1998fractional}, and more recently generalized to several un--controlled stochastic Volterra equations in \cite{AJEE18b, cuchiero2018generalized, harms2019affine}, we establish the correspondence of the initial problem 
with a lifted  Markovian controlled system $(Y_t^\alpha)_{t\in [0,T]}$ taking its values in the possibly infinite-dimensional Banach space $L^1(\mu)$. 
Next, in the LQ case, i.e., when $b,\sigma$ are of linear form, and the cost function is  linear-quadratic, we prove by means of a refined martingale verification argument combined with a squares completion technique,  that the value function is of quadratic form while the optimal control is in linear feedback form  with respect to these lifted state variables.  The coefficients of the quadratic and linear form of the value function and optimal control are expressed in terms of a non-standard system of   integral operator Riccati equations  whose solvability (existence and uniqueness) is proved in \cite{abietal19b}. A related infinite-dimensional Riccati equation appeared in \cite{alfonsi2013capacitary} for the minimization problem of an energy functional defined in terms of a non-singular (i.e. $K(0)$ $<$ $\infty$) completely monotone kernel.
We stress that, although there exists several results for LQ control problems in infinite-dimension, and even for Volterra processes (see \cite{bonaccorsi2012optimal}), they cannot be applied in our Banach-space context as they only concern Hilbert spaces. As detailed above, the first  contribution of our paper lies in the rigorous derivation of the optimal solution for the stochastic Volterra control problem. A second important feature of our approach is to provide a natural approximation of such solution  by a suitable discretization of the measure $\mu$, leading to conventional finite-dimensional LQ control problems, which involve standard matrix Riccati equations that can be numerically implemented.


The paper is organized as follows.  In Section \ref{secpreli}, we formulate the control problem, justify the correspondence with the lifted Markovian system in the Banach space $L^1(\mu)$, and formally derive the Riccati equation.  Section \ref{secmain} presents the main results: 
\begin{enumerate}
	\item  \label{introi}
	the analytic expression and solvability of the value function and optimal control in terms of  
	a  Banach-space valued Riccati equation. We illustrate our general result  on the LQ regulator example mentioned in the beginning of the introduction with a fractional noise with Hurst parameter $H\leq 1/2$;
	\item \label{introii}	
	a general stability result  for the solution of the stochastic Volterra control problem with respect to the kernel and its application for the approximation of the solution.
\end{enumerate}
\vspace{-0.2cm}  In Section \ref{S:sde}, we prove a general existence result for SDEs with Lipschitz coefficients in Banach spaces, which is used in particular to get the existence of an optimal control for the LQ Volterra control problem.  In Section \ref{S:verification}, we  provide a refined martingale verification theorem for 
LQ control problem in our context,  which  mainly relies on  It\^o's formula for quadratic functions in Banach spaces.    
The proof of the  solvability result \ref{introi} is completed in Section~\ref{S:proof}, and that of the stability result \ref{introii}  is detailed in Section~\ref{secapprox}. 

\vspace{2mm}
\noindent \textbf{Related literature.} The optimal control of stochastic Volterra equations has been consi\-dered in \cite{yong2006backward} by maximum principle method  leading to a characterization of  the solution in terms of a backward stochastic Volterra equation for the adjoint process.  In \cite{agram2015malliavin}, the authors also use  the maximum principle together with Malliavin calculus to  obtain a co\-rresponding adjoint equation as a standard backward SDE.  Although the kernel considered in these aforementioned papers is not  restricted to be of convolution type, 
the required conditions do not allow singularity of $K$  at zero, hence excluding the case of a fractional kernel with parameter $H$ $<$ $1/2$.   More recently, an extended Bellman equation has been derived in \cite{han2019time} for the associated controlled  Volterra equation. 


The solution to the LQ control problem as in \eqref{dynLQclassi} with controlled drift and additive noise has been obtained in  \cite{klepetal03} when 
the noise is  a fractional Brownian motion with  Hurst parameter $H$ $>$ $1/2$, and in \cite{duncan2013linear} when the noise is a general Gaussian process with an optimal control expressed   
as the sum of the well-known linear feedback control for the associated deterministic linear-quadratic control problem and the prediction of the response of  the system to the future noise process. 
Recently, the paper \cite{wan18} investigated LQ problem of stochastic Volterra equations by providing characterizations of optimal control in terms of some forward-backward system, but leaving  aside their solvability, and under some coefficients assumptions that preclude  singular kernels such as  the fractional kernel with parameter $H$ $<$ $1/2$.  

{Finally, we mention that some financial problems such as optimal execution with transient Market impact \cite{alfonsi2013capacitary} and  hedging in the presence of Market impact with fractional Brownian motion \cite{bank2017hedging}  can be formulated as  controlled Volterra problems of linear-quadratic type. }

\vspace{2mm}

\noindent \textbf{Notations.} For a Banach space $\mathcal B$, $L^2([0,T],\mathcal B)$ denotes the space of measurable and square integrable functions from $[0,T]$ to $\mathcal B$. 

For any $d \times d_1$-matrix valued measure $\mu_1$ on $\R_+$,  we denote by  $|\mu_1|$  its total variation,  which is a scalar nonnegative measure,  refer to  \cite[Section 3.5]{GLS:90} for more details. The space $L^1(\mu_1)$ consists of $\mu_1$-a.e.~equivalence classes of $|\mu_1|$-integrable functions $\varphi:\R_+\to \R^{d_1}$ endowed  with the norm
$\|\varphi \|_{L^1(\mu_1)} =\int_{\R_+} |\mu_1|(d\theta) |\varphi(\theta)|$, where we identify the function $\varphi$ with its class of equivalence.  For any such $\varphi$ the integral
$$ \int_{\R_+} \mu_1(d\theta)\varphi(\theta) $$ 
is well defined by virtue of the inequality
$$ \left| \int_{\R_+} \mu_1(d\theta)\varphi(\theta)  \right |  \leq   \int_{\R_+} |\mu_1|(d\theta)|\varphi(\theta) | ,   $$ 
see \cite[Theorem 5.6]{GLS:90}.
If $\mu_2$ is a  $d \times d_2$-matrix valued measure, the space  $L^1(\mu_1 \otimes \mu_2)$ consists of $\mu_1\otimes \mu_2$-a.e.~equivalence classes of 
$|\mu_1|\otimes |\mu_2|$-integrable functions $\Phi:\R_+^2 \to \R^{d\times d}$  endowed  with the norm $\|\Phi\|_{L^1(\mu_1 \otimes \mu_2)}=\int_{\R^2_+} {|\mu_1|}(d\theta) 
|\Phi(\theta,\tau)| {|\mu_2|}(d\theta) < \infty.$ 
For any such $\Phi$, the integral 
$$ \int_{\R_+^2} \mu_1(d\theta)^\top \Phi(\theta,\tau) \mu_2(d\tau) $$ 
is again well defined by virtue of \cite[Theorem 5.6]{GLS:90}. Both $(L^1(\mu_1),\|\cdot\|_{L^1(\mu)})$ and $(L^1(\mu_1\otimes \mu_2), \|\cdot\|_{L^1(\mu_1\otimes \mu_2)})$ are Banach spaces, see \cite[Theorem 3.11]{rudin2006real}. 
We also denote by $L^\infty(\mu_1)$ the set of measurable functions $\psi$ $:$  $\R_+\to\R^{d_1}$, which are bounded $\mu_1$-a.e., and by  $L^\infty(\mu_1\otimes\mu_2)$ the set of  measurable functions $\Phi:\R_+^2 \to \R^{d\times d}$, which are bounded $\mu_1\otimes \mu_2$-a.e, {that we endow with their usual norms 
	$\|\psi \|_{L^\infty(\mu_1)}$ and $\|\Phi \|_{L^\infty(\mu_1\otimes\mu_2)}$.}

\section{Formulation of the problem and preliminaries} \label{secpreli}

Let $(\Omega,\Fc,\F=(\Fc_t)_{t \geq 0}, \P)$ be a filtered probability space  supporting a one dimensional Brownian motion $W$. Fix $T>0$ and $d,d',m \in \mathbb N$. We consider  a controlled $d$-dimensional stochastic Volterra equation
\begin{align}
X^{\alpha}_t &= g_0(t) + \int_0^t K(t-s)b(s, X^{\alpha}_s,\alpha_s)ds + \int_0^t K(t-s)\sigma(s, X^{\alpha}_s,\alpha_s)dW_s, 
\label{eq:sve}
\end{align}
where $\alpha$  is an element of the admissible set
\begin{align*}
\Acal = \left\{ \alpha: \Omega \times [0,T] \to \R^m  \mbox{ progressively measurable such that } \sup_{0\leq t\leq T} \E\left[ |\alpha_t|^4\right] < \infty \right\},
\end{align*}
$g_0:[0,T]\to \R^d$ is a measurable function,  $K: [0,T] \to \R^{d \times d'}$ is a measurable kernel, and $b,\sigma:[0,T]\times \R^d\times \R^m \to \R^{d'}$  are of affine form:
\begin{align*}
b(t, x,a)&=\beta(t) + Bx + Ca, \\
\sigma(t, x,a)&= \gamma(t) +  D x  + Fa,
\end{align*}
where  $B,D\in \R^{d'\times d}$, $C,F\in \R^{d' \times m}$, and $\beta, \gamma: [0,T] \to  \R^{d'}$ are mea\-surable functions.  
We are chiefly interested in the case where $K$ is the Laplace transform 
\begin{align}\label{eq:cmmu}
K(t)=\int_{\R_+} e^{-\theta t} \mu(d\theta), \quad t >0,
\end{align}
of a signed $d\times d'$--measure $\mu$ satisfying 
\begin{align}\label{eq:totalvar}
\int_{\R_+} \left( 1 \wedge \theta^{-1/2}\right) |\mu|(d\theta)  < \infty,
\end{align}
{and 
	\begin{align}\label{eq:mucont}
	\theta\mapsto \int_0^T e^{-\theta u } u^{\zeta-1}  du   \in L^1(|\mu|), \quad \mbox{for some } \zeta \in (0,1/4),
	\end{align}}
where $|\mu|$ denotes the total variation of $\mu$. While condition  \eqref{eq:totalvar}  does not exclude   $\mu_{ij}(\R_+)=\pm \infty$  for some $i\leq d$, $j\leq d'$, or equivalently  a singularity of  the kernel $K$  at $0$, it does ensure that $K \in L^2([0,T],\R^{d\times d'})$ {and that $|\mu|$ is $\sigma$-finite}, see Lemma~\ref{L:L2kernel}.  The former implies that the  stochastic  convolution
\begin{align}\label{eq:stoconv}
t \mapsto  \int_0^t K(t-s)\xi_s dW_s
\end{align}  
is well defined as an It\^o integral, for  every  $t \leq T$,  for any progressively measurable process $\xi$ such that 
$$ \sup_{t\leq T} \E\left[|\xi_t|^2\right]<\infty.$$ 
Indeed, 
\begin{align*}
\E \left[  \int_0^t |K(t-s)|^2 |\xi_s|^2 ds \right]\leq \|K\|_{L^2(0,T)}^2 \sup_{s\leq T} \E\left[|\xi_s|^2\right] < \infty,
\end{align*} 
for every $t\leq T$.  The convolution
$$   
t \mapsto \int_0^t K(t-s)\xi_s ds,
$$
is also well defined for  every $t\leq T $, by virtue of the Cauchy--Schwarz inequality.    {Condition \eqref{eq:mucont} would yield the existence of a continuous version.}

We can now make precise the concept of solution to the controlled equation \eqref{eq:sve}.  By a solution to \eqref{eq:sve}, we mean an $\F$-adapted process  $X^\alpha$ with continuous sample paths  such  that  \eqref{eq:sve}  holds for all $t\leq T$, $ \P$-almost surely.  Under \eqref{eq:cmmu}-\eqref{eq:totalvar}-\eqref{eq:mucont}, assuming that 
$\beta,\gamma$ are measurable and bounded,  Theorem~\ref{P:strong} shows that the controlled stochastic  Volterra equation \eqref{eq:sve} admits a unique continuous solution $X^{\alpha}$, for any  continuous input curve $g_0$,  and any admissible control  $\alpha \in \Acal$. Furthermore, it holds that
\begin{align}
\sup_{0\leq t\leq T}\E \left[|X^{\alpha}_t|^4\right] <\infty. \label{eq:estimate moment X}
\end{align}

\begin{remark}
	Notice that due to the possible singularity of the kernel $K$, and in contrast with standard stochastic differential equations, the solution $X^\alpha$ to the controlled stochastic  Volterra equation does not satisfy in general the usual square integrability condition of the form: $\E[\sup_{0\leq t\leq T}|X_t^\alpha|^2]$ $<$ $\infty$. For this reason, we impose the stronger  condition $\sup_{t\leq T}\E[|\alpha|_t^4]<\infty$ for the set of admissible controls $\Acal$, which will turn out to be crucial for the martingale verification result, see Section \ref{S:verification}.
	\ep
\end{remark}

\vspace{1mm}

We consider a  cost functional given by
\begin{align}
\label{eq:original_problem}
J(\alpha)  &= \E\Big[\int_0^T  f(X^{\alpha}_s,\alpha_s)d s\Big], 
\end{align}
where the running cost $f$ has the following  quadratic form
\begin{align} \label{fquadra}
f(x,\alpha) &=x^\top Q x + \alpha^\top N\alpha + 2 x^\top L,
\end{align}
for some $Q \in \S^d_+, N \in \S^m_{+}$ and $L\in \R^d$. Here $\S^d_+$ denotes the set of $d$-dimensional nonnegative symmetric matrices. 
Note that by virtue of \eqref{eq:estimate moment X}, $J(\alpha)$ is well defined for any $\alpha \in \Acal$. The aim is to solve
\begin{align}\label{eq:main problem}
V_0 = \inf_{\alpha \in \Acal} J(\alpha).
\end{align}	

Before going further, let us  mention several kernels of interest that satisfy \eqref{eq:cmmu}-\eqref{eq:totalvar}--\eqref{eq:mucont}.
\begin{example} \label{ex:kernels}
	\begin{enumerate}
		\item Smooth kernels: if $ |\mu_{ij}(\R_+)|<\infty$, for every $i=1,\ldots, d, j=1,\ldots,d'$, then 
		\eqref{eq:totalvar}--\eqref{eq:mucont} are  satisfied and $K$ is infinitely differentiable on $[0,T]$.  This is the case,  for instance, when $\mu(d\theta)=\sum_{i=1}^n c^n_i \delta_{\theta_i^n}(d\theta)$, for some $c^n_i \in \R^{d\times d'}$ and $\theta^n_i \in \R_+$, $i=1,\ldots,n$, which corresponds to 
		$$ K(t) = \sum_{i=1}^n c^n_i e^{-\theta^n_i t }.  $$
		\item The fractional kernel ($d=d'=1$)
		\begin{align}\label{eq:kernelfrac}
		K_H(t)=\frac{t^{H-1/2}}{\Gamma(H+1/2)},
		\end{align}
		for some $H \in (0,1/2)$,  which is the Laplace transform of 
		\begin{align}\label{eq:mufrac}
		\mu_H(d\theta)=\frac{\theta^{-H-1/2}}{\Gamma(H+1/2)\Gamma(1/2-H)} d\theta,
		\end{align}
		and more generally the Gamma kernel $K(t)=K_H(t){e}^{-\zeta t}$ for  $H \in (0,1/2)$ and $\zeta \in \R$
		for which 
		$$ \mu(d\theta)=\frac{(\theta-\zeta)^{-H-1/2}\mathbbm 1_{(\zeta,\infty)}(\theta)}{\Gamma(H+1/2)\Gamma(1/2-H)} d\theta.$$
		\item\label{ex:kernels:iii} If $K_1$ and $K_2$ satisfy~\eqref{eq:cmmu}, then so does $K_1+K_2$ and $K_1K_2$ with the respective measures 
		$\mu_1 + \mu_2 $  and $\mu_1 * \mu_2$. When $\mu_1$, $\mu_2$ satisfy \eqref{eq:totalvar}-\eqref{eq:mucont},  it is clear that $\mu_1+\mu_2$ also satisfies \eqref{eq:totalvar}-\eqref{eq:mucont}.  Condition  \eqref{eq:totalvar}  is  satisfied for the convolution   $\mu_1 * \mu_2$ provided  $\int_{[1,\infty)^2} (\theta +\tau)^{-1/2} \mu_1(d\theta)\mu_2(d\tau)$ $<$ $\infty$, which is the case for instance if either 
		$\mu_1(\R_+)$ or $\mu_2(\R_+)$ are finite. {Similarly for \eqref{eq:mucont}.}
		\item
		If $K$ is a completely monotone kernel, i.e.~$K$ is infinitely differentiable on $(0,\infty)$ such that $(-1)^n K^{(n)}(t)$ is nonnegative for each $t>0$, then, by Bernstein's theorem, there exists a nonnegative measure $\mu$ such that \eqref{eq:cmmu} holds, see   \cite[Theorem 5.2.5]{GLS:90}. 
	\end{enumerate}
	\ep
\end{example}

\subsection{Markovian representation}

The solution $X^{\alpha}$  of \eqref{eq:sve} is in general neither Markovian nor a semimartingale as illustrated by the   Riemann--Liouville fractional Brownian motion 
$$ t \mapsto \frac{1}{\Gamma(H+1/2)} \int_0^t (t-s)^{H-1/2}  dW_s,  \quad H\in (0,1/2],$$
which is Markovian and a martingale only for $H=1/2$.  {Markovian representations of fractional Brownian motion have been introduced in 
	\cite{carmona1998fractional}, and more recently generalized to several un--controlled stochastic Volterra equations for kernels of the form \eqref{eq:cmmu}, see  
	\cite[Section 4]{AJEE18b};  \cite[Section 5.1]{cuchiero2018generalized}; 
	\cite{harms2019affine}. Inspired by these approaches,} we establish in the following theorem, by means of   stochastic Fubini's theorem,  the correspondence of \eqref{eq:sve} with a possibly infinite dimensional Markovian controlled system of the form 
\begin{equation}  \label{eq:yalpha}
\left\{
\begin{array}{ccl}
dY^{\alpha}_t(\theta) &=  &\left(- \theta Y^{\alpha}_t(\theta) + \tilde{b}\left(t,\int_{\R_+}  \mu(d\tau)Y^{\alpha}_t(\tau), \alpha_t \right)\right)dt  \\ 
& & \quad\quad  + \;           \tilde{\sigma}\left(t,\int_{\R_+}  \mu(d\tau)Y^{\alpha}_t(\tau), \alpha_t \right)dW_t   \\
Y_0^{\alpha}(\theta) &=&  0,
\end{array}
\right.
\end{equation}
where the  coefficients  $\tilde b:[0,T]\times \R^d\times \R^m \to \R^{d'}$, $\tilde \sigma: [0,T]\times  \R^d \times \R^m \to \R^{d'}$ are defined by
\begin{align}
\tilde{b}(t, x,a)&=\tilde{\beta}(t) + Bx + Ca,  \label{eq:btilde} \\
{ \tilde{\sigma}(t, x,a)}&=\tilde{\gamma}(t) +  D x  + Fa, \label{eq:sigmatilde}
\end{align}
with
$$ \tilde \beta = \beta + Bg_0\; \mbox{ and } \; \tilde \gamma =  \gamma + Dg_0.$$

\begin{theorem}
	\label{prop:representation_of_X}  	Let $g_0,\beta,\gamma$ be bounded functions on $[0,T]$ and $K$ be given as in  \eqref{eq:cmmu} such that \eqref{eq:totalvar}-\eqref{eq:mucont} hold.  Fix $\alpha \in \Acal$. Assume that there exists a progressively measurable process {$X^{\alpha}$ that}  solves \eqref{eq:sve}, $\P$-a.s., for each $t\leq T$, and that \eqref{eq:estimate moment X} holds. Then,  for each $t\leq T$, $X^{\alpha}_t$ admits the representation 
	\begin{align}
	\label{eq:repX}
	X_t^{\alpha}&= g_0(t) + \int_{\R_+}  \mu(d\theta)Y^{\alpha}_t(\theta), 
	\end{align}
	where, for each $\theta\in \R_+$,
	\begin{align}\label{eq:defY}
	Y^{\c}_t(\theta) = \int_0^t e^{-\theta(t-s)} b(s,X_s^{\alpha},\alpha_s)ds + \int_0^t e^{-\theta(t-s)} \sigma(s,X_s^{\alpha},\alpha_s)dW_s.
	\end{align}	
	In particular, $Y^{\alpha}$ can be chosen to have continuous sample paths   in $L^1(\mu)$, {satisfying}
	\begin{align}
	\sup_{t\leq T } \E\left[ \| Y^{\alpha}_t\|^4_{L^1(\mu)}\right] &< \infty,  \label{eq:estimateY4} \\
	\sup_{t\leq T }\sup_{\theta \in \R_+} |Y_t^{\c}(\theta)|&<\infty,  \label{eq:estimateY4.2}
	\end{align}
	and for each $\theta \in \R_+$, $t\mapsto Y^{\alpha}_t(\theta)$   solves \eqref{eq:yalpha}.
	Conversely, assume that there exists  {a solution $Y^{\alpha}$ to  \eqref{eq:yalpha} that is continuous  in $L^1(\mu)$}, i.e.,  such that
	\begin{align} 
	Y^{\c}_t(\theta) &= \int_0^t e^{-\theta(t-s)} \tilde b\left(s,\int_{\R_+}  \mu(d\tau)Y^{\alpha}_s(\tau),\alpha_s\right)ds  \nonumber    \\ 
	&  \quad + \int_0^t e^{-\theta(t-s)} \tilde \sigma\left(s,\int_{\R_+}  \mu(d\tau)Y^{\alpha}_s(\tau),\alpha_s\right)dW_s, \quad  \P\otimes \mu-a.e.    \label{eq:Yintegmu}  
	\end{align}
	for each $t\leq T$,  and that \eqref{eq:estimateY4} holds. Then, the process $X^{\alpha}$ given  by \eqref{eq:repX} is  a continuous solution to    \eqref{eq:sve} such that  \eqref{eq:estimate moment X}   holds.
\end{theorem}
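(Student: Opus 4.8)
The plan is to prove both implications by interchanging the order of integration between the measure $\mu(\d\theta)$ and the time integrals $\int_0^t\cdots\d s$ and $\int_0^t\cdots\d W_s$, i.e.\ by a deterministic and a stochastic Fubini theorem, after substituting the Laplace representation $K(t-s)=\int_{\R_+}e^{-\theta(t-s)}\mu(\d\theta)$ from \eqref{eq:cmmu} into the convolutions in \eqref{eq:sve}. The entire argument rests on a single integrability estimate that I would isolate first. Since $\sigma(s,x,a)=\gamma(s)+Dx+Fa$ with $\gamma$ bounded, the moment bound \eqref{eq:estimate moment X} on $X^\alpha$ and admissibility $\alpha\in\Acal$ yield $\sup_{s\le T}\E[|\sigma(s,X_s^\alpha,\alpha_s)|^4]<\infty$, and likewise for $b$. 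Combined with the elementary kernel estimate $\int_0^t e^{-2\theta(t-s)}\d s\le 1\wedge(2\theta)^{-1}$ and condition \eqref{eq:totalvar}, namely $\int_{\R_+}(1\wedge\theta^{-1/2})|\mu|(\d\theta)<\infty$, this gives the finiteness of
\[ \int_{\R_+}|\mu|(\d\theta)\Big(\E\Big[\int_0^t e^{-2\theta(t-s)}|\sigma(s,X_s^\alpha,\alpha_s)|^2\d s\Big]\Big)^{1/2}<\infty, \]
which is precisely the hypothesis needed to apply the stochastic Fubini theorem; the analogous easier bound with $|b|$ in place of $|\sigma|^2$ handles the drift by ordinary Fubini. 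I expect the matching between the weight $1\wedge\theta^{-1/2}$ in \eqref{eq:totalvar} and the $L^2$-in-time size of the exponential kernel to be the main obstacle and the genuine heart of the proof.

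For the forward implication, inserting \eqref{eq:cmmu} into \eqref{eq:sve} and interchanging $\int_{\R_+}\mu(\d\theta)$ with the time integrals, justified by the estimate above, produces \eqref{eq:repX} with $Y_t^\alpha(\theta)$ given by \eqref{eq:defY}. I would then obtain \eqref{eq:estimateY4} by Minkowski's integral inequality, $\big(\E[\|Y_t^\alpha\|_{L^1(\mu)}^4]\big)^{1/4}\le\int_{\R_+}|\mu|(\d\theta)\big(\E[|Y_t^\alpha(\theta)|^4]\big)^{1/4}$, estimating the inner fourth moment by the Burkholder--Davis--Gundy inequality and the same kernel bound to recover the integrable weight $1\wedge\theta^{-1/2}$; the uniform bound \eqref{eq:estimateY4.2} follows from $e^{-\theta(t-s)}\le1$ for the drift and a joint continuity argument in $(t,\theta)$ for the martingale part. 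A Kolmogorov-type continuity estimate on $\E[\|Y_t^\alpha-Y_r^\alpha\|_{L^1(\mu)}^p]$ in the Banach space $L^1(\mu)$ then supplies a continuous modification. Finally, for fixed $\theta$ the exponential-kernel convolution \eqref{eq:defY} solves the linear SDE $\d Y_t^\alpha(\theta)=-\theta Y_t^\alpha(\theta)\d t+b(s,X_s^\alpha,\alpha_s)\d t+\sigma(s,X_s^\alpha,\alpha_s)\d W_t$; rewriting $b(s,X_s^\alpha,\alpha_s)=\tilde b\big(s,\int_{\R_+}\mu(\d\tau)Y_s^\alpha(\tau),\alpha_s\big)$ by means of \eqref{eq:repX} together with $\tilde\beta=\beta+Bg_0$ (and the analogue for $\sigma$ with $\tilde\gamma=\gamma+Dg_0$) turns this into \eqref{eq:yalpha}.

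For the converse, given $Y^\alpha$ continuous in $L^1(\mu)$ satisfying \eqref{eq:Yintegmu} and \eqref{eq:estimateY4}, I define $X^\alpha$ by \eqref{eq:repX}, so that $\int_{\R_+}\mu(\d\tau)Y_s^\alpha(\tau)=X_s^\alpha-g_0(s)$ and hence $\tilde b\big(s,X_s^\alpha-g_0(s),\alpha_s\big)=b(s,X_s^\alpha,\alpha_s)$ and $\tilde\sigma(\cdots)=\sigma(\cdots)$. Integrating \eqref{eq:Yintegmu} against $\mu(\d\theta)$ and applying the stochastic Fubini theorem in the reverse direction — whose hypotheses are checked exactly as in the first step, now using \eqref{eq:estimateY4} and the boundedness of $\tilde\beta,\tilde\gamma$ — recollapses $\int_{\R_+}\mu(\d\theta)e^{-\theta(t-s)}$ into $K(t-s)$ and yields precisely \eqref{eq:sve}. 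Continuity of $t\mapsto\int_{\R_+}\mu(\d\theta)Y_t^\alpha(\theta)$ follows from $\big|\int_{\R_+}\mu(\d\theta)(Y_t^\alpha(\theta)-Y_r^\alpha(\theta))\big|\le\|Y_t^\alpha-Y_r^\alpha\|_{L^1(\mu)}$ and the $L^1(\mu)$-continuity of $Y^\alpha$, while \eqref{eq:estimate moment X} is read off from $|X_t^\alpha|\le|g_0(t)|+\|Y_t^\alpha\|_{L^1(\mu)}$ together with \eqref{eq:estimateY4}. Besides the stochastic Fubini estimate flagged above, the one remaining technical point I would be careful about is the construction of a jointly measurable aggregated version of the family $\{Y_t^\alpha(\theta)\}_\theta$ and its $L^1(\mu)$-valued continuity, so that the pointwise-in-$\theta$ identities hold simultaneously and the representation \eqref{eq:repX} is meaningful as a Banach-space equality.
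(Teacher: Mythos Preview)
Your proposal is correct and follows the same core strategy as the paper: substitute the Laplace representation of $K$, justify the interchange of $\int_{\R_+}\mu(\d\theta)$ with the time integrals via the stochastic Fubini theorem, and check the required integrability through the match between $\int_0^t e^{-2\theta(t-s)}\d s\lesssim 1\wedge\theta^{-1}$ and condition \eqref{eq:totalvar}. The paper's justification of the Fubini step uses Jensen's inequality on the normalized measure $(1\wedge\theta^{-1/2})|\mu|(\d\theta)/\int(1\wedge\tau^{-1/2})|\mu|(\d\tau)$, which amounts to the same estimate you isolated; the converse direction is, as you say, the same computation run backward.

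The one place where your sketch diverges from---and is weaker than---the paper is the pathwise bound \eqref{eq:estimateY4.2} and the $L^1(\mu)$-continuity. Your proposed ``joint continuity argument in $(t,\theta)$'' for the martingale part does not by itself yield $\sup_{\theta\in\R_+}|Y_t^\alpha(\theta)|<\infty$, since $\R_+$ is non-compact and you would still need a uniform-in-$t$ decay as $\theta\to\infty$. The paper (Lemma~\ref{L:contuinuity Y}) instead integrates by parts to write $Y_t(\theta)=e^{-\theta t}Z_t+\theta\int_0^t e^{-\theta(t-s)}(Z_t-Z_s)\d s$ with $Z_t=\int_0^t b_s\d s+\int_0^t\sigma_s\d W_s$, and then invokes the $\zeta$-H\"older regularity of $Z$ (via Kolmogorov--Chentsov, available thanks to the fourth-moment bounds) to obtain the explicit dominating function $|Y_t(\theta)|\le c_{T,\zeta}(\omega)\,\zeta\int_0^T e^{-\theta u}u^{\zeta-1}\d u$. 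This single estimate is bounded in $\theta$, giving \eqref{eq:estimateY4.2}, and is $|\mu|$-integrable by Lemma~\ref{L:L2kernel}, so that dominated convergence yields the $L^1(\mu)$-continuity directly---bypassing the Banach-space Kolmogorov criterion you proposed.
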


\begin{proof}
	Fix $t\leq T$ and set $Z^{\alpha}_t=\int_0^{t}b(s,X_s^{\alpha},\alpha_s)ds + \int_0^{t}\sigma(s,X_s^{\alpha},\alpha_s)dW_s  $. We first plug \eqref{eq:cmmu} in \eqref{eq:sve}  to get
	$$X_t^{\alpha} -g_0(t)= \int_0^t K(t-s)dZ^{\alpha}_s=  \int_0^t \left( \int_{\R_+} \mu(d\theta) e^{-\theta(t-s)}  \right) dZ^{\alpha}_s. $$
	An application of stochastic Fubini's, see  \cite[Theorem~2.2]{V:12},  yields 
	\begin{align*}
	\int_0^t \left( \int_{\R_+} \mu(d\theta) e^{-\theta(t-s)}  \right) dZ^{\alpha}_s = \int_{\R_+} \mu(d\theta)  \left(\int_0^t e^{-\theta(t-s)}   dZ^{\alpha}_s\right),
	\end{align*}
	where the interchange is possible since by Jensen's inequality on the normalized measure $(1\wedge \theta^{-1/2})\mu(d\theta)/ \int_{\R_+}(1\wedge \tau^{-1/2})\mu(d\tau)$ the term
	\begin{align*}
	\int_{\R_+}  \left(\int_0^t e^{-2\theta(t-s)} \E\left[d\langle Z^{\alpha}\rangle_s \right]  \right)^{1/2} |\mu|(d\theta) 
	\end{align*}
	is bounded from above, for some $c>0$ by,
	\begin{align*}
	c\sqrt{ \sup_{r \leq T} \left(  |\gamma(r)|^2  +   {\E\left[ |\alpha_r|^2\right]}+ \E\left[ |X^{\alpha}_r|^2 \right]\right)}  
	\left(\int_{\R_+}   \frac{ 1-e^{-2\theta t}}{2\theta}  \left(1\wedge \theta^{1/2}\right)|\mu|(d\theta)\right)^{1/2}
	\end{align*}
	which is finite due to
	the inequality 
	\begin{align}\label{eq:inequalityexp}
	\frac{\left( 1-e^{-2\theta t}\right)}{2\theta} \leq \frac 1 2 \left( 1 \vee 2t\right) \left(1 \wedge \theta^{-1} \right),
	\end{align} 
	condition \eqref{eq:totalvar},
	the boundedness of $\gamma$, the admissible set $\Acal$ and the estimate \eqref{eq:estimate moment X}. 
	The interchange is  justified similarly for the drift part. 
	{It} follows that 
	$$X_t^{\alpha}  = g_0(t)+  \int_{\R_+} \mu(d\theta)  \left(\int_0^t e^{-\theta(t-s)}   dZ^{\alpha}_s\right)= g_0(t) +  \int_{\R_+} \mu(d\theta)  Y^{\c}_t(\theta) $$
	where $Y^{\c}_t(\theta) $ is given by  \eqref{eq:defY} and corresponds to the variation of constants formula of \eqref{eq:yalpha}. 	The claimed continuity statement  together  with \eqref{eq:estimateY4}-\eqref{eq:estimateY4.2} are proved in Lemma~\ref{L:contuinuity Y}. The converse is proved along the exact same lines  by working them backward. 
\end{proof}

{The following remark justifies our choice for carrying the analysis in the space  $L^1(\mu)$.  
	\begin{remark}
		Inspecting the coefficients in \eqref{eq:yalpha}, one observes that they are well-defined if $Y^{\c}_t \in L^1(\mu)$ for all $t\leq T$. One might be tempted to look for solutions in the Hilbert space $L^2(\mu)$, but since $\mu$ is not always a finite measure (as in \eqref{eq:mufrac} for instance), $L^2(\mu)$ is not necessarily included in $L^1(\mu)$. 
\end{remark}}

\begin{remark}
	An alternative lift approach, in the spirit of \cite{AJEE18b,EER:07,han2019time, jacomg19, viezha18},  consists in  introducing the double-indexed (controlled) processes
	\begin{align*}
	G_t^\alpha(u) &= \E \left[ X_u^\alpha - \int_t^u K(u - s) b(s,X_s^\alpha,\alpha_s) ds \Mid  {\cal F}_t \right], \;\;\; 0 \leq t \leq u \leq T.
	\end{align*}
	The control problem can then be reformulated in terms of the infinite dimensional controlled Markov process ${\{G^\alpha_t(.), t \in [0,T]\}}$ with 
	It\^o dynamics
	\begin{align*}
	dG_t^\alpha(u) &= K(u-t) \left( b(t,X_t^\alpha,\alpha_t) dt + \sigma(t,X_t^\alpha,\alpha_t) dW_t \right), \;\;\; 0 \leq  t < u  \leq T.
	\end{align*}
	\ep
\end{remark}

\subsection{Formal derivation of the solution}\label{S:formal}
Thanks to  Theorem~\ref{prop:representation_of_X}, the   possibly non-Markovian initial problem can be formally recast as a degenerate infinite dimensional Markovian problem in $L^1(\mu)$ on the state variables $Y^{\alpha}$ given by \eqref{eq:yalpha}. To see this, we define the mean-reverting operator $A^{mr}$ acting on measurable functions $\varphi \in L^{1}(\mu)$ by
\begin{align} \label{meanoperator}
(A^{mr}\varphi)(\theta) &=-\theta \varphi(\theta), \quad \theta \in \R_+,
\end{align}
and consider  the dual pairing
\begin{eqnarray*} 
	\langle \varphi,  \psi \rangle_{\mu} &=& \int_{\R_+} \varphi(\theta)^\top \mu(d\theta)^\top \psi(\theta), 
	\quad  (\varphi,\psi)\in L^1(\mu)\times L^{\infty}(\mu^\top).   
\end{eqnarray*}
For any matrix--valued kernel $G$, we denote by $\boldsymbol{G}$ the integral operator induced by $G$, defined by: 
\begin{align*}
(\boldsymbol{G}\phi)(\theta)= \int_{\R_+} G(\theta,\theta')\mu(d\theta')\phi(\theta').
\end{align*}
Notice that when  $G$ $\in$ $L^\infty(\mu\otimes\mu)$, the operator  $\boldsymbol{G}$ is well-defined on $L^1(\mu)$, and we have $\boldsymbol{G}\phi$ $\in$ 
$L^\infty(\mu^\top)$ for  $\phi$ $\in$ $L^1(\mu)$. In this case, $\langle \phi, \boldsymbol{G}\psi \rangle_{\mu}$ is well defined for all $\varphi,\psi \in L^1(\mu)$.
When  $G$ $\in$ $L^1(\mu\otimes\mu)$, the operator $\boldsymbol{G}$ is well-defined on $L^\infty(\mu)$,  
and we have  $\boldsymbol{G}\phi$ $\in$ $L^1(\mu^\top)$, for $\phi$ $\in$ $L^\infty(\mu)$. 
In this case $\langle \boldsymbol{G}\phi ,\psi  \rangle_{\mu^\top}$ is well defined for all $\varphi,\psi \in L^\infty(\mu)$.

To fix ideas we set  $g_0=\beta=\gamma\equiv 0 $ and $L=0$. Noting that relation \eqref{eq:Yintegmu} is the mild form of the 
linear controlled dynamics in $L^1(\mu)$,
\begin{align*}
dY_t^{\alpha} &= \left( A^{mr}Y^{\alpha}_t +  \boldsymbol{B}Y^{\alpha}_t +  C\alpha_t \right) dt + \left( \boldsymbol{D}Y^{\alpha}_t +  F\alpha_t \right)dW_t, \quad Y_0^{\alpha} =0,
\end{align*}
we see that the optimization problem \eqref{eq:main problem}  can be reformulated as a Markovian problem in $L^1(\mu)$ with cost functional,  
\begin{align} \label{JL1mu}
J(\alpha)  &= \E\left[\int_0^T \left( \langle Y^{\alpha}_s , \boldsymbol{Q}Y^{\alpha}_s\rangle_{\mu}  + \alpha_s^\top N \alpha_s   \right)ds\right], \;\;\; 
\end{align}
where, by a slight abuse of notations,  $C$ and $F$ denote the respective constant operators from $\R^m$ into $L^\infty(\mu)$ induced by the matrices $C$ and $F$:
$$ (C a)(\theta)= C a, \quad (F a) (\theta)= F a, \quad \theta  \in \R_+,\quad a  \in \R^m. $$
Their adjoint operators $C^*,F^*$ from $L^1(\mu^\top)$ into $\R^m$  take the form 
$$   C^* g  \; = \; C^\top \int_{\R_+} \mu(d\theta)^\top g(\theta), \quad F^* g \; = \; F^\top \int_{\R_+} \mu(d\theta)^\top g(\theta), \quad   g \in L^1(\mu^\top). $$
Given the linear--quadratic structure of the problem, standard results in finite-dimensional stochastic control theory, see   \cite[Chapter 6]{yong1999stochastic},  as well as in Hilbert spaces, see \cite{flandoli1986direct, hu2018stochastic},   suggest that the optimal value process $V^{\alpha}$ associated to the functional \eqref{JL1mu}  should be of  linear--quadratic form 
$$ V_t^{\alpha^*} = \langle Y_t^{\alpha^*}, \boldsymbol{\Gamma}_{t} Y^{\alpha^*}_t \rangle_{\mu},  $$
with an   optimal feedback control $\alpha^*$ satisfying
\begin{align*}
\alpha^*_t = - \left( {N} + {F}^*\boldsymbol{\Gamma}_{t} {F} \right)^{-1} \left({C}^* \boldsymbol{\Gamma}_{t}  +  {F}^* \boldsymbol{\Gamma}_{t} \boldsymbol{D} \right) Y^{\alpha^*}_t, \quad 0 \leq t \leq T, 
\end{align*}
where $\boldsymbol{\Gamma}_{t}$ is  a symmetric operator from $L^1(\mu)$ into $L^\infty(\mu^\top)$,  and  solves the operator Riccati equation:
\begin{equation} \label{Riccatiformal}
\left\{
\begin{array}{ccl}
\boldsymbol{\Gamma}_{T} &=& \boldsymbol{0} \\
\dot    {\boldsymbol{\Gamma}}_{t}&=& - \boldsymbol{\Gamma}_{t}A^{mr} - \left(\boldsymbol{\Gamma}_{t}A^{mr}\right)^{*}-  \boldsymbol{Q} - \boldsymbol{D}^* \boldsymbol{\Gamma}_{t} \boldsymbol{D}   - \boldsymbol{B}^* \boldsymbol{\Gamma}_{t}  - \left(\boldsymbol{B^*}\boldsymbol{\Gamma}_{t}\right)^*  \\
& &  \quad + \;  \left({C}^* \boldsymbol{\Gamma}_{t} + {F}^* \boldsymbol{\Gamma}_{t} \boldsymbol{D} \right)^*\left( {N} + {F}^*\boldsymbol{\Gamma}_{t} {F} \right)^{-1} 
\left({C}^* \boldsymbol{\Gamma}_{t}  +{F}^* \boldsymbol{\Gamma}_{t} \boldsymbol{D} \right), \quad t \in [0,T].    
\end{array}
\right.
\end{equation}
In particular, when  $\boldsymbol{\Gamma}$ is an integral operator, this formally induces the following Riccati equation for the associated (symmetric)  kernel $\Gamma$ valued in 
$L^1(\mu\otimes\mu)$: 
\begin{equation} \label{formalkernelRiccati}
\left\{
\begin{array}{ccl}
{\Gamma}_{T}(\theta,\tau) &=&  {0}  \\
\dot    {{\Gamma}}_{t}(\theta,\tau) &=&  (\theta+\tau)\Gamma_{t}(\theta,\tau)  - Q   - D^\top \int_{\R_+^2} \mu(d\theta')^\top \Gamma_{t}(\theta',\tau')\mu(d\tau') D  \\
&  & \quad - \;  {B}^\top \int_{\R_+} \mu(d\theta')^\top \Gamma_{t}(\theta',\tau)  -   \int_{\R_+}  \Gamma_{t}(\theta,\tau') \mu(d\tau')B  +  \;  S_{t}(\theta)^\T \hat{N}_{t}^{-1} S_{t}(\tau),  
\end{array}
\right.
\end{equation}
where
\begin{align*}
S_{t}(\tau)  &= C^\T \ints \mu(d\theta)^\top \Gamma_{t}(\theta,\tau) + F^\top \int_{\R_+^2}  \mu(d\theta')^\top \Gamma_{t}(\theta',\tau')  \mu(d\tau') D   \\
\hat{N}_{t}  &= N + F^\T \int_{\R_+^2} \mu(d\theta)^\T \Gamma_{t}(\theta,\tau)   \mu(d\tau)  F,
\end{align*} 
and provides an  optimal control in the form
\begin{align*}
\alpha^*_{t} &= - \hat{N}_{t}^{-1} \int_{\R_+} S_{t}(\theta) \mu(d\theta)Y^{\alpha^*}_t(\theta), \quad 0 \leq t \leq T.
\end{align*}

Although the aforementioned infinite dimensional results provide formal expressions for the solution of the problem, they cannot be  directly applied, since they concern Hilbert spaces. Here the infinite dimensional controlled process $Y^{\c}$ takes its values in the  {non reflexive}  Banach space $\left(L^1(\mu),\|\cdot\|_{L^1(\mu)}\right)$. 
The rigorous derivation of the solution is the first main objective of the present paper. Our second goal is to show how to obtain an analytic finite-dimensional approximation of the original control problem after a suitable discretization of the operator Riccati equation.

\section{Main results} \label{secmain} 

We collect in this section our main results. 

\subsection{Solvability: optimal control and value function}

Let $\alpha \in \Acal$. Given the linear-quadratic structure of the problem and the formal analysis of Section \ref{S:formal}, it is natural to consider a  candidate optimal value process 
$(V_t^{\c})_{t \leq T}$ of linear-quadratic form in the state variable $Y^{\alpha}$ given by \eqref{eq:Yintegmu}, that is 
\begin{align}
V_t^{\alpha} &= \int_{\R_+^2} Y^{\alpha}_t(\theta)^\top \mu(d\theta)^\top \Gamma_{t}(\theta,\tau) \mu(d\tau)Y_t^{\alpha}(\tau)   +   
2\int_{\R_+} \Lambda_{t}(\theta)^\top    \mu(d\theta) Y_t^{\alpha}(\theta) + \chi_{t}, \label{eq:valuefunY} 
\end{align}
where the functions $t\mapsto \Gamma_{t}, \Lambda_{t}, \chi_{t}$ are  solutions, in a suitable sense, of the following system of Riccati equations:
\begin{equation} \label{eq:Riccatis_monotone}
\left\{
\begin{array}{rclc}
\dot{\Gamma}_{t}(\theta,\tau)  &=&  (\theta+\tau)\Gamma_{t}(\theta,\tau) -  \mathcal {R}_1 (\Gamma_{t})(\theta,\tau),  & \quad  \Gamma_{T}(\theta, \tau) = 0  \\
\dot \Lambda_{t}(\theta) &=& \theta \Lambda_{t}(\theta)  - \mathcal {R}_2 (t, \Gamma_{t}, \Lambda_{t})(\theta), &  \quad  \Lambda_{T}(\theta) =0\\
\dot \chi_{t} &=&     \; -  \mathcal {R}_3 (t, \Gamma_{t}, \Lambda_{t}), & \quad  \chi_{T} =0,
\end{array}
\right.
\end{equation}
where we defined
\begin{align}
\mathcal {R}_1(\Gamma) (\theta,\tau)&= \; Q +D^\top \int_{\R_+^2} \mu(d\theta')^\top \Gamma(\theta',\tau') \mu(d\tau') D
+ B^\top \int_{\R_+} \mu(d\theta')^\top \Gamma(\theta', \tau) \\ 
&\quad\quad  + \;  \ints \Gamma(\theta, \tau') \mu(d \tau') B   - S(\Gamma)(\theta)^\top \hat{N}^{-1}(\Gamma) S(\Gamma)(\tau)  \label{eq:R1}\\
\mathcal {R}_2 (t, \Gamma,  \Lambda )(\theta)  &= \;  L + Q g_0(t) + B^\top\ints \mu(d \theta')^\T \Lambda(\theta') + \ints \Gamma(\theta,\tau')\mu(d \tau') \tilde{\beta}(t)  \\
& \;   + \;  D^\T \int_{\R_+^2} \mu(d\theta')^\T \Gamma(\theta',\tau') \mu(d\tau') \tilde{\gamma}(t)   - S(\Gamma)(\theta)^\T \hat{N}(\Gamma)^{-1}  h(t,\Gamma, \Lambda) \label{eq:R2} \\
\mathcal {R}_3 (t, \Gamma,  \Lambda ) &= \;   g_0(t)^\T Q g_0(t)  + { 2L^\T g_0(t)} + \tilde{\gamma}(t)^\T \int_{\R_+^2} \mu(d\theta')^\T \Gamma(\theta',\tau') \mu(d\tau')\tilde{\gamma}(t)  \\
& \quad\quad  +   \; {2} \tilde{\beta}(t)^\T \ints \mu(d\theta')^\T \Lambda(\theta') - h(t,\Gamma, \Lambda)  \hat{N}(\Gamma)^{-1} h(t,\Gamma, \Lambda), \label{eq:R3}
\end{align}
with 
\bes{
	S(\Gamma)(\tau)  & = \;   C^\T \ints \mu(d\theta)^\top \Gamma(\theta,\tau)  + F^\top \int_{\R_+^2}  \mu(d\theta' )^\top
	\Gamma(\theta',\tau')  \mu(d\tau') D \\
	\hat{N}(\Gamma) & = \;  N + F^\T \int_{\R_+^2} \mu(d\theta)^\T \Gamma(\theta,\tau)   \mu(d\tau)  F \label{eq:hatN}\\
	h(t, \Gamma, \Lambda) & = \;  C^\T  \ints \mu(d\theta)^\T \Lambda(\theta)  +  F^\T \int_{\R_+^2} \mu(d\theta)^\T  \Gamma(\theta,\tau)    \mu(d\tau)\tilde{\gamma}(t).
}

The two following definitions specify the concept of  solution to  the system \eqref{eq:Riccatis_monotone}. 

\begin{definition}\label{D:nonnegative}
	Let $\Gamma:\R_+^2\to \R^{d\times d}$ such  that  $\Gamma \in L^\infty(\mu \otimes \mu)$.  We say that $\Gamma$ is symmetric if
	\begin{align*}
	\Gamma(\theta,\tau) &= \; \Gamma(\tau,\theta)^\top, \quad \mu\otimes \mu-a.e.
	\end{align*}  
	and  nonnegative if 
	\begin{align*}
	\int_{\R_+^2} \varphi(\theta)^\top \mu(d\theta)^\top	\Gamma(\theta,\tau) \mu(d\tau) \varphi(\tau) &\geq \; 0, \quad \mbox{ for all }    \varphi \in  L^{1}(\mu). 
	\end{align*} 
	We denote by $\S_+^d(\mu \otimes \mu)$ the set of all  symmetric and nonnegative  $\Gamma \in L^\infty (\mu \otimes \mu)$. 
\end{definition}


\begin{remark}
	The integral operator $\boldsymbol{\Gamma}$ associated to a symmetric kernel {$\Gamma $} $\in L^\infty(\mu \otimes \mu)$ 
	is  symmetric, in the sense that 
	\begin{align*}
	\langle \varphi,  \boldsymbol{\Gamma} \psi \rangle_{\mu}, &= \; \langle    \psi, \boldsymbol{\Gamma} \varphi\rangle_{\mu},  
	\quad  \varphi, \psi \in L^1(\mu).   
	\end{align*}
	Moreover, the nonnegativity of $\Gamma$ translates into 
	\begin{align*}
	\langle  \varphi,  \boldsymbol{\Gamma} \varphi \rangle_{\mu} & \geq \; 0, \quad  \varphi \in L^{1}(\mu). 
	\end{align*}
	\ep
\end{remark}


\begin{definition}
	By a solution to the system \eqref{eq:Riccatis_monotone}, we mean a triplet 
	$(\Gamma,\Lambda,\chi) \in C([0,T],L^1(\mu\otimes \mu))\times C([0,T],L^1( \mu^\top)) \times C([0,T],\R) $ such that  
	\begin{align}
	\Gamma_{t}(\theta,\tau)  &= \;   \int_t^T e^{-(\theta+\tau)(s-t)}     \mathcal {R}_1 (\Gamma_{s})(\theta,\tau) d  s,  &&  0\leq t\leq T, \quad  \mu\otimes \mu-a.e.
	\label{eq:Riccati_monotone_kernel_mild}\\
	\Lambda_{t}(\theta) &= \;   \int_t^T e^{-\theta(s-t)} \mathcal {R}_2 (s, \Gamma_{s},  \Lambda_{s} )(\theta) d  s, && 0\leq t\leq T, \quad    \mu-a.e.
	\label{eq:BSDE_monotone_kernel_mild}\\
	\chi_{t} &= \; \int_t^T \mathcal {R}_3 (s, \Gamma_{s},\Lambda_{s}) ds,  &&   0 \leq  t\leq T, 
	\label{eq:ODE_monotone_kernel_mild} 
	\end{align}
	where $\mathcal R_1$, $\mathcal R_2$ and $\mathcal R_3$ are defined  respectively by \eqref{eq:R1}, \eqref{eq:R2} and \eqref{eq:R3}.  
	In particular  $\hat N(\Gamma_{t})$ given by \eqref{eq:hatN} is  invertible for all $t\leq T$.
\end{definition}

\vspace{1mm}

The existence and uniqueness of a solution to the Riccati system follows from \cite{abietal19b}, and is stated in the next theorem. {The results in \cite{abietal19b} provide a solution to the equation for $\Gamma$, and we show how to derive the solution  for $\Lambda$ from \cite{abietal19b} in   Section~\ref{S:proof}.} 

\begin{theorem}\label{T:Riccatiexistence}
	Let $g_0,\beta,\gamma$ be bounded functions on $[0,T]$. Assume that $\mu$ satisfies \eqref{eq:totalvar} and that 
	\begin{align}
	\label{assumption:QN}
	Q \in \S^d_+ , \quad  N-\lambda I_m \in  \S^m_+,
	\end{align}
	for some $\lambda>0$. Then, there exists a {unique} triplet $(\Gamma,\Lambda,\chi) \in C([0,T],L^1(\mu \otimes \mu))\times C([0,T],L^1( \mu^\top)) \times C([0,T],\R)$ 
	to the system of Riccati equation \eqref{eq:Riccatis_monotone}
	such that \eqref{eq:Riccati_monotone_kernel_mild}, \eqref{eq:BSDE_monotone_kernel_mild}, \eqref{eq:ODE_monotone_kernel_mild}   hold  
	and $\Gamma_t \in \S^d_+(\mu \otimes \mu)$, for all $t \leq T$. 
	Furthermore,   there exists some positive constant $M>0$ such that  
	\begin{align}
	\label{eq:estimateRiccati}
	\int_{\R_+} |\mu|(d\tau) |\Gamma_{t}(\theta,\tau)| & \leq \;  M, \quad {\mbox{for $\mu$-almost every $\theta$}}, \quad 0 \leq t\leq T.
	\end{align}
\end{theorem}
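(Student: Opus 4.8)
The plan is to treat the three equations of \eqref{eq:Riccatis_monotone} sequentially, exploiting that only the $\Gamma$-equation is genuinely nonlinear, while the $\Lambda$- and $\chi$-equations are, respectively, affine and explicit once $\Gamma$ is known. For the first component I would simply invoke \cite{abietal19b}: under \eqref{eq:totalvar} and \eqref{assumption:QN} that reference yields a unique $\Gamma \in C([0,T],L^1(\mu\otimes\mu))$ solving \eqref{eq:Riccati_monotone_kernel_mild} with $\Gamma_t \in \S^d_+(\mu\otimes\mu)$ and the uniform bound \eqref{eq:estimateRiccati}. In particular each $\Gamma_t$ lies in $L^1(\mu\otimes\mu)\cap L^\infty(\mu\otimes\mu)$, so every double integral $\int_{\R_+^2}\mu(d\theta)^\top\Gamma_t(\theta,\tau)\mu(d\tau)$ is finite; moreover, approximating constants by $\varphi_n = v\,\ind_{[0,n]}\in L^1(\mu)$ (legitimate since \eqref{eq:totalvar} makes $|\mu|$ finite on bounded sets) and passing to the limit shows this double integral is a nonnegative matrix. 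Consequently $\hat N(\Gamma_t) = N + F^\top(\,\cdot\,)F \succeq N \succeq \lambda I_m$, so $\hat N(\Gamma_t)^{-1}$ exists with $\|\hat N(\Gamma_t)^{-1}\|\le\lambda^{-1}$ uniformly in $t$; together with \eqref{eq:estimateRiccati} and symmetry of $\Gamma_t$, this also bounds $S(\Gamma_t)(\cdot)$ uniformly in its argument.

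With $\Gamma$ fixed, I would solve \eqref{eq:BSDE_monotone_kernel_mild} as a fixed point of the affine map $\Phi$ on $C([0,T],L^1(\mu^\top))$ given by $(\Phi\Lambda)_t(\theta) = \int_t^T e^{-\theta(s-t)}\mathcal{R}_2(s,\Gamma_s,\Lambda_s)(\theta)\,ds$. First I would check that $\Phi$ maps the space into itself. Inspecting \eqref{eq:R2}, the $\theta$-dependence of $\mathcal{R}_2(s,\Gamma_s,\Lambda_s)(\theta)$ enters only through $\int_{\R_+}\Gamma_s(\theta,\tau')\mu(d\tau')$ and $S(\Gamma_s)(\theta)$, both bounded uniformly in $\theta$ by \eqref{eq:estimateRiccati}; using boundedness of $g_0,\tilde{\beta},\tilde{\gamma}$, of $\hat N^{-1}$, and the fact that $\Lambda_s\mapsto\int_{\R_+}\mu^\top\Lambda_s$ is a bounded functional on $L^1(\mu^\top)$, one gets $\sup_{s,\theta}|\mathcal{R}_2(s,\Gamma_s,\Lambda_s)(\theta)|<\infty$. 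Integrability against $|\mu|$ then comes from the exponential weight: since $\int_t^T e^{-\theta(s-t)}ds \le (1\vee T)(1\wedge\theta^{-1})$ and $1\wedge\theta^{-1}\le 1\wedge\theta^{-1/2}$, condition \eqref{eq:totalvar} yields $\sup_t\|(\Phi\Lambda)_t\|_{L^1(\mu^\top)}<\infty$, with continuity in $t$ following by dominated convergence.

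For existence and uniqueness I would exploit that $\Phi$ is affine, $\Phi\Lambda = \Phi 0 + \Psi\Lambda$, with linear part $\Psi$ acting through the single finite-dimensional functional $\int_{\R_+}\mu^\top(\cdot)$,
\[
(\Psi\Lambda)_t(\theta) = \int_t^T e^{-\theta(s-t)}\Big(B^\top - S(\Gamma_s)(\theta)^\top\hat N(\Gamma_s)^{-1}C^\top\Big)\int_{\R_+}\mu(d\theta')^\top\Lambda_s(\theta')\,ds.
\]
Bounding the bracket by a constant $\kappa$ uniform in $(s,\theta)$ (via $\|\hat N^{-1}\|\le\lambda^{-1}$ and the uniform bound on $S$) and integrating against $|\mu|$ gives, by Fubini,
\[
\|(\Psi\Lambda)_t\|_{L^1(\mu^\top)} \le \kappa\int_t^T \Big(\int_{\R_+}e^{-\theta(s-t)}|\mu|(d\theta)\Big)\|\Lambda_s\|_{L^1(\mu^\top)}\,ds,
\]
and splitting the inner integral at $\theta=1$ and using $e^{-x}\le c_0 x^{-1/2}$ together with $\int_1^\infty\theta^{-1/2}|\mu|(d\theta)<\infty$ bounds it by $C(1+(s-t)^{-1/2})$. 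Thus $\Psi$ is a linear Volterra operator with a weakly singular, integrable kernel, whose iterates satisfy $\|\Psi^n\|\to 0$ summably, since the $n$-fold convolution of $s\mapsto s^{-1/2}$ carries a factorially decaying constant (a product of Beta-function factors). Hence $(\mathrm{Id}-\Psi)^{-1}=\sum_{n\ge0}\Psi^n$ is bounded and $\Lambda:=(\mathrm{Id}-\Psi)^{-1}\Phi 0$ is the unique fixed point in $C([0,T],L^1(\mu^\top))$. Finally $\chi$ is explicit: setting $\chi_t=\int_t^T\mathcal{R}_3(s,\Gamma_s,\Lambda_s)\,ds$ as in \eqref{eq:ODE_monotone_kernel_mild}, the integrand in \eqref{eq:R3} involves only the finite double integrals of $\Gamma_s$, the bounded functional $\int_{\R_+}\mu^\top\Lambda_s$, $\hat N^{-1}$, and the bounded data, so it is bounded and continuous in $s$ and $\chi\in C([0,T],\R)$, completing the triplet. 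The main obstacle I anticipate is the well-posedness of the $\Lambda$-equation in the non-reflexive space $L^1(\mu)$ with a possibly infinite measure $\mu$: the exponential kernel together with \eqref{eq:totalvar} is exactly what rescues integrability against $|\mu|$, but it produces the weakly singular $(s-t)^{-1/2}$ Volterra kernel that precludes a one-step contraction and forces the iterated-kernel argument above.
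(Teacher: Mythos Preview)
Your proof is correct, and for $\Gamma$ and $\chi$ it proceeds exactly as the paper does. The difference lies in the treatment of the $\Lambda$-equation. The paper recognises \eqref{eq:BSDE_monotone_kernel_mild} as an instance of the general linear Lyapunov equation \eqref{def:infinite_dim_lyapunov1}--\eqref{def:infinite_dim_lyapunov2} with $\mu_1=\mu$, $\mu_2=0$ and time-dependent coefficients $\tilde Q,\tilde B^1$ built from the already constructed $\Gamma$, and then simply invokes \cite[Theorem~3.1]{abietal19b} for existence and uniqueness in $C([0,T],L^1(\mu^\top))$. You instead give a self-contained argument: you isolate the affine structure $\Phi\Lambda=\Phi 0+\Psi\Lambda$, observe that the $\theta$-dependence of $\mathcal R_2$ is controlled by \eqref{eq:estimateRiccati}, bound the resulting linear Volterra operator $\Psi$ via the weakly singular kernel $\bar K(s-t)=\int_{\R_+}e^{-\theta(s-t)}|\mu|(d\theta)\leq C(1+(s-t)^{-1/2})$, and sum the Neumann series using the factorial decay of iterated Beta integrals. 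This is precisely the kind of estimate that underlies \cite[Theorem~3.1]{abietal19b}, so the two arguments are morally the same; the paper's route is more modular (it reuses a result already needed for $\Gamma$), while yours is more transparent about the mechanism and makes explicit where \eqref{eq:totalvar} enters to tame the possibly infinite measure $\mu$.
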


\begin{remark}\label{R:Linfty} 
	Since $\Gamma_t \in \S^d_+(\mu \otimes \mu)$, we have $\Gamma_t \in L^1(\mu\otimes\mu)\cap L^{\infty}(\mu\otimes \mu)$, for all $t\leq T$. Similarly, $\Lambda_t \in L^1(\mu^\top) \cap L^{\infty}(\mu^\top)$. To see this, it suffices to observe that since $\Lambda \in C([0,T],L^1( \mu^\top))$, it is bounded in $ L^1(\mu^\top)$. Combined with the boundedness of $\Gamma$  in $L^1(\mu\otimes\mu)$, the estimate \eqref{eq:estimateRiccati} and the boundedness of the coefficients, we obtain
	\begin{align} 
	|\mathcal R_1(\Gamma_t)(\theta,\tau)|  +  |\mathcal R_2(t,\Gamma_t,\Lambda_t)(\theta)|   \leq  c, \quad \mu\otimes \mu-a.e., \quad t\leq T, 
	\end{align}
	for some constant $c$. Finally, from \eqref{eq:BSDE_monotone_kernel_mild}, we get that $\Lambda_t \in L^{\infty}(\mu^\top)$, for all $t\leq T$. 
	\qed
\end{remark}

\begin{remark}
	\label{R:continuityV}
	The process $V^{\alpha}$ given by \eqref{eq:valuefunY} is well-defined and continuous, due to the continuity of $(\Gamma,\Lambda,\chi)$, that of $Y^{\alpha}$ from Theorem~\ref{prop:representation_of_X} together with the bounds \eqref{eq:estimateY4.2}  and Remark \ref{R:Linfty}. 
	\ep
\end{remark}

\vspace{1mm}

Our first main result  addresses the  solvability of the problem \eqref{eq:main problem}. Theorem~\ref{T:mainoptimalsolution} esta\-blishes the existence of an optimal feedback control of linear form and provides an explicit  expression for the value function in terms of the solution to the Riccati equation. 
The proof is collected in Section~\ref{S:proof} and builds upon  the results developed in Sections \ref{S:sde} and  \ref{S:verification}.

\begin{theorem}\label{T:mainoptimalsolution}
	Let $\beta,\gamma$ be bounded functions on $[0,T]$ and $g_0$ be continuous. Fix  $K,\mu$ as in  \eqref{eq:cmmu}-\eqref{eq:totalvar}--\eqref{eq:mucont}. Under \eqref{assumption:QN}, let  
	$(\Gamma, \Lambda, \chi)$  be the solution to the system of Riccati equation \eqref{eq:Riccatis_monotone} produced by  Theorem~\ref{T:Riccatiexistence}. 
	Then, there exists an admissible control $\alpha^* \in \Acal$  with  corresponding controlled trajectory $Y^{\alpha^*}$ as in \eqref{eq:Yintegmu} such that 
	\begin{align}	\label{eq:optimalcontrol_monotone}
	\c^*_t &= \;  {-\hat{N}(\Gamma_{t})^{-1} \Big( h(t,\Gamma_{t}, \Lambda_{t}) + \int_{\R_+} S(\Gamma_{t})(\theta) \mu(d\theta) Y^{\alpha^*}_t(\theta)  \Big)}
	\end{align}
	for all $t \leq T$. Furthermore, $\alpha^*$ is an admissible optimal control, in the sense that 
	$$ \inf_{\alpha \in \mathcal A}  J(\alpha)=J(\alpha^*),$$
	$Y^{\alpha^*}$ is  the optimally controlled trajectory of the state variable and $V_t^{\alpha^*}$ given by \eqref{eq:valuefunY} is the optimal  value process of the problem, that is  
	\begin{align}	\label{eq:optimalvalue_monotone}
	V_t^{\alpha^*} &= \;  \inf_{\alpha \in \Acal_t(\alpha^*)} \E\left[\intt  f(X^{\alpha}_s,\alpha_s) ds  \Mid \cali{F}_t\right], \quad 0 \leq t \leq T, 
	\end{align}
	where $\Acal_t(\alpha)$ $=$ $\{ \alpha' \in \Acal: \alpha_s' = \alpha_s, \; s \leq t\}$. 
\end{theorem}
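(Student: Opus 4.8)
The plan is to follow the route suggested by the section layout of the paper: first construct the candidate optimal control $\alpha^*$ by solving the closed-loop equation, and then run a martingale verification argument on the quadratic candidate value process $V^\alpha$ of \eqref{eq:valuefunY}, in which a square-completion identity isolates a nonnegative term that vanishes exactly along the feedback control.

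First I would establish existence of $\alpha^*$. Substituting the feedback expression \eqref{eq:optimalcontrol_monotone} into the $Y$-dynamics \eqref{eq:yalpha} produces a closed-loop equation in $L^1(\mu)$ whose coefficients are affine — hence Lipschitz and of linear growth — in the state $Y$, with bounded, $t$-measurable data. Indeed, by Theorem~\ref{T:Riccatiexistence} and Remark~\ref{R:Linfty} the maps $t\mapsto\Gamma_t$ and $t\mapsto\Lambda_t$ are bounded and satisfy \eqref{eq:estimateRiccati}, while \eqref{assumption:QN} forces $\hat N(\Gamma_t)\succeq N\succeq\lambda I_m\succ0$, so $\hat N(\Gamma_t)^{-1}$ is well defined and uniformly bounded, and $Y\mapsto\int_{\R_+}S(\Gamma_t)(\theta)\mu(d\theta)Y(\theta)$ is a bounded map $L^1(\mu)\to\R^m$ by \eqref{eq:estimateRiccati}. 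The Banach-space SDE existence result of Section~\ref{S:sde} then yields a unique continuous $L^1(\mu)$-valued trajectory $Y^{\alpha^*}$ satisfying the fourth-moment bound \eqref{eq:estimateY4}, and defining $\alpha^*$ by \eqref{eq:optimalcontrol_monotone} along this trajectory gives $\alpha^*\in\Acal$.

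Next I would invoke the martingale verification theorem of Section~\ref{S:verification}. For an arbitrary $\alpha\in\Acal$ with trajectory $Y^\alpha$, applying the quadratic-functional It\^o formula established there to $V^\alpha$ and inserting the Riccati system \eqref{eq:Riccatis_monotone} gives, after completing the square in $\alpha_s$, the identity
\begin{align*}
\E\left[\intt f(X^\alpha_s,\alpha_s)\,ds \Mid \Fc_t\right] = V^\alpha_t + \E\left[\intt (\alpha_s-\bar\alpha_s)^\top \hat N(\Gamma_s)(\alpha_s-\bar\alpha_s)\,ds \Mid \Fc_t\right],
\end{align*}
where $\bar\alpha_s=-\hat N(\Gamma_s)^{-1}\big(h(s,\Gamma_s,\Lambda_s)+\int_{\R_+}S(\Gamma_s)(\theta)\mu(d\theta)Y^\alpha_s(\theta)\big)$ is the feedback map evaluated along the current trajectory. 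The $(\theta+\tau)\Gamma_t$ and $\theta\Lambda_t$ terms in \eqref{eq:Riccatis_monotone} are precisely what absorbs the unbounded mean-reverting drift $A^{mr}$ in the It\^o expansion, and $\mathcal R_1,\mathcal R_2,\mathcal R_3$ from \eqref{eq:R1}–\eqref{eq:R3} are exactly the coefficients thrown off by the square completion. The terminal condition $\Gamma_T=\Lambda_T=\chi_T=0$ gives $V^\alpha_T=0$, while the fourth-moment bounds \eqref{eq:estimateY4} and the admissibility $\sup_t\E[|\alpha_t|^4]<\infty$ make the stochastic-integral part a genuine martingale with vanishing conditional expectation. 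Since $\hat N(\Gamma_s)\succeq\lambda I_m\succ0$, the last term is nonnegative, so $\E[\intt f\,ds\mid\Fc_t]\geq V^\alpha_t$. Restricting to $\alpha\in\Acal_t(\alpha^*)$ we have $Y^\alpha_t=Y^{\alpha^*}_t$, hence $V^\alpha_t=V^{\alpha^*}_t$; taking $\alpha=\alpha^*$ makes $\alpha_s=\bar\alpha_s$, the nonnegative term vanishes, and equality holds. This yields \eqref{eq:optimalvalue_monotone}, and specializing to $t=0$ (where $Y^\alpha_0=0$ and $V^\alpha_0=\chi_0$ for every $\alpha$) gives $V_0=\chi_0=J(\alpha^*)=\inf_{\alpha\in\Acal}J(\alpha)$, proving optimality.

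The hard part will be the It\^o/verification step in the non-reflexive Banach space $L^1(\mu)$: since $A^{mr}$ is unbounded and $L^1(\mu)$ is not a Hilbert space, standard infinite-dimensional stochastic calculus does not apply, and one must justify the quadratic It\^o expansion by working directly with the explicit mild representation \eqref{eq:defY} of $Y^\alpha$ and the $\mu$-integrated Riccati kernels, controlling every interchange of integration through \eqref{eq:totalvar}, \eqref{eq:estimateRiccati} and the fourth-moment estimates. Ensuring the delicate cancellation of the unbounded $(\theta+\tau)$-weighted terms against the mean-reversion drift while preserving the integrability needed for the martingale property is where the bulk of the care lies.
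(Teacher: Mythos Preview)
Your proposal is correct and follows essentially the same route as the paper: construct $\alpha^*$ by solving the closed-loop $L^1(\mu)$-valued SDE via the existence result of Section~\ref{S:sde} (checking the Lipschitz/growth bounds through \eqref{eq:estimateRiccati} and the boundedness of $\hat N(\Gamma_t)^{-1}$), then apply the martingale verification Theorem~\ref{thm:verification}, whose proof already packages the quadratic It\^o formula, the square-completion identity, and the true-martingale argument exactly as you describe. Your identification of the delicate point---justifying the It\^o expansion in the non-reflexive Banach space $L^1(\mu)$ and the cancellation of the $(\theta+\tau)$-terms against $A^{mr}$---matches the content of Lemma~\ref{L:ito} and the proof of Theorem~\ref{thm:verification}.
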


\begin{remark}
	From \eqref{eq:optimalvalue_monotone}, it follows that at initial time $t$ $=$ $0$, the optimal value $V_0$ is equal to $V_0$ $=$ $V_0^{\alpha^*}$ $=$ $\chi_0$, hence
	\begin{align*}
	V_0 & = \; \int_0^T  \mathcal {R}_3 (t, \Gamma_{t},\Lambda_{t}) dt.
	\end{align*}
	In particular,  for  a constant initial condition  $g_0(t)$ $\equiv$ $X_0$ for some $X_0$ $\in$  $\R^d$,  we have 
	\begin{align*}
	V_0 &= \;  X_0^\top \Psi(T) X_0 + \Phi(T) X_0  +  \xi(T),   
	\end{align*}
	for suitable functions  $\Psi,\Phi,\xi$, which corresponds to the usual linear--quadratic form  in $X_0$. However, because of the possible  non-Markovianity of the problem, for $t>0$, the optimal value $V_t^{\alpha^*}$  is not necessarily  linear--quadratic in $X^{\alpha^*}_t$ as in the standard case.  
	\ep
\end{remark}

\vspace{1mm}

The following example treats the LQ regulator problem \eqref{dynLQclassi} with a general Volterra noise. 
\begin{example}  Let us consider a controlled equation with Volterra noise 
	\begin{align*}
	X_t^\alpha  &=  \int_0^t \c_s d s  +  \int_0^t \tilde K(t-s) d W_s ,
	\end{align*}
	\bes{
		J(\c) = \E \entrecro{\int_0^T \big( QX_s^2 + N\c_s^2 \big) d s},
	}
	where $\tilde K(t) = \ints e^{-\theta t}\tilde\mu(d\theta)$. Notice that $X$ can be recast as 
	\bes{
		X_t^{\c} = \int_0^t K(t-s) \left (C \c_s ds + \gamma dW_s \right),
	}
	where $K$ is the row vector $(1, \tilde K)$, $C = (1,0)^\T$ and $\gamma = (0,1)^\T$. The kernel $K$ is the Laplace transform of the $1\times 2$-matrix measure $\mu$ $=$ $\left( \delta_0(d \theta), \tilde\mu(\d \theta)\right)$. An application of Theorem \ref{T:mainoptimalsolution} gives an  optimal control of feedback form in $Y$:
	\begin{align} \label{alpha*ex}
	\alpha^*_t &=  -\frac{1}{N} \Big[ \Gamma_t(0,0) Y_{t}^1(0) + \int_{\R_+} \Gamma_t(\theta,0) Y_t^2(\theta)\tilde\mu(d\theta) \Big], 
	\end{align}
	where $\Gamma$ is solution to the real-valued infinite dimensional Riccati equation  
	\bes{
		\Gamma_t(\theta, \tau) &= \int_t^T e^{-(\theta + \tau)(s-t)} \left(Q - \Gamma_s(\theta, 0) N^{-1} \Gamma_s(0, \tau)\right)ds, \quad 	 
		\tilde\mu\otimes\tilde\mu-a.e., \quad t \in [0,T],
	}
	and $Y_t(\theta)$ $=$ $(Y_t^1(\theta),Y_t^2(\theta))^\top$ $=$ $\int_0^t e^{-\theta(s-t)} \left( C \c_s^* ds + \gamma dW_s \right)$. In particular, 
	\begin{align*}
	Y_t^2(\theta) &= \int_0^t e^{-\theta(t-s)} dW_s \\
	Y_t^1(0) &= \int_0^t \alpha_s^* ds \; = \; X_t^* -  \int_0^t \tilde K(t-s) d W_s \; = \; X_t^* - \int_{\R_+} Y_t^2(\theta) \tilde\mu(d\theta),
	\end{align*}	
	where $X^*$ $=$ $X^{\alpha^*}$, and  the last equality holds by stochastic Fubini's theorem. Plugging  the expressions of $Y^1$ and $Y^2$  into \eqref{alpha*ex} yields 	
	\bes{
		\alpha^*_t & = -\frac{1}{N} \entrepar{ \Gamma_t(0,0)X_t^* + \ints \left(\Gamma_t(\theta,0) -  \Gamma_t(0,0)  \right) Y_{t}^2(\theta) \tilde\mu(d \theta) },
	}
	which is the sum of a feedback form in $X^*$, and a second term capturing the non Markovianity of $X$, as for example  in the case of a fractional noise 
	with Hurst parameter $H$ $\leq$ $1/2$.  
	Note that $\Gamma(0,0)$ satisfies the standard Riccati equation in LQ control problem. 	
	One can also note that when $\tilde K \equiv 1$ then $\tilde\mu(d\theta) = \delta_0(d\theta)$, which implies that the optimal control takes the standard feedback form 
	$\alpha^*_t = -\frac{1}{N}  \Gamma_t(0,0)X_t^*$. 
	\ep
\end{example}

\begin{remark}
	Conventional linear--quadratic models,  see for instance \cite[Chapter 7]{yong1999stochastic},  are  naturally nested in our framework. Indeed, they are recovered by setting   $d=d'$ and $\mu=\delta_{0} I_{d}$, which corresponds to  $K(t)\equiv I_d$. In this case, the Riccati equations for $\Gamma(0,0),\Lambda(0), \chi$ reduce to the conventional matrix Riccati equations and 
	$Y^{\alpha}=X^{\alpha}$ so that we recover the usual expression for the optimal control \eqref{eq:optimalcontrol_monotone} and the value function
	\bes{
		\c^*_t &= -\hat{N}(\Gamma_{t}(0))^{-1} \left( h(t,\Gamma_{t}(0,0), \Lambda_{t}(0)) +  S(\Gamma_{t})(0) X^{\alpha^*}_t\right), \\
		V_t &= X_t^\T \Gamma_t(0,0) X_t + 2 X_t^\T \Lambda_t(0) + \chi_t.
	}
	\ep
\end{remark}

Conventional  linear--quadratic models can also be recovered  by considering a kernel which is a weighted sum of exponentials as detailed in the following example. This will turn out to be of crucial importance in the next section. 
\begin{example}\label{E:LQn}
	We set $d=d'=m=1$ and
	\begin{align}\label{eq:knexp} 
	K^n(t) & = \; \sum_{i=1}^n c^n_i e^{-\theta_i^n t},
	\end{align}	
	for some $n \in \mathbb N$, $c_i^n \in \R, \theta^n_i \geq 0$, $i=1,\ldots,n$. This corresponds to \eqref{eq:cmmu} with $\mu(d\theta)=\sum_{i=1}^n c^n_i \delta_{\theta_i^n}(d\theta)$ and Theorem~\ref{prop:representation_of_X} gives the representation
	\bes{\label{eq:Xnsum}
		X_t^{n,\alpha}&= g^n_0(t) + \sum_{i=1}^n  c_i^n Y^{n,i,\alpha}_t,
	}
	where $Y^{n,i,\alpha}:=Y^{\alpha}(\theta^n_i)$ are such that 
	\bes{\label{eq:Yfinitedim}
		dY^{n,i,\alpha}_t &= \Big[ - \theta_i^n  Y^{n,i,\alpha}_t + \tilde{b}\Big(t,\sum_{j=1}^n  c_j^n Y^{n,j,\alpha}_t, \alpha_t \Big)\Big]dt 
		+ \tilde{\sigma}\Big(t,\sum_{j=1}^n  c_j^n Y^{n,j,\alpha}_t, \alpha_t \Big)dW_t \\
		Y^{n,i,\alpha}_0 &= 0 , \quad i=1,\ldots,n. 
	}
	Whence, the problem reduces to a conventional linear-quadratic control for the finite-dimensional controlled system $(Y^{n,i,\alpha})_{1\leq i \leq n}$. In particular, the  system of Riccati  \eqref{eq:Riccatis_monotone} reduces to a a standard one in finite-dimension. For instance the equation for $\Gamma$ reduces to the  standard $n \times n$--matrix Riccati equation
	\begin{equation} \label{eq:riccaticlassic}
	\left\{
	\begin{array}{rl} 
	\dot{\Gamma}^n_{t} &= \;   -Q^n - (B^n)^\T \Gamma^n_{t} - \Gamma^n_{t}B^n - (D^n)^\T \Gamma^n_{t} D^n  \\
	&  \;\;  + \; \big((F^n)^\T \Gamma^n_{t} D^n + (C^n)^\T \Gamma^n_{t} \big)^\T \big(N^n+ (F^n)^\T \Gamma^n_{t} F^n\big)^{-1}\big((F^n)^\T \Gamma^n_{t} D^n + (C^n)^\T \Gamma^n_{t}\big) \\
	\Gamma^n_{T} &=  0,
	\end{array}
	\right.
	\end{equation}
	where the coefficients $(B^n,C^n,D^n,F^n,N^n,Q^n)$ $\in$ $\R^{n\times n}\times \R^{n} \times \R^{n\times n}\times\R^{n}\times \R_+\times\S_+^n$ 
	are defined by 
	\begin{align}
	B^n_{i,j} &= \; B c_i^n - \theta_i^n\delta_{ij},     &  D^n_{i,j} &= \;  D c_i^n,\\
	C^n_i &= \; C c_i^n, &  F^n_i &= \;  F c_i^n, \\
	Q^n_{i,j} &= \; Q, &  N^n &= \;  N,
	\end{align}
	for all $1 \leq i,j\leq n$.
	\ep
\end{example}

\begin{remark}
	The proofs of Theorems~\ref{T:Riccatiexistence} and \ref{T:mainoptimalsolution} can be easily adapted to account for a multi-dimensional Brownian motion and time-dependent bounded coefficients.
	\ep
\end{remark}

\subsection{Stability and approximation by conventional LQ  problems}
The second main result of the paper concerns the approximation of the possibly non-Markovian control problem by sequences of finite dimensional Markovian ones, which is of crucial importance for numerical implementations.  The main idea comes from the appro\-ximation of the measure $\mu$,  appearing in \eqref{eq:cmmu}, by simpler  measures $\mu^n$, or  
equi\-valently approximating $K$ by simpler kernels $K^n$ given by 
\begin{align}\label{eq:Knmun}
K^n(t)=\int_{\R_+} e^{-\theta t} \mu^n(d\theta), \quad t >0.
\end{align} 
We also authorize the approximation of the input curve $g_0$.  By substituting $(K,g_0)$ with  $(K^n,g_0^n)$, the approximating problem  reads
\bes{
	\label{eq:approx_n}
	V^n_0=	&	\inf_{\c \in \Acal} J^n(\c) 
}
where 
\bes{
	J^n(\c) &= \;  \inf_{\c \in \Acal} \E \left[\int_0^T \left((X^{n, \alpha}_s)^{\top} Q X^{n, \alpha}_s + 2 L_s^\top X^{n, \alpha}_s + \c^\top_s N\c_s\right) ds \right], \\
	X_t{n, \alpha}&=\;  g^n_0(t) + \int_0^t K^n(t-s)b(s,X_s^{n, \alpha},\alpha_s) ds +   \int_0^t K^n(t-s)\sigma(s,X_s^{n, \alpha},\alpha_s) dW_s. \label{eq:approxprob}
}

The following theorem establishes the stability of stochastic Volterra linear--quadratic control problems. Its proof is given in Section~\ref{S:approximationproof}. 
\begin{theorem}\label{T:mainstability}  	Let $\beta,\gamma$ be bounded and measurable functions on $[0,T]$ and $g_0$ be continuous. Assume that $\mu$ satisfies \eqref{eq:totalvar}--\eqref{eq:mucont} and let $K$ be	as in \eqref{eq:cmmu}. Let  $(g_0^n)_{n \geq 1}$ be a sequence of continuous  functions and  $(K^n)_{n \geq 1}$ be a sequence of kernels of the form \eqref{eq:Knmun} with  respective   measures $\mu^n$ satisfying  \eqref{eq:totalvar}-\eqref{eq:mucont}, for each $n \in \N$. Assume \eqref{assumption:QN} and that $Q$ is invertible.
	Denote by  $V^*$ and $V^{n*}$ the respective optimal value processes given by Theorem~\ref{T:mainoptimalsolution} for the respective inputs $(g_0,K)$ and $(g_0^n, K^n)$, for $n\geq 1$. 
	If 
	\bes{
		\label{assumption:approximation}
		\|K^n - K\|_{L^2(0,T)} \to 0 \quad \mbox{and} \quad \|g_0^n - g_0\|_{L^2(0,T)} \to 0, \quad \mbox{as } n \to \infty, 
	}
	then, 
	\begin{align}\label{eq:convvaluefunction}
	V^{n*}_0 \to V^*_0, \quad \mbox{as } n \to \infty,
	\end{align}
	with a rate of convergence given by
	\begin{align} \label{convVn} 
	|V^*_{0} - V^{n*}_{0}|  & \leq c   \Big( \|g_0^n- g_0\|_{L^2(0,T)}  + \|K^n-K\|_{L^2(0,T)}   \Big),
	\end{align} 
	for some positive constant $c$ independent of $n$. 
\end{theorem}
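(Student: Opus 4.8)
The plan is to reduce the comparison of the two optimal values to a uniform estimate on the difference of cost functionals evaluated at a common control. Let $\alpha^*$ and $\alpha^{n,*}$ denote the optimal controls for the inputs $(g_0,K)$ and $(g_0^n,K^n)$ produced by Theorem~\ref{T:mainoptimalsolution} (the latter being well-posed since $\mu^n$ satisfies \eqref{eq:totalvar}). Optimality gives
\begin{align*}
J(\alpha^*) - J^n(\alpha^{n,*}) &\leq J(\alpha^{n,*}) - J^n(\alpha^{n,*}), \\
J^n(\alpha^{n,*}) - J(\alpha^*) &\leq J^n(\alpha^*) - J(\alpha^*),
\end{align*}
so that $|V_0^* - V_0^{n,*}| \leq \max\{ |J(\alpha^{n,*}) - J^n(\alpha^{n,*})|,\ |J(\alpha^*) - J^n(\alpha^*)|\}$. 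It therefore suffices to establish a bound $|J(\alpha) - J^n(\alpha)| \leq c(\|g_0^n-g_0\|_{L^2(0,T)} + \|K^n-K\|_{L^2(0,T)})$, with $c$ independent of $n$, valid both for the fixed control $\alpha^*$ and for the $n$-dependent family $\alpha^{n,*}$.

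The main obstacle, and the place where the invertibility of $Q$ is used, is that the constant must not deteriorate along the sequence $\alpha^{n,*}$; this demands uniform-in-$n$ moment bounds on $\alpha^{n,*}$ and on the associated states. Completing the square in the running cost gives $f(x,a) = (x+Q^{-1}L)^\top Q(x+Q^{-1}L) + a^\top N a - L^\top Q^{-1}L \geq \lambda|a|^2 - L^\top Q^{-1}L$, where $\lambda>0$ comes from \eqref{assumption:QN}. Hence
\[
\lambda\,\E\Big[\int_0^T |\alpha^{n,*}_s|^2\,ds\Big] \leq J^n(\alpha^{n,*}) + T\,L^\top Q^{-1}L \leq J^n(0) + T\,L^\top Q^{-1}L,
\]
using $J^n(\alpha^{n,*}) = \inf_\alpha J^n(\alpha) \leq J^n(0)$. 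Since $\|K^n\|_{L^2(0,T)}$ and $\|g_0^n\|_{L^2(0,T)}$ converge, they are bounded uniformly in $n$, so the zero-control state has $\int_0^T\E|X^{n,0}_t|^2\,dt$ bounded uniformly and thus $J^n(0)\leq c$ uniformly. This yields $\sup_n \E\int_0^T|\alpha^{n,*}_s|^2\,ds < \infty$. Feeding this into a Volterra--Gronwall estimate for the state equation, with kernel $|K^n|^2$ of uniformly bounded $L^1(0,T)$ norm, furnishes uniform bounds $\int_0^T \E|X^{n,\alpha^{n,*}}_t|^2\,dt \leq c$ and $\int_0^T \E|X^{\alpha^{n,*}}_t|^2\,dt \leq c$ for the approximating and original dynamics alike.

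The quantitative core is the stability of the state. Writing $\delta X_t = X^\alpha_t - X^{n,\alpha}_t$ and subtracting the two Volterra equations, the difference splits into the term $(g_0-g_0^n)(t)$, two convolutions against $K-K^n$ carrying the coefficients evaluated along $X^\alpha$, and two convolutions against $K^n$ carrying $B\,\delta X$ and $D\,\delta X$. Estimating the drift convolutions by Cauchy--Schwarz and the stochastic convolutions by the It\^o isometry, and crucially keeping the stochastic-convolution contribution $\int_0^t |(K-K^n)(t-s)|^2\,\E|\sigma(s,X^\alpha_s,\alpha_s)|^2\,ds$ inside the inhomogeneous term $a(t)$ (so that after the final integration in $t$ only the time-integrated moments $\E\int_0^T|b|^2\,ds$ and $\E\int_0^T|\sigma|^2\,ds$ enter, rather than suprema in $s$), one reaches an inequality $\phi(t)\leq a(t)+\int_0^t \ell^n(t-s)\phi(s)\,ds$ for $\phi(t)=\E|\delta X_t|^2$, with $\ell^n\geq 0$ of uniformly bounded $L^1(0,T)$ norm and $\int_0^T a(t)\,dt \leq c(\|g_0-g_0^n\|_{L^2(0,T)}^2 + \|K-K^n\|_{L^2(0,T)}^2)$. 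The generalized Gronwall inequality for convolution kernels then gives
\[
\int_0^T \E|\delta X_t|^2\, dt \leq c\big(\|g_0-g_0^n\|_{L^2(0,T)}^2 + \|K-K^n\|_{L^2(0,T)}^2\big),
\]
with $c$ uniform in $n$, where the input moments are exactly those controlled uniformly in the previous paragraph.

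To conclude, I expand the quadratic cost difference as
\[
J(\alpha) - J^n(\alpha) = \E\Big[\int_0^T \big( \delta X_t^\top Q (X^\alpha_t + X^{n,\alpha}_t) + 2 L^\top \delta X_t\big)\,dt\Big]
\]
and apply Cauchy--Schwarz in $(\omega,t)$: the first term is bounded by $\|Q\|\,(\int_0^T\E|\delta X_t|^2\,dt)^{1/2}(\int_0^T\E|X^\alpha_t+X^{n,\alpha}_t|^2\,dt)^{1/2}$ and the second by $2|L|\sqrt{T}\,(\int_0^T\E|\delta X_t|^2\,dt)^{1/2}$. Invoking the uniform state bounds, the stability estimate, and $\sqrt{a^2+b^2}\leq a+b$ yields $|J(\alpha)-J^n(\alpha)| \leq c(\|g_0-g_0^n\|_{L^2(0,T)} + \|K-K^n\|_{L^2(0,T)})$ uniformly, for both $\alpha=\alpha^*$ and $\alpha=\alpha^{n,*}$. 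Combined with the reduction of the first paragraph, this establishes the convergence \eqref{eq:convvaluefunction} and the rate \eqref{convVn}. The delicate step is the uniform control of the approximating optimizers $\alpha^{n,*}$, resolved through the coercivity afforded by the invertibility of $Q$ together with the comparison $J^n(\alpha^{n,*})\leq J^n(0)$.
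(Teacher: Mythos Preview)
Your proof is correct and follows essentially the same route as the paper: a priori and stability $L^2$-estimates on the Volterra state via convolution--Gronwall, the identical expansion of $J-J^n$ as a bilinear form in $\delta X$ and $X+X^n$, and the uniform bound on $\alpha^{n,*}$ via the coercivity afforded by $Q$ invertible together with $J^n(\alpha^{n,*})\leq J^n(0)$. The only differences are cosmetic: the paper phrases the final reduction by restricting the infimum to a ball $\mathcal{A}_\kappa=\{\alpha:\E[\|\alpha\|_{L^2}^2]\leq\kappa\}$ rather than writing the two-sided optimality inequality, and it makes explicit a point you leave implicit---namely that the uniform Gronwall constant requires the \emph{resolvents} $R^n$ of $c|K^n|^2$ (not just the kernels) to be uniformly bounded in $L^1$, which follows from the continuous dependence of the resolvent on the kernel once $K^n\to K$ in $L^2$.
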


\vspace{1mm}

Combined with Example~\ref{E:LQn}, Theorem~\ref{T:mainstability} provides an approximation of linear--quadratic stochastic Volterra optimal control problems by conventional Markovian linear--quadratic models in finite dimension. To ease notations we restrict to the case $d=d'=m=1$, for higher dimension matrices need to be replaced by  tensors in what follows. The idea is to approximate 
$\mu$ by a discrete measure $\mu^n$ as follows.   Fix $n \geq 1$ and $(\eta^n_i)_{0 \leq i \leq n}$ a partition of $\R_+$. Let 
$\mu^n(d\theta)$ $=$ $\sum_{i=1}^n c_i^n \delta_{\theta^n_i}(d\theta)$ with 
\begin{align}\label{eq:cixi}
c^n_i \; = \;  \int_{\eta^n_{i-1}}^{\eta^n_i} \mu(dx) \quad  \mbox{ and } \quad  \theta^n_i \; = \;  \frac 1 {c^n_i} \int_{\eta^n_{i-1}}^{\eta^n_i} \theta \mu(d\theta), \quad i=1,\ldots, n.
\end{align}
Then, for a suitable choice of the partition  $(\eta^n_i)_{0 \leq i \leq n}$, we obtain the convergence 
$$ \|K^n- K\|_{L^2(0,T)} \to 0, \quad \mbox{ as $n\to \infty$},$$
where $K^n$ is given by \eqref{eq:knexp},
see for instance \cite[Proposition 3.3 and Remark 3.4]{AJEE18a}. In particular,  with the fractional kernel $K_H$ given by \eqref{eq:kernelfrac}, an even $n$, and the geometric partition $\eta^n_i=r_n^{i-n/2}$, $i=0,\dots,n$, for some  $r_n>1$,
the coefficients \eqref{eq:cixi} with $\mu_H$ as in  \eqref{eq:mufrac} are explicitly given by
{\begin{align}\label{eq: ci and xi}
	c^n_i=\frac{(r_n^{1-\alpha}-1)r_n^{(\alpha-1)(1+n/2)}}{\Gamma(\alpha)\Gamma(2-\alpha)} r_n^{(1-\alpha)i} \;\;\mbox{ and }\;\;  x^n_i= \frac{1-\alpha}{2-\alpha}\frac{r_n^{2-\alpha}-1}{r_n^{1-\alpha}-1} r_n^{i-1-n/2},  \;\;  i=1,\ldots,n,
	\end{align}} 
where $\alpha:=H+1/2$.
If the sequence $(r_n)_{n \geq 1}$ satisfies
\begin{align}\label{eq:rn cond}
r_n \downarrow 1 \quad \mbox{and} \quad  n \ln r_n \to \infty, \quad \mbox{as } n\to \infty,
\end{align}
then, 
$$ \|K^n- K_H\|_{L^2(0,T)} \to 0, \quad \mbox{ as $n\to \infty$},$$ see \cite[Lemma A.3]{abi2019lifting}. 
In practice, the free  parameter $r_n$ can be chosen by minimizing the $L^2$ norm between $K^n$ and $K_H$,  for instance if $n=20$, setting $r_{20}=2.5$ yields very good approximations for the un-controlled  stochastic Volterra equation,  see \cite{abi2019lifting} for a more detailed practical study.  For each $n$, the approximate control problem is a conventional linear quadratic one in finite dimension for the state variables \eqref{eq:Yfinitedim} with the standard $n\times n$ matrix Riccati equation \eqref{eq:riccaticlassic}. This allows to numerically solve the Riccati equations and simulate the process $X^{n,\alpha}$ given by \eqref{eq:Xnsum}, leading to computation of the value function $V_0^{n*}$ and the optimal control $\alpha^n$ as in \eqref{eq:optimalcontrol_monotone} with $\mu$ replaced by $\mu^n$. 

\section{An infinite dimensional SDE with Lipschitz coefficients}\label{S:sde}

We aim to establish the existence of a solution to the stochastic Volterra equation and that of an admissible optimal control. 
For this, we shall study more generally the existence and uniqueness of a solution to an infinite dimensional stochastic differential equation (SDE) in $L^1(\mu)$. Throughout this section, we fix 
$t$ $\in$ $[0,T]$, $d,d',n\in \N$, $p \geq 2$,  $\mu$ a $d\times d'$-measure {satisfying \eqref{eq:totalvar}--\eqref{eq:mucont}},  and $W$ denotes an $n$--dimensional standard Brownian motion.

Let us  consider the infinite dimensional SDE in $L^1(\mu)$: 
\begin{align} \label{eq:eqYinfini}
d\widetilde Y_s & = \; \big( A^{mr}  \widetilde Y_s + \delta(s,\widetilde Y_s) \big) ds +  \Sigma(s,\widetilde Y_s) dW_s, \;\;\;  \quad \widetilde Y_t \; = \; \xi,
\end{align}
on $[t,T]$, where $A^{mr}$ is the mean-reverting operator as defined in \eqref{meanoperator}, the inputs $\xi$ $\in$ $L^1(\mu)$,  and 
$\delta$ $:$ $[0,T]\times\Omega\times L^1(\mu)$ $\to$  {$L^\infty(\mu)$}, 
$\Sigma$ $:$ $[0,T]\times\Omega\times L^1(\mu)$ $\to$ {$L^\infty(\mu)^n$}.

We look   for $L^1(\mu)$--valued solutions to \eqref{eq:eqYinfini} in the strong probabilistic sense and in the  mild analytical sense. More precisely, given a filtered probability space $(\Omega,\Fc, (\Fc_s )_{s \geq 0},\P )$ supporting {an} $n$ dimensional Brownian motion $W$,  we say that a progressively measurable process $\widetilde  Y$  is a (mild) solution to \eqref{eq:eqYinfini} 
on $[t,T]$ if for each $s \in [t,T]$,
\bes{
	\widetilde 	Y_s(\theta) \; = \;  e^{-\theta(s-t)} \xi(\theta)+ & \int_t^s e^{-\theta(s-u)} \delta(u,\widetilde Y_u)(\theta)  du    \\
	&\quad  +\int_t^s e^{-\theta(s-u)} \Sigma(u,\widetilde Y_u)(\theta)  dW_u  , \quad \quad  \mu-a.e.,
	\label{eq:SDE_infinite_dim}
}
such that 
\begin{align}\label{eq:estimateYLmu}
\sup_{t\leq s \leq T} \E\left[ \|\widetilde Y_s\|^p_{L^1(\mu)}  \right] < \infty.
\end{align}

The following theorem establishes the strong existence and uniqueness of a solution to \eqref{eq:SDE_infinite_dim} under Lipschitz conditions.

\begin{theorem}\label{T:existenceY}  
	Fix  $p\geq 2$ and $t \leq T$.  Assume that $\delta$ and $\Sigma$  are progressively measurable and that there exists positive constants  $c_{\rm LG}$,  $c_{\rm Lip}$, 
	and  a progressively measurable process $\phi$ with 
	$$\sup_{t \leq s \leq T}\E [|\phi_s|^p]<\infty,$$
	such that for all $y,y' \in L^1(\mu),$ and  $t\leq s \leq T$, 
	\begin{align}
	| \delta(s,y)(\theta)| +  | \Sigma (s,y)(\theta)| & \leq \;  c_{\rm LG}  \left(1 + |\phi_s| + \| y \|_{L^1(\mu)} \right),  	\label{assumption:EDS_dim_inf_growth}  \\
	| \delta(s,y)(\theta)- \delta(s,y')(\theta)|  +   |\Sigma(s,y)(\theta)- \Sigma(s,y')(\theta)| & \leq  \;  c_{\rm Lip}    \| y-y' \|_{L^1(\mu)}, 		 \label{assumption:EDS_dim_inf_lip}
	\end{align}
	$\P \otimes \mu-a.e.$  Then,  for any  $\Fc_t$--measurable random variable $\E[\|\xi\|^p_{L^1(\mu)}]<\infty$, 
	there exists a unique strong solution $Y$ to \eqref{eq:SDE_infinite_dim} on $[t,T]$ such that  \eqref{eq:estimateYLmu} holds.
\end{theorem}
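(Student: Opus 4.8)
The plan is to realise the mild solution as the unique fixed point of the integral map defined by the right-hand side of \eqref{eq:SDE_infinite_dim}, via the Banach fixed point theorem on a suitably weighted space of processes. Let $\mathcal H_p$ denote the space of (classes of) progressively measurable processes $Y:[t,T]\times\Omega\to L^1(\mu)$ with $\sup_{t\le s\le T}\E[\|Y_s\|_{L^1(\mu)}^p]<\infty$; for $\lambda>0$ equip it with the norm
\[ \|Y\|_\lambda=\sup_{t\le s\le T}\Big(e^{-\lambda(s-t)}\,\E\big[\|Y_s\|_{L^1(\mu)}^p\big]\Big)^{1/p}, \]
which is equivalent to the unweighted one and under which $\mathcal H_p$ is a Banach space. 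Define $\Phi(Y)$ to be the process given by the right-hand side of \eqref{eq:SDE_infinite_dim}. The whole argument rests on the scalar kernel $\bar K(r)=\ints e^{-\theta r}|\mu|(d\theta)$ associated with the total variation measure $|\mu|$: since $|\mu|$ satisfies \eqref{eq:totalvar}, Lemma~\ref{L:L2kernel} gives $\bar K\in L^2([0,T])\subset L^1([0,T])$, and the fact that the growth bound \eqref{assumption:EDS_dim_inf_growth} and the Lipschitz estimate \eqref{assumption:EDS_dim_inf_lip} are uniform in $\theta$ is precisely what lets the $\theta$-dependence be absorbed into $\bar K$.

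First I would check that $\Phi$ maps $\mathcal H_p$ into itself. Taking $L^1(\mu)$-norms inside the drift integral and using \eqref{assumption:EDS_dim_inf_growth} bounds it by $c_{\rm LG}\int_t^s\bar K(s-u)(1+|\phi_u|+\|Y_u\|_{L^1(\mu)})\,du$, and a Jensen (or Minkowski integral) inequality together with $\bar K\in L^1$ controls its $p$-th moment uniformly in $s$; the free term $e^{-\theta(s-t)}\xi(\theta)$ contributes at most $\|\xi\|_{L^1(\mu)}$ since the semigroup is a contraction. For the stochastic convolution, the key is to interchange the $|\mu|(d\theta)$-integral and the $L^p(\Omega)$-norm by Minkowski's integral inequality, reducing to the scalar Itô integrals $\int_t^s e^{-\theta(s-u)}\Sigma(u,Y_u)(\theta)\,dW_u$ for fixed $\theta$; the Burkholder--Davis--Gundy inequality, the growth bound, and the elementary estimate \eqref{eq:inequalityexp} then yield a bound of the form $c\,\ints(1\wedge\theta^{-1/2})|\mu|(d\theta)\cdot\sup_u\E[(1+|\phi_u|+\|Y_u\|_{L^1(\mu)})^p]^{1/p}$, finite by \eqref{eq:totalvar}. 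The joint measurability in $(\omega,\theta)$ needed to view $\Phi(Y)$ as a genuine $L^1(\mu)$-valued progressively measurable process is handled exactly as the stochastic-Fubini step in the proof of Theorem~\ref{prop:representation_of_X}.

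The contraction is where the weight $\lambda$ enters. For the drift, the Lipschitz bound \eqref{assumption:EDS_dim_inf_lip} and Minkowski's integral inequality give
\[ \E\big[\|\Phi(Y)_s-\Phi(Y')_s\|_{L^1(\mu)}^p\big]^{1/p}\le c_{\rm Lip}\int_t^s\bar K(s-u)\,\E\big[\|Y_u-Y_u'\|_{L^1(\mu)}^p\big]^{1/p}\,du, \]
so multiplying by $e^{-\lambda(s-t)}$ and inserting the definition of $\|\cdot\|_\lambda$ produces the multiplicative constant $c_{\rm Lip}\int_0^\infty e^{-\lambda r}\bar K(r)\,dr=c_{\rm Lip}\ints\frac{|\mu|(d\theta)}{\theta+\lambda}$. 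For the diffusion, the same Minkowski--BDG scheme combined with \eqref{assumption:EDS_dim_inf_lip} leads to a constant of the form $c_p\, c_{\rm Lip}\ints\frac{|\mu|(d\theta)}{\sqrt{2(\theta+\lambda)}}$. Condition \eqref{eq:totalvar} forces $|\mu|$ to be finite on $[0,1]$ and $\theta^{-1/2}$ to be $|\mu|$-integrable on $(1,\infty)$, so by dominated convergence both $\ints\frac{|\mu|(d\theta)}{\theta+\lambda}$ and $\ints\frac{|\mu|(d\theta)}{\sqrt{\theta+\lambda}}$ tend to $0$ as $\lambda\to\infty$. Choosing $\lambda$ large enough makes the two constants sum to less than $1$, so $\Phi$ is a contraction on $(\mathcal H_p,\|\cdot\|_\lambda)$; its unique fixed point is the desired unique strong mild solution, and the moment bound \eqref{eq:estimateYLmu} is inherited from membership in $\mathcal H_p$.

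I expect the main obstacle to be the diffusion term: because $L^1(\mu)$ is not a Hilbert space one cannot apply an infinite-dimensional Itô isometry directly, and the care needed lies in legitimately interchanging the spatial integral $\ints|\mu|(d\theta)$ with the probabilistic $L^p$-norm (Minkowski) and then applying BDG fibre-wise in $\theta$, all while verifying that the resulting $\theta$-integrals converge through \eqref{eq:totalvar} and \eqref{eq:inequalityexp}. A secondary technical point is the joint $(\omega,\theta)$-measurability of the stochastic convolution, ensuring that $\Phi(Y)$ genuinely defines an $L^1(\mu)$-valued progressively measurable process.
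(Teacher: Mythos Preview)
Your proposal is correct and follows the same overall strategy as the paper: both set up the mild solution map on the space of processes with $\sup_{t\le s\le T}\E[\|Y_s\|_{L^1(\mu)}^p]<\infty$, introduce the exponentially weighted equivalent norm $\|\cdot\|_\lambda$, and obtain a contraction for $\lambda$ large via the integrability condition \eqref{eq:totalvar}. The only noteworthy technical difference is in how the $p$-th moment of the $L^1(\mu)$-norm is estimated: the paper applies Jensen's inequality successively on the normalized measures $(1\wedge\theta^{-1/2})|\mu|(d\theta)/\!\int(1\wedge\tau^{-1/2})|\mu|(d\tau)$, $du/(s-t)$ and $e^{-2\theta(s-u)}du/\!\int e^{-2\theta(s-v)}dv$ to push the exponent $p$ inside all integrals, arriving at the contraction constant
\[
M(\lambda)=c\int_{\R_+}|\mu|(d\theta)\,(1\wedge\theta^{(p-1)/2})\int_t^s e^{-2(\theta+\lambda p/2)(s-u)}du\left(\int_t^s e^{-2\theta(s-v)}dv\right)^{(p-2)/2},
\]
whereas you interchange the $|\mu|(d\theta)$-integral with the $L^p(\Omega)$-norm by Minkowski and then apply BDG fibrewise in $\theta$, which yields the cleaner constants $\int_{\R_+}(\theta+\lambda)^{-1}|\mu|(d\theta)$ and $\int_{\R_+}(\theta+\lambda)^{-1/2}|\mu|(d\theta)$; both routes send the constant to $0$ as $\lambda\to\infty$ by dominated convergence against \eqref{eq:totalvar}.
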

\begin{proof}
	The proof is an application of the  contraction mapping principle. We denote by $\mathcal S^p_{t,T}$ the space of progressively measurable processes $\widetilde Y:\Omega\times  [t,T]\to L^1(\mu)$  such that 
	\begin{align*}
	\| \widetilde Y \|_{\Scal^p_{t,T}} &:= \;  \sup_{t\leq s \leq T} \E\left[ \|\widetilde  Y_s\|_{L^1(\mu)}^p \right]^{1/p} <\infty.
	\end{align*}
	$(\mathcal S^p_{t,T},\|\cdot\|_{\mathcal S^p_{t,T}})$ is  a Banach space. We consider the following family of norms on $\mathcal S^p_{t,T}$:
	\[
	\|\widetilde Y\|_{\lambda} :=  \sup_{t\leq s \leq T} e^{-\lambda (s-t)} \E\left[ \|\widetilde Y_{s}\|^p_{L^1(\mu)}\right]^{1/p}, \quad \lambda >0.
	\]
	For every $\widetilde Y \in \Scal^p_{t,T}$, define a new process $\Tcal \widetilde Y$  by 
	\bes{
		(\Tcal{\widetilde Y})_s(\theta) &= \; e^{-\theta(s-t)} \xi(\theta) + \int_t^s e^{-\theta(s-u)} \delta(u,\widetilde Y_u)(\theta)  du    \\
		&\quad  \quad  + \; \int_t^s e^{-\theta(s-u)} \Sigma(u,\widetilde Y_u)(\theta)  dW_u  \\
		&= \; \textbf{I}_s(\theta) +\textbf{II}_s(\theta)  + \textbf{III}_s(\theta),   \quad \quad  \mu-a.e., \quad t\leq s \leq T.
	}
	Since the norms $\|\cdot\|_{\mathcal S^p_{t,T}}$ and $\|\cdot\|_{\lambda}$ are equivalent, it is enough to find $\lambda>0$ such that $\mathcal T$ defines a contraction on $(\mathcal S^p_{t,T},\|\cdot\|_{\lambda})$. That is, we look for $\lambda>0$ and $M<1$ such that
	\begin{equation}\label{eq:contract temp_sde}
	\| \mathcal T \widetilde Y - \mathcal T \widetilde Z \|_{\lambda} \leq M  \| \widetilde Y-  \widetilde Z \|_{\lambda}, \quad  \widetilde Y,\widetilde Z \in \mathcal S^p_{t,T}.	
	\end{equation} 
	\textit{Step 1:}  	We first prove that $\Tcal (\mathcal S^p_{t,T}) \subset  \mathcal S^p_{t,T}$.  Fix $\widetilde  Y \in  \mathcal S^p_{t,T}$ and  $t\leq s \leq T $.  $ \mathcal T \widetilde  Y$ is again progressively measurable. Jensen's inequality applied  to the convex function $\|\cdot\|^p_{L^1(\mu)}$ leads to 
	\begin{align*}
	\|(\mathcal T \widetilde  Y)_s\|^p_{L^1(\mu)} \leq  3^{p-1}\left( \|\textbf{I}_s\|^p_{L^1(\mu)} + \|\textbf{II}_s\|^p_{L^1(\mu)} + \|\textbf{III}_s\|^p_{L^1(\mu)}\right).
	\end{align*}
	Since $\E[\|\xi\|^p_{L^1(\mu)}]<\infty$, we have 
	\begin{align*}
	\E\big[ \|\textbf{I}_s\|^p_{L^1(\mu)}  \big] &= \;  	 \E\left[\|e^{-(\cdot)(s-t)} \xi \|^p_{L^1(\mu)}\right]  \leq   \E\left[\| \xi \|^p_{L^1(\mu)}\right] <\infty,
	\end{align*}
	where we used the bound $e^{-\theta(s-t)} \leq 1$, since $\mu$ is supported on $\R_+$. 
	Three successive applications of Jensen's inequality on the normalized measures 
	$$\frac {(1\wedge \theta^{-1/2})\mu(d\theta)}{\int_{\R_+}  (1\wedge \tau^{-1/2})\mu(d\tau)}, \quad \frac{du}{(s-t)}, \quad  \frac{e^{-2\theta(s-u)}du}{\int_s^t e^{-2\theta(s-v)}dv},$$ yield for a constant   $c$  that may vary from line to line 
	\begin{align*}
	\|\textbf{II}_s\|^p_{L^1(\mu)} &= \;  \left(\int_{\R_+}  |\mu|(d\theta) \left| \int_t^s e^{-\theta(s-u)} \delta(u,\widetilde Y_u)(\theta)  du \right| \right)^p \\
	&\leq \; c  \int_{\R_+}  |\mu|(d\theta) \left(1\wedge \theta^{(p-1)/2}\right) \left|	\int_t^s e^{-\theta(s-u)} \delta(u,\widetilde Y_u)(\theta)  du \right|^p   \\
	&\leq \;  c  \int_{\R_+}  |\mu|(d\theta) \left(1\wedge \theta^{(p-1)/2}\right) \left|	\int_t^s e^{-2\theta(s-u)} |\delta(u,\widetilde Y_u)(\theta)|^2  du \right|^{p/2}   \\
	&\leq \; c  \int_{\R_+}  |\mu|(d\theta) \left(1\wedge \theta^{(p-1)/2}\right)\\
	&\quad \quad \quad \times \int_t^s e^{-2\theta(s-u)} |\delta(u,\widetilde Y_u)(\theta)|^p du  \left(\int_t^s e^{-2\theta(s-v)} dv\right)^{(p-2)/2}.
	\end{align*}	
	Taking expectation combined with the growth condition  \eqref{assumption:EDS_dim_inf_growth} and the fact that $\widetilde Y \in \mathcal S^p_{t,T}$ leads to 
	\begin{align*}
	\E \left[ \|\textbf{II}_s\|^p_{L^1(\mu)} \right] &\leq c\left( 1 + \sup_{t\leq u \leq T }\E \left[ |\phi_u|^p \right]  +  \|\widetilde Y\|^p_{\mathcal S^p_{t,T}}  \right) \\
	&\quad \quad \quad \quad \quad \quad \times    \int_{\R_+}  |\mu|(d\theta) \left(1\wedge \theta^{(p-1)/2}\right) 	\left(\int_t^s e^{-2\theta(s-u)} du \right)^{p/2}\\
	&=c  \int_{\R_+}  |\mu|(d\theta) \left(1\wedge \theta^{(p-1)/2}\right)\left( \frac{  1 - e^{-2\theta(s-t)}}{2\theta}\right)^{p/2}.
	\end{align*}
	
	Similarly, combining the same Jensen's inequalities with the 
	Burkholder-Davis-Gundy  ine\-quality, we get  	
	\begin{align*}
	\E\left[\|\textbf{III}_s\|^p_{L^1(\mu)}\right] &= \;  \E\left[\left(\int_{\R_+}  |\mu|(d\theta) \left|	\int_t^s e^{-\theta(s-u)} \Sigma(u,\widetilde Y_u)(\theta)  {dW_u}du \right| \right)^p \right] \\
	&\leq \; c  \int_{\R_+}  |\mu|(d\theta) (1\wedge \theta^{(p-1)/2})  \\
	&\quad \quad \quad \times \int_t^s e^{-2\theta(s-u)} \E\left[|\Sigma(u,\widetilde Y_u)(\theta)|^p\right] du \left(\int_s^t e^{-2\theta(s-v)} dv\right)^{(p-2)/2} \\
	&\leq \; c   \int_{\R_+}  |\mu|(d\theta) \left(1\wedge \theta^{(p-1)/2}\right)\left( \frac{  1 - e^{-2\theta(s-t)}}{2\theta}  \right)^{p/2}
	\end{align*}
	where the last inequality follows from the growth condition  \eqref{assumption:EDS_dim_inf_growth} and the fact that $\widetilde Y \in \mathcal S^p_{t,T}$.
	Recalling  inequality \eqref{eq:inequalityexp}, we get that 
	\begin{align}\label{eq:temp exp mu}
	\int_{\R_+}  |\mu|(d\theta) \left(1\wedge \theta^{(p-1)/2}\right) \left( \frac{  1 - e^{-2\theta(s-t)}}{2\theta}  \right)^{p/2} \leq c  \int_{\R_+}  |\mu|(d\theta) \left(1\wedge \theta^{-1/2}\right)
	\end{align} 
	which is finite due to condition \eqref{eq:totalvar}. This shows that 
	$$ \E\left[\|\textbf{II}_s\|^p_{L^1(\mu)}\right] +   \E\left[\|\textbf{III}_s\|^p_{L^1(\mu)}\right] \; \leq \;  c \; < \;  \infty.$$
	Combining the above proves that $\|\Tcal \widetilde Y \|_{\mathcal S^p_{t,T}}< \infty$ and hence $\Tcal : \mathcal S^p_{t,T} \to \mathcal S^p_{t,T}$.\\
	\textit{Step 2:} We prove that there exists $\lambda>0$ such that~\eqref{eq:contract temp_sde} holds.  Let $\widetilde  Y,\widetilde  Z \in \mathcal S^p_{t,T}$ such that $\|\widetilde  Y\|_{\lambda}$ and  $\|\widetilde  Z\|_{\lambda}$ are finite. 	Similarly to Step 1,  Jensen and Burkholder--Davis--Gundy inequalities combined with the Lipschitz condition \eqref{assumption:EDS_dim_inf_lip} lead to
	\bes{\|\mathcal T \widetilde  Y - \mathcal T \widetilde  Z\|^p_{\lambda}  \;  \leq \;   M(\lambda) \|\widetilde  Y -  \widetilde  Z\|_{\lambda}^p ,
	}
	where 
	\bes{
		M(\lambda) &=  c\int_{\R_+}  |\mu|(d\theta) \left(1\wedge \theta^{(p-1)/2}\right) 	\int_t^s e^{-2\theta(s-u)} e^{-\lambda p(s-u)}du \left(\int_{t}^s e^{-2\theta(s-v)}dv\right)^{(p-2)/2}.
	}
	By the dominated convegence theorem, recall \eqref{eq:temp exp mu},  $M(\lambda)$ tends to $0$ as $\lambda$ goes to $+\infty$.  We can therefore choose $\lambda_0 > 0$  so that  \eqref{eq:contract temp_sde} holds with $M(\lambda_0) < 1$.
	An application of the contraction mapping theorem  yields the claimed existence and uniqueness statement in $(\mathcal S^p_{t,T},\| \cdot\|_{\mathcal S^p_{t,T}})$ together with \eqref{eq:estimateYLmu}.  
\end{proof}

\begin{example}\label{ex:coefficientssde}
	Fix $\alpha \in \Acal$,  the conditions \eqref{assumption:EDS_dim_inf_growth}-\eqref{assumption:EDS_dim_inf_lip} are satisfied for the following specification  of $\delta$ and $\Sigma$:
	\begin{align*}
	\delta(s,\omega,y)(\theta) &= b_0(s,\omega,\theta) +  \int_{\R_+} B_0(s,\omega,\theta,\tau)\mu(d\tau)y(\tau) +  C_0(s,\omega,\theta)\alpha_s (\omega)  \\
	\Sigma(s,\omega,y)(\theta) &= \gamma_0(s,\omega,\theta)  +  \int_{\R_+} D_0(s,\omega,\theta,\tau)\mu(d\tau)y(\tau) +  F_0(s,\omega,\theta)\alpha_s (\omega), 
	\end{align*}
	where
	\begin{align}
	|b_0(s,\theta)| + |\gamma_0(s,\theta)|  + |C_0(s,\theta)|  + |F_0(s,\theta)| &\leq \;  c , \quad  \P \otimes \mu-a.e., \quad t\leq s\leq T, \label{eq:examplecoeff1}\\
	|B_0(s,\theta,\tau)|   + |D_0(s,\theta,\tau)| & \leq \;  c , \quad \P \otimes  \mu\otimes \mu-a.e., \quad t\leq s\leq T,  \label{eq:examplecoeff2}
	\end{align}
	for some constant $c$.
	\ep
\end{example}

\vspace{1mm}

The existence and uniqueness of a strong solution to the stochastic Volterra equation \eqref{eq:sve} readily follows from Theorem~\ref{T:existenceY} when combined with Theorem~\ref{prop:representation_of_X}. To prove continuity of the solution we need the following lemma {which makes use of condition \eqref{eq:mucont}}.

\begin{lemma}\label{L:contuinuity Y}
	Let $Z_t =\int_0^t b_s ds + \int_0^t \sigma_s dW_s$, $0\leq t\leq T$, such that $b$ and $\sigma$ are progressively measurable and 
	$$ \sup_{t\leq T}\E\left[ |b_t|^4  \right] +   \sup_{t\leq T} \E\left[ |\sigma_t|^4  \right] < \infty.  $$
	Then, the process 
	$$ \widetilde Y_t(\theta) \; = \;  \int_0^t e^{-\theta(t-s)}dZ_s, \quad \theta \in \R_+,$$
	solution in the mild sense to
	\begin{align*}
	d\tilde Y_s & = \;  A^{mr} \tilde Y_s ds + dZ_s, \;\;\; \tilde Y_0 \; = \; 0, 
	\end{align*}
	admits a  continuous  modification in $L^1(\mu)$ and satisfies
	\eqref{eq:estimateY4}-\eqref{eq:estimateY4.2}.
\end{lemma}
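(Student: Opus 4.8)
The plan is to establish the three assertions in turn: the moment bound \eqref{eq:estimateY4}, the pathwise uniform bound \eqref{eq:estimateY4.2}, and the existence of a continuous $L^1(\mu)$-valued modification, the last being by far the hardest. For \eqref{eq:estimateY4} I would simply recognize that $\widetilde Y$ is the mild solution of \eqref{eq:SDE_infinite_dim} with the coefficients $\delta(s,\omega,y)(\theta)=b_s(\omega)$ and $\Sigma(s,\omega,y)(\theta)=\sigma_s(\omega)$, which depend neither on $y$ nor on $\theta$. These are progressively measurable, satisfy the growth bound \eqref{assumption:EDS_dim_inf_growth} with $\phi_s=|b_s|+|\sigma_s|$ (so $\sup_{s\le T}\E[|\phi_s|^4]<\infty$ by hypothesis) and the Lipschitz bound \eqref{assumption:EDS_dim_inf_lip} with constant $0$. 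Hence Theorem~\ref{T:existenceY}, applied with $p=4$ and $\xi\equiv0$, yields a unique solution satisfying \eqref{eq:estimateYLmu}; by uniqueness it coincides with $\widetilde Y$, and \eqref{eq:estimateYLmu} is exactly \eqref{eq:estimateY4}.

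For \eqref{eq:estimateY4.2} I would set $M_t=\int_0^t\sigma_s\,dW_s$, a continuous martingale with $\E[\sup_{t\le T}|M_t|^4]<\infty$ by Burkholder--Davis--Gundy, and integrate by parts in the stochastic integral:
\begin{align*}
\int_0^t e^{-\theta(t-s)}\sigma_s\,dW_s = M_t-\theta\int_0^t e^{-\theta(t-s)}M_s\,ds .
\end{align*}
Because $\theta\int_0^t e^{-\theta(t-s)}\,ds=1-e^{-\theta t}\le1$, the right-hand side is dominated by $2\sup_{s\le T}|M_s|$ uniformly in $(t,\theta)$. Together with the trivial drift bound $\big|\int_0^t e^{-\theta(t-s)}b_s\,ds\big|\le\int_0^T|b_s|\,ds$ this gives
\begin{align*}
\sup_{t\le T}\,\sup_{\theta\in\R_+}|\widetilde Y_t(\theta)|\le\int_0^T|b_s|\,ds+2\sup_{s\le T}|M_s|<\infty\quad\text{a.s.,}
\end{align*}
which is \eqref{eq:estimateY4.2}.

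For the continuity I would argue by truncation. For each fixed $\theta$ the map $t\mapsto\widetilde Y_t(\theta)=e^{-\theta t}\int_0^t e^{\theta s}\,dZ_s$ is a.s.\ continuous, and by Fubini this holds, for a.e.\ $\omega$, for $|\mu|$-almost every $\theta$. Setting $\widetilde Y^{(n)}_t=\widetilde Y_t\,\1_{\{\theta\le n\}}$ and noting that \eqref{eq:totalvar} forces $|\mu|([0,n])<\infty$, the bound \eqref{eq:estimateY4.2} provides the $|\mu|$-integrable dominating function $2\sup_{s\le T,\,\theta}|\widetilde Y_s(\theta)|\,\1_{\{\theta\le n\}}$, so dominated convergence shows that $t\mapsto\widetilde Y^{(n)}_t$ is a.s.\ continuous in $L^1(\mu)$. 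To conclude it then suffices to pass to the limit uniformly in $t$, i.e.\ to show
\begin{align*}
\sup_{t\le T}\,\|\widetilde Y_t-\widetilde Y^{(n)}_t\|_{L^1(\mu)}=\sup_{t\le T}\int_{\{\theta>n\}}|\widetilde Y_t(\theta)|\,|\mu|(d\theta)\ \longrightarrow\ 0\quad\text{a.s.}
\end{align*}
Since this tail is nonincreasing in $n$, I would control it in expectation via $\sup_t\int_{\theta>n}(\cdots)\le\int_{\theta>n}\sup_t|\widetilde Y_t(\theta)|\,|\mu|(d\theta)$, using the single-mode estimate $\E\big[\sup_t|\int_0^t e^{-\theta(t-s)}b_s\,ds|\big]\le c\,(1\wedge\theta^{-1/2})$ (Cauchy--Schwarz and the elementary bound behind \eqref{eq:inequalityexp}) together with a maximal estimate for the Ornstein--Uhlenbeck mode $\int_0^t e^{-\theta(t-s)}\sigma_s\,dW_s$, thereby reducing everything to the integrability against $|\mu|$ of a $\theta$-decaying bound.

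The main obstacle is precisely this maximal estimate. The pathwise supremum of a single Ornstein--Uhlenbeck mode decays only like $\theta^{-\rho}$ for $\rho<1/2$ (equivalently like $\theta^{-1/2}$ up to a logarithmic factor), whereas \eqref{eq:totalvar} controls only $\int(1\wedge\theta^{-1/2})|\mu|(d\theta)$, i.e.\ the exactly critical weight. For the kernels of interest — e.g.\ the fractional $\mu_H$, whose density carries the extra factor $\theta^{-H}$ — there is room to choose $\rho\in(1/2-H,1/2)$ and the tail estimate closes; but squeezing an $|\mu|$-integrable tail bound out of this borderline situation in full generality is where the argument must be handled with care, and I expect this step, rather than the two moment bounds, to carry the real content of the lemma.
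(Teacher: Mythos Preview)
Your treatment of \eqref{eq:estimateY4} and \eqref{eq:estimateY4.2} is correct; in fact your bound $\sup_{t,\theta}|\widetilde Y_t(\theta)|\le\int_0^T|b_s|\,ds+2\sup_{s\le T}|M_s|$ is slicker than the paper's. The gap is precisely where you locate it: to get continuity in $L^1(\mu)$ you need a $|\mu|$-integrable pathwise majorant for $\theta\mapsto\sup_{t\le T}|\widetilde Y_t(\theta)|$, and your proposed route via an \emph{expected} maximal inequality for a single OU mode sits at the critical exponent and, as you yourself acknowledge, does not close under \eqref{eq:totalvar} alone. So the proof is incomplete.

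The idea you are missing is to extract the $\theta$-decay \emph{pathwise}, from the H\"older regularity of the driver $Z$. By Kolmogorov--Chentsov, the fourth-moment hypothesis yields a version with $|Z_t-Z_s|\le c_{T,\zeta}(\omega)\,|t-s|^{\zeta}$ on $[0,T]$ for $\zeta\in(0,1/4)$. Feeding this into the \emph{same} integration-by-parts identity you used, written as $\widetilde Y_t(\theta)=e^{-\theta t}Z_t+\theta\int_0^t e^{-\theta u}(Z_t-Z_{t-u})\,du$, gives
\[
|\widetilde Y_t(\theta)|\ \le\ c_{T,\zeta}(\omega)\,\zeta\int_0^t e^{-\theta u}u^{\zeta-1}\,du,
\qquad\text{hence}\qquad
\sup_{t\le T}|\widetilde Y_t(\theta)|\ \le\ c_{T,\zeta}(\omega)\,\zeta\int_0^T e^{-\theta u}u^{\zeta-1}\,du.
\]
This random but $\theta$-explicit bound is claimed to lie in $L^1(|\mu|)$: by Fubini $\int_{\R_+}|\mu|(d\theta)\int_0^T e^{-\theta u}u^{\zeta-1}\,du=\int_0^T u^{\zeta-1}\bar K(u)\,du$, and one invokes Cauchy--Schwarz together with Lemma~\ref{L:L2kernel}. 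Dominated convergence then yields continuity of $t\mapsto\widetilde Y_t$ in $L^1(\mu)$ directly, with no truncation step. In short, the paper's H\"older-based estimate does double duty --- it proves \eqref{eq:estimateY4.2} \emph{and} supplies the integrable dominating function --- whereas your $\theta$-uniform bound, though perfectly valid for \eqref{eq:estimateY4.2}, is useless as a majorant when $|\mu|(\R_+)=\infty$.
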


\begin{proof}
	The  bound  \eqref{eq:estimateY4} follows along the lines of the estimates in step 1 of the proof of Theorem~\ref{T:existenceY} with $p=4$, for getting \eqref{eq:estimateYLmu}, by  successive applications of Jensen inequalities. Let us now show \eqref{eq:estimateY4.2} and  the continuity statement.  
	Fix $\theta\in \R_+$ and $t\leq T$. An integration by parts leads to 
	$$ \widetilde Y_t(\theta) = e^{-\theta t}Z_t + \theta  \int_0^t e^{-\theta(t-s)}(Z_t-Z_s) ds.$$ 
	The Kolmogorov--Chentsov continuity criterion, yields that 	for each $\zeta \in (0,1/4)$, the process $Z$ admits a version with $\zeta$-H\"older sample paths on $[0,T]$. We identify $Z$ with this version so that
	$$  |Z_t(\omega) - Z_s(\omega)| \leq   c_{T,\zeta}(\omega) |t-s|^{\zeta}, \quad s,t\leq T,$$
	for some $c_{T,\zeta}(\omega) \geq 0$. Using this inequality and another integration by parts yields
	\begin{align*}
	| \widetilde  Y_t(\theta,\omega)| &\leq \;    c_{T,\zeta}(\omega) e^{-\theta t}t^{\zeta}+  c_{T,\zeta}(\omega) \theta \int_0^t e^{-\theta u}u^{\zeta} du\\
	&=  \; c_{T,\zeta}(\omega) \zeta \int_0^t e^{-\theta u}u^{\zeta-1} du.
	\end{align*} 
	This  proves  \eqref{eq:estimateY4.2}.  Furthermore,
	$$\sup_{t\leq T}| \widetilde  Y_t(\theta,\omega)| \leq  c_{T,\zeta}(\omega) \zeta \int_0^T e^{-\theta u}u^{\zeta-1} du,$$
	where the right hand side is in $L^1(|\mu|)$  {for some $\zeta \in (0,1/4)$ by virtue of \eqref{eq:mucont}.} Since $t\mapsto \widetilde Y_t(\theta,\omega)$ is continuous for each 
	$\theta\in \R_+$, the dominated convergence theorem yields that the process $\widetilde Y$ is continuous in $L^1(\mu)$. 
\end{proof}

\begin{theorem}\label{P:strong}  	Let $g_0$ be continuous, $\beta,\gamma$ be bounded measurable functions on $[0,T]$ and $K$ be a kernel as in  \eqref{eq:cmmu} such that  \eqref{eq:totalvar}-\eqref{eq:mucont} hold.  Fix   an admissible control  $\alpha \in \mathcal A$. 
	The stochastic Volterra equation  \eqref{eq:sve} admits a unique continuous and adapted strong solution $X^{\alpha}$   such that \eqref{eq:estimate moment X} holds.
\end{theorem}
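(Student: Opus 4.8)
The plan is to transfer the equation to the lifted Markovian system \eqref{eq:yalpha} in $L^1(\mu)$, solve that system with the abstract existence result of Theorem~\ref{T:existenceY}, and then carry the solution back to \eqref{eq:sve} through the correspondence of Theorem~\ref{prop:representation_of_X}. Thus the proof is essentially an assembly of the three auxiliary results already established.

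For \emph{existence}, I would first cast \eqref{eq:yalpha} in the abstract form \eqref{eq:eqYinfini}, with $A^{mr}$ the mean-reverting operator \eqref{meanoperator}, initial datum $\xi=0\in L^1(\mu)$, and coefficients $\delta(s,y)(\theta)=\tilde b\big(s,\int_{\R_+}\mu(d\tau)y(\tau),\alpha_s\big)$ and $\Sigma(s,y)(\theta)=\tilde\sigma\big(s,\int_{\R_+}\mu(d\tau)y(\tau),\alpha_s\big)$, where $\tilde b,\tilde\sigma$ are the affine maps \eqref{eq:btilde}--\eqref{eq:sigmatilde}. These fall exactly into the template of Example~\ref{ex:coefficientssde} (take $b_0=\tilde\beta$, $\gamma_0=\tilde\gamma$ bounded, the constant operators $B_0\equiv B$, $D_0\equiv D$, $C_0\equiv C$, $F_0\equiv F$, and $\phi_s=\alpha_s$), so that the linear-growth and Lipschitz bounds \eqref{assumption:EDS_dim_inf_growth}--\eqref{assumption:EDS_dim_inf_lip} hold via \eqref{eq:examplecoeff1}--\eqref{eq:examplecoeff2}. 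Choosing $p=4$, the admissibility constraint $\sup_{s\leq T}\E[|\alpha_s|^4]<\infty$ built into $\Acal$ guarantees $\sup_{s\leq T}\E[|\phi_s|^4]<\infty$, and Theorem~\ref{T:existenceY} then yields a unique mild solution $Y^\alpha\in\mathcal S^4_{0,T}$. To upgrade $Y^\alpha$ to a path-continuous $L^1(\mu)$-valued process, I would write $Y^\alpha_t(\theta)=\int_0^t e^{-\theta(t-s)}dZ_s$ with $Z_t=\int_0^t \tilde b\big(s,\int_{\R_+}\mu(d\tau)Y^\alpha_s(\tau),\alpha_s\big)ds+\int_0^t\tilde\sigma(\cdots)dW_s$, and check that the drift and diffusion coefficients of $Z$ have finite fourth moments uniformly in $t$; this holds because $\tilde\beta,\tilde\gamma$ are bounded, $\big|\int_{\R_+}\mu(d\tau)Y^\alpha_s(\tau)\big|\leq\|Y^\alpha_s\|_{L^1(\mu)}$ with $Y^\alpha\in\mathcal S^4_{0,T}$, and $\alpha\in\Acal$. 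Lemma~\ref{L:contuinuity Y} then supplies a continuous modification of $Y^\alpha$ in $L^1(\mu)$ together with \eqref{eq:estimateY4}--\eqref{eq:estimateY4.2}. With this continuous $Y^\alpha$ solving \eqref{eq:Yintegmu}, the converse part of Theorem~\ref{prop:representation_of_X} shows that $X^\alpha_t:=g_0(t)+\int_{\R_+}\mu(d\theta)Y^\alpha_t(\theta)$ is a continuous adapted solution of \eqref{eq:sve} satisfying \eqref{eq:estimate moment X}.

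For \emph{uniqueness} I would reverse the correspondence: given any two continuous adapted solutions of \eqref{eq:sve} obeying \eqref{eq:estimate moment X}, the forward direction of Theorem~\ref{prop:representation_of_X} attaches to each the process \eqref{eq:defY}, and since $b\big(s,g_0(s)+\int_{\R_+}\mu(d\theta)Y_s(\theta),\alpha_s\big)=\tilde b\big(s,\int_{\R_+}\mu(d\theta)Y_s(\theta),\alpha_s\big)$ (and likewise for $\sigma$) these two processes solve the \emph{same} abstract mild equation \eqref{eq:Yintegmu} with identical data. The uniqueness in Theorem~\ref{T:existenceY} then forces them to coincide in $\mathcal S^4_{0,T}$, and the representation \eqref{eq:repX} together with path-continuity gives equality of the two $X$-processes for all $t\leq T$, $\P$-a.s.

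I expect essentially no genuine obstacle, since all the analytic work sits in Theorems~\ref{T:existenceY} and~\ref{prop:representation_of_X} and Lemma~\ref{L:contuinuity Y}. The one point requiring care is the bookkeeping between the original coefficients $(\beta,\gamma)$ and the shifted ones $(\tilde\beta,\tilde\gamma)=(\beta+Bg_0,\gamma+Dg_0)$ that absorb $g_0$ into the lifted dynamics, and the consistent use of $p=4$ throughout: it is precisely the fourth-moment requirement in $\Acal$ that simultaneously drives the application of Theorem~\ref{T:existenceY} and the continuity step of Lemma~\ref{L:contuinuity Y}, and therefore delivers \eqref{eq:estimate moment X} rather than a mere second-moment bound.
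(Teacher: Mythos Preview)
Your proposal is correct and follows essentially the same route as the paper: apply Theorem~\ref{T:existenceY} via Example~\ref{ex:coefficientssde} with the coefficients $b_0=\tilde\beta$, $\gamma_0=\tilde\gamma$, $B_0\equiv B$, $C_0\equiv C$, $D_0\equiv D$, $F_0\equiv F$ to obtain existence, uniqueness and the moment bound for the lifted process, then invoke Lemma~\ref{L:contuinuity Y} and the converse part of Theorem~\ref{prop:representation_of_X} for continuity of $X^\alpha$. Your write-up is simply more explicit than the paper's, in particular on the uniqueness direction (lifting two solutions via the forward part of Theorem~\ref{prop:representation_of_X} and appealing to uniqueness in $\mathcal S^4_{0,T}$), which the paper leaves implicit.
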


\begin{proof} Existence, uniqueness and  \eqref{eq:estimate moment X} are straightforward from Theorem~\ref{T:existenceY} combined with Theorem~\ref{prop:representation_of_X} and Example~\ref{ex:coefficientssde} for the coefficients \begin{align*}
	b_0(s,\theta)&= \;  \beta(s) + Bg_0(s),  \quad  B_0(s,\theta,\tau)= B,  \quad C_0(s,\theta,\tau)=C,\\
	\gamma_0(s,\theta)&= \;  \gamma(s) + Dg_0(s),   \quad D_0(s,\theta,\tau)=D,   \quad F_0(s,\theta,\tau)=F.
	\end{align*}
	The statement concerning the continuity of $X^{\alpha}$ is a direct consequence of the continuity of $Y^{\alpha}$  established in  Lemma~\ref{L:contuinuity Y} and the converse  direction in Theorem~\ref{prop:representation_of_X}.
\end{proof}

\section{A martingale verification theorem}\label{S:verification}

We first derive an It\^o formula for quadratic functions in $L^1(\mu)$, where we fix  a $d\times d'$-matrix measure $\mu$ on $\R_+$ such that   $|\mu|$ is   $\sigma$-finite.

For fixed $t$ $\in$ $[0,T]$,  let  $\widetilde \Gamma,\widetilde \Lambda \in  C([t,T],L^1(\mu\otimes \mu))\times C([t,T],L^1( \mu^\top))$ be solutions to 
\begin{align}
\dot   {{\widetilde\Gamma}}_{s}(\theta,\tau) &=\;  (\theta+\tau)\tilde\Gamma_{s}(\theta,\tau) -  R^1_s(\theta,\tau), &\quad   t\leq s \leq T,   \quad  \mu\otimes \mu-a.e. \\
\dot   {{\widetilde\Lambda}}_{s}(\theta) &=\;   \theta \widetilde\Lambda_s(\theta)  - R^2_s(\theta), &  t\leq s \leq T,   \quad  \mu-a.e. \label{eq:tildeLambda}
\end{align}
{with $\tilde\Gamma_T$ $\in$ $L^\infty(\mu\otimes\mu)$, $\widetilde\Lambda_T$ $\in$ $L^\infty(\mu^\top)$},  and 
for some measurable functions $s$ $\mapsto$ $R_s^1$ and $R_s^2$ valued respectively in $L^{\infty}(\mu\otimes\mu)$ and  $L^{\infty}(\mu^\top)$, such that 
\begin{align} \label{integR}
\int_t^T  
\|R^1_s\|_{L^\infty(\mu\otimes\mu)} ds + \int_t^T \|R^2_s\|_{L^\infty(\mu^\top)}ds 
& < \infty.    
\end{align}
Recall  that $\widetilde\Gamma_s$ $\in$ $L^\infty(\mu\otimes\mu)$, $\widetilde\Lambda_s$ $\in$ $L^\infty(\mu^\top)$, for all $t\leq s\leq T$.

\begin{lemma} \label{L:ito} 
	Fix $t$ $\in$ $[0,T]$, and let $\widetilde Y$  be a $L^1(\mu)$-valued progressively measurable processes  solution in the mild sense to
	\begin{align} \label{eq:tildeYmild}
	d\tilde Y_s & = \;  A^{mr} \tilde Y_sds +  b_s ds + \sigma_s dW_s, \quad t \leq s \leq T,  \quad  \tilde Y_t = \xi,
	\end{align}  
	for  $\xi\in L^1(\mu)$ and some progressively measurable $b,\sigma$ valued in $L^\infty(\mu)$ and satisfying 
	\begin{align} \label{integbsig} 
	\int_t^T \| b_s \|_{L^\infty(\mu)} ds + \int_t^T \|\sigma_s\|_{L^\infty(\mu)}^2 ds & < \;\infty, \quad  \P-a.s. 
	\end{align}
	Assume that $\widetilde Y$ is {bounded in $s$ $\in$ $[t,T]$,  $\P\otimes\mu$-a.e,   and has continuous sample paths in $L^1(\mu)$.}  
	Then, given $\tilde\Gamma$, $\tilde\Lambda$ as in   \eqref{eq:tildeLambda}, the processes
	\begin{align*}
	U_s^1 & =  \langle \widetilde Y_s,  \boldsymbol{\widetilde\Gamma_s} \widetilde Y_s \rangle_\mu {\; = \;  \langle  \boldsymbol{\widetilde\Gamma_s} \widetilde Y_s, \widetilde Y_s \rangle_{\mu^\top} 
		\quad\quad  U_s^2 \; = \;   \langle  \tilde Y_s , \tilde\Lambda_s\rangle_{\mu} \: = \; 
		\langle \widetilde\Lambda_s,\widetilde Y_s \rangle_{\mu^\top} }, 
	\quad t \leq s \leq T, 
	\end{align*}
	are well defined, where $\boldsymbol{\widetilde\Gamma}$ is the integral operator associated to the kernel $\widetilde\Gamma$. 
	Furthermore, we have for $i$ $=$ $1,2$, 
	\begin{align}  \label{eq:ItoU2}
	dU_s^i & = \;  \Delta_s^i ds +  \Sigma_s^i dW_s, \quad \quad t \leq s \leq T, 
	\end{align}
	where 
	\begin{align}
	\Delta_s^1 & =  - \langle \widetilde Y_s , \boldsymbol{R_s^1} \widetilde Y_s\rangle_\mu  +  \langle \boldsymbol{\widetilde\Gamma_s} \sigma_s, \sigma_s  \rangle_{\mu^\top} 
	+  \langle \boldsymbol{\widetilde\Gamma_s} \widetilde Y_s, b_s  \rangle_{\mu^\top}  + \langle \boldsymbol{\widetilde\Gamma_s} b_s, \widetilde Y_s  \rangle_{\mu^\top}, \\
	\Sigma_s^1  &=   \langle \boldsymbol{\widetilde\Gamma_s} \widetilde Y_s, \sigma_s  \rangle_{\mu^\top} +   
	\langle \boldsymbol{\widetilde\Gamma_s} \sigma_s, \widetilde Y_s  \rangle_{\mu^\top}, \\
	\Delta_s^2 & = -   \langle \widetilde Y_s, R_s^2   \rangle_{\mu}   + \langle  \widetilde \Lambda_s, b_s   \rangle_{\mu^\top}, \quad \quad \Sigma_s^2  
	\;  =  \langle  \widetilde\Lambda_s  , \sigma_s \rangle_{\mu^\top}  ,
	\end{align}
	where $\boldsymbol{R^1}$ is the integral operator associated to the kernel $R^1$. 
\end{lemma}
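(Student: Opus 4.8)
The plan is to establish an It\^o formula for the quadratic functional $U^1_s = \langle \widetilde Y_s, \boldsymbol{\widetilde\Gamma_s}\widetilde Y_s\rangle_\mu$ and the linear functional $U^2_s = \langle \widetilde Y_s, \widetilde\Lambda_s\rangle_\mu$ in the non-reflexive Banach space $L^1(\mu)$, where the chief difficulty is that $\widetilde Y$ solves its equation only in the mild sense and $A^{mr}$ is an unbounded operator, so we cannot differentiate the bilinear forms na\"ively along the dynamics. My overall strategy is a smoothing/regularization argument: approximate $\widetilde Y$ by the strong (classical) solutions $\widetilde Y^\varepsilon$ obtained from a Yosida-type approximation $A^{mr}_\varepsilon = A^{mr}(I-\varepsilon A^{mr})^{-1}$ of the mean-reverting generator (concretely, $(A^{mr}_\varepsilon\varphi)(\theta) = -\tfrac{\theta}{1+\varepsilon\theta}\varphi(\theta)$, which is a bounded multiplication operator on $L^1(\mu)$), apply the ordinary finite-variation/It\^o product rule to the regularized processes where everything is classically differentiable, and then pass to the limit $\varepsilon\to0$.

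First I would verify that $U^1$ and $U^2$ are well defined: since $\widetilde Y_s\in L^1(\mu)$ with $\widetilde Y$ bounded $\P\otimes\mu$-a.e.\ and $\widetilde\Gamma_s\in L^\infty(\mu\otimes\mu)$, the integral operator $\boldsymbol{\widetilde\Gamma_s}$ maps $L^1(\mu)$ into $L^\infty(\mu^\top)$ by the discussion following \eqref{meanoperator}, so the dual pairing is finite, and symmetry of $\widetilde\Gamma_s$ gives the two stated representations of each $U^i_s$; similarly $\widetilde\Lambda_s\in L^\infty(\mu^\top)$ (recall Remark~\ref{R:Linfty}) makes $U^2$ well defined. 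Next, for the regularized system I would use the bounded-operator It\^o formula: writing $d\widetilde Y^\varepsilon_s = (A^{mr}_\varepsilon\widetilde Y^\varepsilon_s + b_s)ds + \sigma_s dW_s$ as a genuine $L^1(\mu)$-valued It\^o process with bounded drift operator, the product rule yields
\begin{align*}
dU^{1,\varepsilon}_s &= \langle \dot{\widetilde\Gamma}_s\widetilde Y^\varepsilon_s,\widetilde Y^\varepsilon_s\rangle_{\mu^\top} ds
+ 2\langle \boldsymbol{\widetilde\Gamma_s}(A^{mr}_\varepsilon\widetilde Y^\varepsilon_s + b_s),\widetilde Y^\varepsilon_s\rangle_{\mu^\top} ds \\
&\quad + \langle \boldsymbol{\widetilde\Gamma_s}\sigma_s,\sigma_s\rangle_{\mu^\top} ds
+ 2\langle \boldsymbol{\widetilde\Gamma_s}\widetilde Y^\varepsilon_s,\sigma_s\rangle_{\mu^\top} dW_s,
\end{align*}
and the crucial cancellation is that the two $A^{mr}$-terms combine with the time-derivative of the kernel: substituting $\dot{\widetilde\Gamma}_s(\theta,\tau) = (\theta+\tau)\widetilde\Gamma_s(\theta,\tau) - R^1_s(\theta,\tau)$ from \eqref{eq:tildeLambda} exactly annihilates the factor $(\theta+\tau)$ produced by the symmetric pairing of $\boldsymbol{\widetilde\Gamma_s}$ with $A^{mr}$, leaving the clean drift $-\langle\widetilde Y_s,\boldsymbol{R^1_s}\widetilde Y_s\rangle_\mu$ plus the $b_s,\sigma_s$ contributions recorded in the statement. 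The analogous, simpler computation using \eqref{eq:tildeLambda} handles $U^2$.

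The main obstacle will be the passage to the limit $\varepsilon\to0$, since the commutator of $\boldsymbol{\widetilde\Gamma_s}$ with the unbounded $A^{mr}$ produces the factor $(\theta+\tau)\widetilde\Gamma_s(\theta,\tau)$, which is not in $L^\infty(\mu\otimes\mu)$ in general; the resolution is that this divergent piece is precisely matched by $\dot{\widetilde\Gamma}_s$, so one must take the limit only after the cancellation rather than term by term. Concretely I would argue that $\widetilde Y^\varepsilon_s\to\widetilde Y_s$ in $L^1(\mu)$ (uniformly in $s$, in expectation) using the contraction estimates from the proof of Theorem~\ref{T:existenceY} applied to the regularized generator, that the already-cancelled drift and diffusion integrands converge because they involve only $R^1_s, R^2_s\in L^\infty$, $b_s,\sigma_s\in L^\infty(\mu)$, $\widetilde\Gamma_s\in L^\infty(\mu\otimes\mu)$ and the continuous, $\P\otimes\mu$-a.e.\ bounded $\widetilde Y$, and that the integrability hypotheses \eqref{integR}--\eqref{integbsig} allow dominated convergence for the $ds$-integrals and an It\^o-isometry/BDG argument for the stochastic integral. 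Finally, using continuity of $s\mapsto\widetilde Y_s$ in $L^1(\mu)$ together with continuity of the kernels in their respective spaces, I would identify the limiting decomposition with \eqref{eq:ItoU2} and read off $\Delta^i_s,\Sigma^i_s$, taking care that each pairing is interpreted in the correct direction (either $L^1$--$L^\infty$ with $\mu$ or $L^\infty$--$L^1$ with $\mu^\top$) as dictated by whether the operator outputs an $L^\infty(\mu^\top)$ or an $L^1(\mu^\top)$ element.
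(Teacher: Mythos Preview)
Your Yosida-regularization strategy is plausible in spirit and correctly identifies the key cancellation between $\dot{\widetilde\Gamma}_s$ and the $A^{mr}$-contributions, but it is substantially more involved than the paper's argument, and one step is not quite justified as written. The paper exploits the fact that $A^{mr}$ is a \emph{multiplication} operator: for each fixed $\theta\in\R_+$, the mild equation \eqref{eq:tildeYmild} is already a genuine finite-dimensional It\^o SDE
\[
d\widetilde Y_s(\theta) = \big(-\theta\,\widetilde Y_s(\theta)+b_s(\theta)\big)\,ds + \sigma_s(\theta)\,dW_s,
\]
and likewise $\widetilde\Lambda_s(\theta)$ solves the scalar ODE $d\widetilde\Lambda_s(\theta)=(\theta\widetilde\Lambda_s(\theta)-R^2_s(\theta))\,ds$. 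One then applies the classical one-dimensional It\^o product rule to $s\mapsto\widetilde\Lambda_s(\theta)^\top h(\theta)\widetilde Y_s(\theta)$ (with $h$ the Radon--Nikodym density in $\mu(d\theta)=h(\theta)|\mu|(d\theta)$), observes that the $-\theta$ and $+\theta$ terms cancel \emph{pointwise}, and finally integrates in $\theta$ against $|\mu|$ via stochastic Fubini. No regularization is needed, because the ``unboundedness'' of $A^{mr}$ is only in the $\theta$-direction and disappears once $\theta$ is frozen; the same argument handles $U^1$ with $(\theta,\tau)$ fixed.

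The weakest link in your route is the invocation of a ``bounded-operator It\^o formula'' for $L^1(\mu)$-valued processes: $L^1$-spaces lack the geometric properties (UMD, M-type~$2$) under which Banach-space stochastic calculus is developed, so such a formula is not available off the shelf even after the Yosida approximation makes the drift bounded. You could repair this by computing $U^{1,\varepsilon}$ and $U^{2,\varepsilon}$ pointwise in $\theta$ (or $(\theta,\tau)$) and then integrating, but at that point the regularization is superfluous, since the same pointwise argument works directly for $\widetilde Y$ itself. In short, the paper's $\theta$-by-$\theta$ approach is both simpler and sidesteps the Banach-space subtlety; your route can be salvaged, but only by effectively reducing to the paper's.
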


\begin{proof} We illustrate the proof only for $U^2$, that of $U^1$ follows along the same lines.  The idea is to apply It\^o's formula {for each $\theta$}.   By virtue of the inequality $|\mu(B)| \leq |\mu|(B)$, for any Borel set $B$, and  the $\sigma$-finiteness of $|\mu|$, an application of the Radon--Nikodym theorem yields the existence of a measurable function $h:\R_+ \to \R^{d\times d'}$  such that 
	\begin{align}\label{eq:RadonN}
	\mu(d\theta) &= h(\theta) |\mu|(d\theta)
	\end{align}
	with  $|h(\theta)|=1$, for all $\theta\in \R^d$,  and $ |h| \in L^1(|\mu|)$, see for instance \cite[Lemma 3.5.9]{GLS:90}. Recall that by definition of a mild solution to 
	\eqref{eq:tildeYmild}, we mean that 
	\begin{align}
	\widetilde Y_s(\theta) &=  e^{-\theta(s-t)}\xi (\theta) +\int_t^s e^{-\theta(s-u)} b_u (\theta) du \\
	&\quad +\int_t^s e^{-\theta(s-u)}\sigma_u(\theta) dW_u,  \quad t \leq s \leq T, \quad \P\otimes \mu-a.e. \label{eq:tildeY}
	\end{align}
	Fix $t\leq T $,  and observe that the solution   $\widetilde\Lambda$ to \eqref{eq:tildeLambda} is given by 
	\begin{align*}
	\widetilde\Lambda_s(\theta) &= e^{-\theta(T-s)} \widetilde\Lambda_T(\theta) + \int_s^T e^{-\theta(u-t)} R_u^2(\theta) du, \quad t \leq s\leq T, \quad  \mu-a.e.,
	\end{align*} 
	which implies, with \eqref{integR}, that $\widetilde\Lambda_s$ $\in$ $L^\infty(\mu^\top)$, $t\leq s\leq T$, together with the boundedness of $s\mapsto\widetilde\Lambda_s$: 
	\begin{align} \label{Lambdabor} 
	\sup_{t\leq s\leq T} \big|\widetilde\Lambda_s(\theta)\big|  & \leq \;  \big\|\widetilde\Lambda_T\big\|_{L^\infty(\mu^\top)}     +    \int_t^T  \|R_u^2\|_{L^\infty(\mu^\top)}   du \; < \; \infty, 
	\quad  \mu-a.e. 
	\end{align}
	Moreover, since $\widetilde Y$ is bounded in $s$, we have 
	\begin{align} \label{Ybor}
	\sup_{t\leq s\leq T} |\widetilde Y_s(\theta)| & <  \infty, \quad \P\otimes\mu-a.e.
	\end{align}
	Define the $\P\otimes \mu$-nullset 
	$$\mathcal N=\{(\omega,\theta): \mbox{such that  either  \eqref{eq:tildeY} or  \eqref{eq:tildeLambda} or \eqref{Lambdabor} or \eqref{Ybor} does not hold} \}.$$
	Let $(\omega,\theta) \in \Omega\times \R_+ \setminus \mathcal N$ and observe that $s\mapsto \widetilde Y_s(\theta,\omega)$ and 
	$s\mapsto \widetilde \Lambda_s(\theta)$ solve:
	\begin{align*}
	d\widetilde Y_s(\theta,\omega) &= \left(- \theta \widetilde Y_s(\theta,\omega) +  b_s(\theta,\omega)\right)ds + \sigma_s(\theta,\omega)dW_s,\\
	d\widetilde\Lambda_s(\theta) &=  \left( \theta  \widetilde\Lambda_s(\theta) - R^2_s(\theta)  \right) ds. 
	\end{align*} 
	An application of It\^o's formula to the process $u^2(\theta,\omega):s\mapsto \widetilde \Lambda_s(\theta)^\top h(\theta)\widetilde Y_s(\theta)$ gives
	\begin{align}\label{eq:eq:tempIto}
	u^2_s(\theta,\omega) &= u^2_t(\theta,\omega) + \int_t^s \delta^2_u(\theta,\omega) du  + \int_t^s \widetilde \Lambda_u(\theta)^\top h(\theta) 
	\sigma_u(\theta,\omega) dW_u,\quad t\leq s, 
	\end{align} 
	with 
	$$ \delta_u^2(\theta,\omega) \; = \; -R^2_u(\theta)^\top h(\theta) \widetilde Y_u(\theta,\omega)  + \widetilde \Lambda_{u}(\theta)^\top h(\theta)b_u(\theta,\omega).$$
	All the quantities appearing on the right hand side of \eqref{eq:eq:tempIto} are well-defined thanks to the integrability assumptions \eqref{integbsig}-\eqref{integR} on the coefficients 
	$(b,\sigma,R^2)$ and the boundedness in $s$ of  $(\widetilde \Lambda_s,\widetilde Y_s)$ from \eqref{Lambdabor}-\eqref{Ybor}.
	Whence, \eqref{eq:eq:tempIto} holds $\P \otimes \mu$ almost everywhere.
	{Next, by the boundedness (resul\-ting from the continuity) of $s$ $\mapsto$ $\widetilde Y_s$ in $L^1(\mu)$, $s$ $\mapsto$ $\widetilde\Lambda_s$ in $L^1(\mu^\top)$,  
		and again by the integrability conditions \eqref{integbsig}-\eqref{integR}  on  $(b,\sigma,R^2)$}, all the terms appearing in \eqref{eq:eq:tempIto} are in $L^1(|\mu|)$ so that an integration with respect to the $\theta$ variable against $|\mu|$ combined with the identity  \eqref{eq:RadonN} and the stochastic Fubini's theorem, see \cite[Theorem~2.2]{V:12}, lead to \eqref{eq:ItoU2}.
\end{proof}

The next theorem establishes a martingale verification result for the possibly non-Markovian optimization problem  \eqref{eq:sve}-\eqref{eq:main problem}.
\begin{theorem}
	\label{thm:verification}
	Let $\beta,\gamma$ be bounded functions on $[0,T]$, $g_0$ continuous. Fix  $K,\mu$ as in  \eqref{eq:cmmu}-\eqref{eq:totalvar}-\eqref{eq:mucont}.
	Assume that:
	\begin{enumerate}
		\item \label{verifi}
		there exists a solution $(\Gamma,\Lambda,\chi) \in C([0,T],L^1(\mu\otimes \mu))\times C([0,T],L^1( \mu^\top)) \times C([0,T],\R)$ such that \eqref{eq:Riccati_monotone_kernel_mild}, \eqref{eq:BSDE_monotone_kernel_mild}, \eqref{eq:ODE_monotone_kernel_mild}   hold,  and {$\hat N(\Gamma_t) \in \S^m_+$, recall \eqref{eq:hatN}}, for all $t\leq T$, together with the estimate 
		\eqref{eq:estimateRiccati},
		\item  \label{verifiii}
		there exists an admissible  control process  $\alpha^* \in \mathcal A$ such that  \eqref{eq:optimalcontrol_monotone} holds for all $t\leq T$.
	\end{enumerate}
	Then, $\alpha^*$ is an optimal control,  $Y^{\alpha^*}$ given by \eqref{eq:yalpha} is the optimally controlled trajectory of the state variable and   $V^{\alpha^*}$  given by \eqref{eq:valuefunY} is the optimal value process  of the problem, in the sense that  	\eqref{eq:optimalvalue_monotone} holds, for all $t\leq T$. 
\end{theorem}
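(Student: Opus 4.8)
The plan is to carry out a martingale verification argument based on the It\^o formula of Lemma~\ref{L:ito} combined with a completion of squares. Fix an arbitrary $\alpha \in \Acal$ with associated lifted state $Y^\alpha$ solving \eqref{eq:yalpha}, and consider the candidate value process $V^\alpha$ defined in \eqref{eq:valuefunY}, which I write as $V_s^\alpha = U_s^1 + 2U_s^2 + \chi_s$ with $U_s^1 = \langle Y_s^\alpha, \boldsymbol{\Gamma_s} Y_s^\alpha\rangle_\mu$ and $U_s^2 = \langle Y_s^\alpha, \Lambda_s\rangle_\mu$. The goal is to prove that
\[
M_t^\alpha \; := \; V_t^\alpha + \int_0^t f(X_s^\alpha, \alpha_s)\, ds
\]
is a submartingale for every $\alpha$, and a genuine martingale precisely for $\alpha = \alpha^*$, which together with the terminal condition $V_T^\alpha = 0$ (since $\Gamma_T = \Lambda_T = 0$ and $\chi_T = 0$) will yield \eqref{eq:optimalvalue_monotone}.

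First I would check that the hypotheses of Lemma~\ref{L:ito} hold along $Y^\alpha$. By Theorem~\ref{prop:representation_of_X}, $Y^\alpha$ is continuous in $L^1(\mu)$ and bounded $\P\otimes\mu$-a.e.\ in view of \eqref{eq:estimateY4.2}; the driving coefficients $b_s = b(s, X_s^\alpha, \alpha_s)$ and $\sigma_s = \sigma(s, X_s^\alpha, \alpha_s)$ are constant in $\theta$, hence lie in $L^\infty(\mu)$, and satisfy the integrability \eqref{integbsig} by boundedness of $\beta,\gamma$ together with the moment estimate \eqref{eq:estimate moment X} and $\alpha \in \Acal$. Taking $R^1_s = \mathcal R_1(\Gamma_s)$ and $R^2_s = \mathcal R_2(s, \Gamma_s, \Lambda_s)$, the requirement \eqref{integR} follows from Remark~\ref{R:Linfty} and the estimate \eqref{eq:estimateRiccati}. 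Lemma~\ref{L:ito} then gives the drifts of $U^1$ and $U^2$, and adding the running cost \eqref{fquadra} (expressed through $X_s^\alpha = g_0(s) + \int_{\R_+}\mu(d\tau)Y_s^\alpha(\tau)$) produces an explicit drift $\Delta_s^1 + 2\Delta_s^2 + \dot\chi_s + f(X_s^\alpha, \alpha_s)$ for $M^\alpha$.

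The core of the argument, and the step I expect to be the main obstacle, is to show that this drift collapses to the perfect square
\[
\big(\alpha_s - \alpha_s^*\big)^\top \hat N(\Gamma_s)\,\big(\alpha_s - \alpha_s^*\big), \qquad \alpha_s^* = -\hat N(\Gamma_s)^{-1}\Big(h(s, \Gamma_s, \Lambda_s) + \int_{\R_+} S(\Gamma_s)(\theta)\mu(d\theta) Y_s^\alpha(\theta)\Big),
\]
by matching, term by term, the quadratic, linear and constant contributions (in both $Y_s^\alpha$ and $\alpha_s$) against the definitions \eqref{eq:R1}--\eqref{eq:R3} of $\mathcal R_1,\mathcal R_2,\mathcal R_3$ and those of $S,\hat N,h$ in \eqref{eq:hatN}. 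Concretely, the diffusion term $\langle \boldsymbol{\Gamma_s}\sigma_s,\sigma_s\rangle_{\mu^\top}$ supplies the quadratic-in-$\alpha$ contribution $\alpha_s^\top F^\top \big(\int_{\R_+^2}\mu(d\theta)^\top\Gamma_s(\theta,\tau)\mu(d\tau)\big)F\alpha_s$, which combines with the $\alpha_s^\top N\alpha_s$ of the running cost into $\alpha_s^\top\hat N(\Gamma_s)\alpha_s$; the $-\langle Y_s^\alpha,\boldsymbol{R_s^1}Y_s^\alpha\rangle_\mu$ term cancels the $Q$-quadratic in $\int_{\R_+}\mu\,Y_s^\alpha$ coming from $(X_s^\alpha)^\top Q X_s^\alpha$ and, crucially, reinstates the feedback quadratic built from $S(\Gamma_s)^\top\hat N(\Gamma_s)^{-1}S(\Gamma_s)$; the linear pieces carried by $\Lambda$ and the inhomogeneities $\tilde\beta,\tilde\gamma,L,g_0$ are absorbed by $\mathcal R_2$ and $h$, while the remaining scalar terms match $\dot\chi_s = -\mathcal R_3$. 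Since $\hat N(\Gamma_s)\in\S^m_+$ by assumption, this square is nonnegative, vanishing exactly when $\alpha_s$ equals the feedback \eqref{eq:optimalcontrol_monotone}; by assumption $\alpha^*$ realizes this, so $M^{\alpha^*}$ has zero drift while $M^\alpha$ has a nonnegative drift for general $\alpha$.

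It remains to pass from the local to the genuine (sub)martingale property and to take conditional expectations. Using $\Gamma_s,\Lambda_s \in L^\infty$ (Remark~\ref{R:Linfty}), the bound \eqref{eq:estimateY4} on $Y^\alpha$, and the $L^4$-integrability of $\alpha$ and $X^\alpha$ built into $\Acal$ and \eqref{eq:estimate moment X}, I would verify that the martingale part $\int_0^\cdot (\Sigma_s^1 + 2\Sigma_s^2)\, dW_s$ is a true martingale and that $f(X_s^\alpha,\alpha_s)$ is integrable, so that conditioning at time $t$ and using $V_T^\alpha=0$ gives $V_t^\alpha \le \E\big[\int_t^T f(X_s^\alpha, \alpha_s)\, ds \mid \Fc_t\big]$ for every $\alpha$, with equality for $\alpha^*$. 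Finally, for $\alpha \in \Acal_t(\alpha^*)$ one has $Y_t^\alpha = Y_t^{\alpha^*}$ and hence $V_t^\alpha = V_t^{\alpha^*}$, so taking the infimum over such $\alpha$ and using that $\alpha^*$ attains equality yields \eqref{eq:optimalvalue_monotone}; the choice $t=0$ shows that $\alpha^*$ is optimal and $Y^{\alpha^*}$ is the optimally controlled trajectory.
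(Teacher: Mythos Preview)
Your proposal is correct and follows essentially the same approach as the paper: verify the hypotheses of Lemma~\ref{L:ito}, compute the drift of $V^\alpha + \int_0^\cdot f(X_s^\alpha,\alpha_s)\,ds$, complete the square to identify it as $(\alpha_s-\mathcal T_s(\alpha))^\top\hat N(\Gamma_s)(\alpha_s-\mathcal T_s(\alpha))$, and upgrade the local martingale part to a true martingale via the $L^4$ moment bounds. The only cosmetic difference is that the paper subtracts the quadratic penalty from the outset so that the resulting process $M^\alpha$ is a martingale for \emph{every} $\alpha$ (and then reads off \eqref{eq:tempalpha} directly), whereas you keep the square in the drift and phrase it as a submartingale/martingale dichotomy; note also that what you denote $\alpha_s^*$ in the square depends on the running trajectory $Y^\alpha$, which the paper writes as $\mathcal T_s(\alpha)$ to avoid notational overload.
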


\begin{proof}
	\textit{Step 1.} For any  $\alpha \in \Acal$,  we know from Theorem  \ref{P:strong} that there exists a continuous  solution $X^{\alpha}$ to \eqref{eq:sve} such that   \eqref{eq:estimate moment X} holds. Let us then  define the  continuous process
	\begin{align*}
	M^{\alpha}_t &= 	\int_0^t {f(X_s^{\alpha},\alpha_s)}ds + V_t^{\alpha} -\int_0^t  (\c_s - {\mathcal T_s(\alpha)})^\T \Hat{N}(\Gamma_s) (\c_s - {\mathcal T_s(\alpha)}) d s, \\
	{\mathcal T_s(\alpha)}&= {-\hat{N}(\Gamma_{s})^{-1} \Big( h(s,\Gamma_{s}, \Lambda_{s}) + \int_{\R_+} S(\Gamma_{s})(\theta) \mu(d\theta) Y^{\alpha}_s(\theta)  \Big)},
	\end{align*}
	where 
	$f$ is the quadratic function in \eqref{fquadra}, $V^\alpha$ is the process given by \eqref{eq:valuefunY}  from the solution $(\Gamma,\Lambda,\chi)$ to the Riccati equation  \eqref{eq:Riccatis_monotone}, and $\hat N$ by \eqref{eq:hatN}, recall Remark~\ref{R:continuityV}. 
	The main point is to check that $M^{\alpha}$ is martingale for any $\alpha$ $\in$ $\Acal$. 
	Indeed, if this is the case, then, for each $t\leq T$, the equality 
	$\E[M^{\alpha}_{T}| \Fc_t]=M^{\alpha}_{t}$ leads to
	\begin{align}\label{eq:tempalpha}
	J_t(\alpha)  - V_t^\alpha & = \E\left[ \int_t^{T}  (\c_s - {\mathcal T_s(\alpha)})^\T \Hat{N}(\Gamma_s) (\c_s - {\mathcal T_s(\alpha)} ) d s  \Mid \cali{F}_t \right],
	\end{align}
	where we have set $J_t(\alpha)$ $=$ $\E\left[\int_{t}^{T} f(X_s^{\alpha},\alpha_s)ds \Mid \cali{F}_t \right]$, and used $V^{\alpha}_T=0$, due to the vanishing terminal conditions of $(\Gamma,\Lambda,\Theta)$ and the  continuity of $V^{\alpha}$. 
	{Since $\hat N(\Gamma_s) \in \S^m_+$, the right hand} side of \eqref{eq:tempalpha} is always nonnegative and {vanishes for $\alpha=\alpha^*$, where $\alpha^*$ is given by \eqref{eq:optimalcontrol_monotone}}.  Fix now $t \leq T$,  and observe  that $V_t^{\alpha^*}$ $=$ $V_t^{\alpha'}$ for all 
	$\alpha'$ $\in$ $\Acal_t(\alpha^*)$. We then deduce from \eqref{eq:tempalpha}  that
	\begin{align*}
	V_t^{\alpha^*} & = J_t(\alpha^*) \; = \; \inf_{\alpha'\in\Acal_t(\alpha^*)} J_t(\alpha'), 
	\end{align*}
	which is the  relation \eqref{eq:optimalvalue_monotone}, and  shows that $\alpha^*$ is an optimal control. 
	
	\vspace{1mm}

	\textit{Step 2.} We now prove that $M^{\alpha}$ is indeed a martingale by means of It\^o's formula. To ease notations, we  drop   the superscript $\alpha$  from $X^{\alpha}$ and $Y^{\alpha}$. The process $V^\alpha$ is written as 
	\begin{align*}
	V^{\alpha}_t &= U_t^1 +  2 U_t^2 + \chi_{t}, \quad 0 \leq t \leq T, 
	\end{align*}
	where
	\begin{align*}
	U_t^1 & =  \langle  Y_t,  \boldsymbol{\Gamma_t}  Y_t  \rangle_\mu, \quad\quad U_t^2 \; = \;   \langle \Lambda_t,  Y_t \rangle_{\mu^\top}, 
	\end{align*}   
	and we recall that $Y$ is bounded in $(t,\theta)$ from \eqref{eq:estimateY4.2},  and has continuous sample path in $L^1(\mu)$ by Theorem \ref{prop:representation_of_X}.    
	From \eqref{eq:estimateY4.2}, and the admissibility condition on $\alpha$ $\in$ ${\cal A}$ combined with Cauchy-Schwarz inequality, it is clear that the drift 
	$b_t$ $:=$ $\tilde \beta(t) + \boldsymbol{B}Y_t + C\alpha_t$, and the 
	diffusion coefficient  $\sigma_t$ $:=$ $\tilde\gamma(t)+ \boldsymbol{D}Y_t + F\alpha_t$ of $Y$  take values in $L^\infty(\mu)$, and satisfy the integrability condition \eqref{integbsig}.  Moreover, from  \eqref{eq:estimateRiccati},  
	and since $t$ $\in$ $[0,T]$ $\mapsto$ $\Gamma_t$, $\Lambda_t$  are bounded (by continuity) in $L^1(\mu\otimes\mu)$ and   $L^1(\mu^\top)$,  
	we see that $t$ $\mapsto$ $R_t^1$ $:=$ ${\cal R}_1(\Gamma_t)$ is valued in   $L^\infty(\mu\otimes\mu)$, $t$ $\mapsto$ $R_t^2$ $:=$ ${\cal R}_2(t,\Gamma_t,\Lambda_t)$ is valued in $L^\infty(\mu^\top)$,  and satisfy  (see Remark \ref{R:Linfty}):  
	\begin{align*}
	\sup_{t\leq T} \big[ \|R^1_t\|_{L^\infty(\mu\otimes\mu)}  + \|R^2_t\|_{L^\infty(\mu^\top)} \big] & < \; \infty, 
	\end{align*}
	which clearly implies the integrability condition \eqref{integR}.  We can then apply  Lemma~\ref{L:ito} on $U^1$, $U^2$. Recalling that $\Gamma$ is symmetric, 
	this yields, after re-arranging the quadratic and linear terms in $Y$ and $\alpha$, using the equation for $\Gamma$  in \eqref{eq:Riccatis_monotone} and applying Fubini's theorem:
	\begin{align}
	dU^1_t 
	=&\int_{\R_+^2} Y_t(\theta)^\T\mu(d\theta)^\T \Big( S(\Gamma_t)(\theta) \hat{N}(\Gamma_t)^{-1} S(\Gamma_t)(\tau) - Q \Big) \mu(d\tau)Y_t(\tau) dt \\
	& \;  + \int_{\R_+}  2 Y_t(\theta)^\T \mu(d\theta)^\T  \left(\ints \Gamma_t(\theta,\tau')\mu(d\tau') \tilde{\beta}(t)+ D^\T \int_{\R_+^2} {\mu(d\theta')^\T} 
	\Gamma_t(\theta',\tau')\mu(d\tau')\tilde{\gamma}(t)  \right)  dt\\
	&\; +   \c_t^\T F^\T \int_{\R_+^2} \mu(d \theta)^\T \Gamma_t(\theta,\tau)   \mu(d\tau)  F \c_t  dt\\
	& \; + 2 \c_t^\T \left( \int_{\R_+} S(\Gamma_t)(\tau) \mu(d\tau)Y_t(\tau) + F^\T \int_{\R_+^2} \mu(d\tau)^\T  \Gamma_t(\theta,\tau)  \mu(d\tau)\tilde{\gamma}(t)  \right) dt \\
	& \; + \bigg( { \tilde{\gamma}(t)^\T} \int_{\R_+^2} \mu(d\theta')^\T \Gamma_t(\theta',\tau') \mu(d\tau'){ \tilde{\gamma}(t)} \bigg) dt    \;  + \;  H^1_t dW_t,
	\end{align}
	with 
	\bes{
		H^1_t = 2 \sigma(t, X_t, \c_t)^\T \int_{\R_+^2}  \mu(d\theta)^\T \Gamma_t(\theta,\tau)\mu(d\tau) Y_t(\tau).
	}
	Similarly, using  the equation for $\Lambda$  in \eqref{eq:Riccatis_monotone} we get
	\bes{
		dU^2_ t =&\int_{\R_+}Y_t(\theta)^\T \mu(d\theta)^\T \Big( S(\Gamma_t)(\theta)^\T \hat{N}(\Gamma_t)^{-1}  h(t,\Gamma_t, \Lambda_t) - L - {Q g_0(t)} \\
		&  {-  \ints \Gamma_t(\theta,\tau')\mu(d\tau') \tilde{\beta}(t)- D^\T \int_{\R_+^2} {\mu(d\theta')^\T} 
			\Gamma_t(\theta',\tau')\mu(d\tau')\tilde{\gamma}(t)}  \Big )dt \\
		& \; +   \c_t^\T \bigg( C^\T  \ints \mu(d\theta')^\T \Lambda_t(\theta') \bigg) dt + \tilde{\beta}(t)^\T \ints \mu(d\theta')^\T \Lambda_t(\theta') dt \; +  \;  H^2_t dW_t,
	}
	where 
	\bes{
		H^2_t &=  {\sigma}(t, X_t, \c_t)^\T \int_{\R_+} \mu(d\theta)^\T \Lambda_t(\theta).
	}
	Now we write that 
	\begin{align*}
	dM^{\alpha}_t &= \left(X_t^{\top} Q X_t  + \alpha_t^\top N \alpha_t - (\alpha_t-{\mathcal T_t(\alpha)})^\top \hat N(\Gamma_t) (\alpha_t-{\mathcal T_t(\alpha)}) 
	+ {2} L^\top X_t + \dot \chi_t \right) dt 
	+ dU_t^1 +  2 dU^2_t. 
	\end{align*}
	{After a completion of the squares for the terms in  $\alpha$, we observe that} 
	\begin{align*}
	X_t^\top Q X_t &= \int_{\R_+^2} Y_t(\theta)^\top \mu(d\theta)^\top Q \mu(d\tau) Y_t(\tau) + 2 g_0(t)^\top \int_{\R_+}{Q} \mu(d\theta)Y_t(\theta) + g_0(t)^\top Q g_0(t), \\
	L^\top X_t &= L^\top   \int_{\R_+} \mu(d\theta)Y_t(\theta) + L^\top g_0(t),
	\end{align*} 
	using the equation  for $\chi$  in \eqref{eq:Riccatis_monotone}, and 
	adding all the above makes the drift in $M^{\alpha}$ vanish so that  
	$$  dM^{\alpha}_t = \left(H^1_t +  2 H^2_t\right) dW_t.$$
	This shows that $M^{\alpha}$ is  a local martingale. To argue true martingality, successive applications of Jensen and Cauchy--Schwarz inequalities combined with the bound \eqref{eq:estimateRiccati} yield, for a constant $c$,
	\begin{align*}
	\int_0^T \E \left[ |H^1_s|^2 \right] ds &\leq cM^2  \int_0^T \E \left[ \left(1 + |X_s|^2 + |\alpha_s|^2 \right) \|Y_s\|^2_{L^1(\mu)} \right] ds \\
	&\leq cM^2\int_0^T \E \left[ \left(1 + |X_s|^4 + |\alpha_s|^4\right) \right]^{1/2} \E\left[ \|Y_s\|^4_{L^1(\mu)} \right]^{1/2} ds 
	\end{align*}
	which is finite due to \eqref{eq:estimate moment X}, \eqref{eq:estimateY4} and the admissibility of $\alpha$.   Since $\Lambda \in C([0,T],L^1(\mu^\top))$, 
	we  get  similarly  that
	$$ \int_0^T \E \left[ |H^2_s|^2 \right] ds < \infty. $$
	Whence, by the Burkholder-Davis-Gundy inequality, $M^{\alpha}$ is a true martingale, and  the proof is complete.
\end{proof}

\begin{remark}
	Theorem~\ref{thm:verification} is still valid if one adds a linear quadratic terminal cost to the cost functional    \eqref{eq:original_problem}, that is 
	\begin{align}
	J(\alpha)  &= \E\left[\int_0^T  f(X^{\alpha}_s,\alpha_s) d s + (X^{\alpha}_T)^\top P X^{\alpha}_T + 2 U^\top X^{\alpha}_T  \right],
	\end{align}
	provided the terminal conditions of the system of Riccati equations \eqref{eq:Riccatis_monotone} are updated to 
	$$ \Gamma_{T}(\theta,\tau)= P , \quad \Lambda_{T} (\theta)=U + P g_0(T) , \quad \chi_{T}=g_0(T)^\T P g_0(T) + 2 U^\T g_0(T). $$
	The main technical difficulty resides in  Assumption \ref{verifi}. If $K$ has no singularities at $0$,  then one can still construct continuous solutions  to   \eqref{eq:Riccatis_monotone}. However, in the presence of a singularity,  the solution $t\to\Gamma_{t}$ inherits the singularity of the kernel and is no longer continuous but only lies in 
	$L^1([0,T],L^1(\mu\otimes \mu))$.
	\ep
\end{remark}

\section{Proof of solvability result}\label{S:proof}

\begin{proof}[Proof of Theorem \ref{T:Riccatiexistence}] 
	
	First,	the existence  and uniqueness of a solution   $\Gamma \in C([0,T], L^1(\mu \otimes \mu))$ to the Riccati equation \eqref{eq:Riccati_monotone_kernel_mild} satisfying the estimate 
	\eqref{eq:estimateRiccati} and such that $\Gamma_t \in \S^d_+(\mu\otimes \mu)$, for all $t\leq T$, follow from  \cite[Theorem 2.3]{abietal19b}. 
	
	Second, we note that equation \eqref{eq:BSDE_monotone_kernel_mild} for $\Lambda$ is a Lyapunov equation of the form 
	\begin{align}
	\Psi_t(\theta, \tau) &=  \int_t^T e^{-(\theta + \tau)(s-t)} F(s,\Psi_s)(\theta,\tau)ds, \quad { t\leq T, \quad \mu\otimes\mu-a.e.}	\label{def:infinite_dim_lyapunov1} \\
	F(s,\Psi)(\theta, \tau)&= \tilde Q_s(\theta, \tau) + \tilde D^1_s(\theta)^\T \int_{\R_+^2} { \mu_1(d\theta')^\T \Psi(\theta',\tau ') \mu_2(d\tau ')} \tilde D^2_s(\tau) \\
	&\quad  + \tilde B^1_s(\theta)^\T \int_{\R_+} \mu_1(d\theta')^\T \Psi(\theta',\tau)  + \int_{\R_+} \Psi(\theta,\tau ') \mu_2(d\tau ') \tilde B^2_s(\tau), 	\label{def:infinite_dim_lyapunov2}
	\end{align}
	where  $\mu_i$, $i$ $=$ $1,2$,  $d_{i1} \times d_{i2}$-matrix valued measures on $\R_+$, with
	$$ d_{11}=d, \quad d_{12}=d', \quad d_{21}=d_{22}=1, \quad \mu_1=\mu, \quad \mu_2=0, $$
	and  coefficients 
	\begin{align*}
	\tilde Q_t(\theta,\tau) &=L+Qg_0(t) +  \int_{\R_+} \Gamma_{t}(\theta,\tau')\mu(d\tau')\widetilde \beta_t \\
	&\quad -S(\Gamma_{t})(\theta)^\top \hat N(\Gamma_{t})^{-1}F^\top \int_{\R_+^2} \mu(d\theta')^\top \Gamma_{t}(\theta',\tau')\mu(d\tau') \widetilde \gamma_t,\\
	\tilde B^1_t(\theta)  &= B - C  \hat N(\Gamma_{t})^{-1} S(\Gamma_{t})(\theta) , \quad   \quad  \tilde B^2_t(\theta)= \tilde D^1_t(\theta)=\tilde D^2_t(\theta)=0.
	\end{align*}
	From \cite[Theorem 3.1]{abietal19b}, we then get the existence and uniqueness of a solution  $\Lambda \in C([0,T],L^1(\mu^\top))$ to the equation \eqref{eq:BSDE_monotone_kernel_mild}.
	Finally, we notice that the right hand side of  equation  \eqref{eq:ODE_monotone_kernel_mild}  for  $\chi$, does not involve $\chi$. Therefore, the existence and the continuity of $\chi$ follow upon simple integration, which is justified by the boundedness of $g_0,\beta,\gamma$, that  of $\Lambda$  
	in $L^1(\mu^\top)$ and that of $\Gamma$ in $L^1(\mu \otimes \mu)$ together with the bound \eqref{eq:estimateRiccati}.
\end{proof}

\vspace{1mm}

\begin{proof}[Proof of Theorem~\ref{T:mainoptimalsolution}]
	
	The result is a direct consequence of Theorem~\ref{thm:verification} once we prove that conditions \ref{verifi}-\ref{verifiii} are satisfied. Condition \ref{verifi}  follows from 
	Theorem~\ref{T:Riccatiexistence}, {and $\hat N(\Gamma)$ is $\S^m_+$-valued since  $N\in \S^m_+$ and $\Gamma$ is $\S^d_+(\mu \otimes \mu)$ valued}.  It remains to prove  condition \ref{verifiii}, i.~e. that there exists a progressively measurable  process $\alpha^*$ $\in$ $\Acal$ associated to 
	a controlled SDE $Y^{\alpha^*}$ $\in$ $L^1(\mu)$  such that \eqref{eq:optimalcontrol_monotone} holds, and $\sup_{t\leq T}\E\left[ |\alpha^*_t|^4\right]<\infty$.  
	To this end, we consider  the  coefficients $\delta,\Sigma$ as in Example~\eqref{ex:coefficientssde} with 
	\begin{align*}
	b_0(t,\theta)&=  \widetilde \beta(t)  -C  \hat{N}(\Gamma_{t})^{-1} h(t,\Gamma_{t}, \Lambda_{t})  \quad  \\
	\gamma_0(t,\theta)&= \widetilde \gamma(t)  -F \hat{N}(\Gamma_{t})^{-1} h(t,\Gamma_{t}, \Lambda_{t})\\
	B_0(t,\theta,\theta') &=	B   - C \hat{N}(\Gamma_{t})^{-1}   S(\Gamma_{t})(\theta')    \\
	D_0(t,\theta,\theta')&=	D  - F \hat{N}(\Gamma_{t})^{-1}  S(\Gamma_{t})(\theta')    \\
	C_0(t,\theta)& = F_0(t,\theta)=0.
	\end{align*}
	By the boundedness of $\beta,\gamma,g_0,$ on $[0,T]$, the continuity hence the boundedness on $[0,T]$ of  $\Gamma$ in  $L^1(\mu \otimes \mu)$ and $\Lambda$ in 
	$L^1(\mu^\top)$ together with  the bound in Theorem  \ref{T:Riccatiexistence}, the previous coefficients satisfy \eqref{eq:examplecoeff1}-\eqref{eq:examplecoeff2}.
	Whence, for any $p\geq 2$, Example~\ref{ex:coefficientssde}, combined with Theorem~\ref{T:existenceY}, yields the existence of a process $Y^*$ with initial condition $Y_0^*\equiv 0$,   for the coefficients $\delta,\Sigma$ as defined above  and such that \eqref{eq:estimateYLmu} holds.   One can therefore define a process $\alpha^*$ by 
	\begin{align}
	\c^*_t &= {-\hat{N}(\Gamma_{t})^{-1} \Bigg( h(t,\Gamma_{t}, \Lambda_{t}) + \int_{\R_+} S(\Gamma_{t})(\theta) \mu(d\theta) Y_t^*(\theta)  \Bigg)},
	\end{align}
	and see, again from the boundedness on $[0,T]$ of  $(\Gamma,\Lambda)$ in  $L^1(\mu \otimes \mu)\times L^1(\mu^\top)$ together with  the bound \eqref{eq:estimateRiccati}, that 
	\begin{align*}
	\E\big[|\c^*_t|^p\big] &\leq c(1+M^4) \sup_{0\leq t\leq T} \E\big[ \|Y_t^*\|^p_{L^1(\mu)} \big], \quad 0 \leq t \leq T,
	\end{align*}	
	which is finite due to \eqref{eq:estimateYLmu}. In particular, for $p=4$, we get that $\alpha^* $ lies in $\Acal$. \\
	Finally,  by construction, the coefficients of $Y^*$ can be re-written in terms of $\alpha^*$ as 
	\begin{align*}
	\delta(t,\omega,Y_t^*)&= \widetilde \beta(t) +  B\int_{\R_+} \mu(d\theta)  Y_t^*(\theta) + C \alpha^*_t, \\
	\Sigma(t,\omega,Y_t^*)&= \widetilde \gamma(t)  +  D\int_{\R_+} \mu(d\theta)  Y_t^*(\theta) + F \alpha^*_t,
	\end{align*}
	which means that $Y^*$ $=$ $Y^{\alpha^*}$, and ends the proof. 
\end{proof}

\section{Stability and approximation} \label{secapprox}

In this section,  we prove Theorem~\ref{T:mainstability} starting with {\it a priori}  $L^2$--estimates for the controlled stochastic Volterra equation before approximating  the value function. 

\subsection{A-priori $L^2$--estimates for stochastic Volterra equations}
Let $X^{\alpha}$ be the solution produced by Theorem~\ref{P:strong}.  
We  provide an explicit bound for $\E \left[\|X^{\alpha}\|^2_{L^2(0,T)}\right] $, which is finite due to  \eqref{eq:estimate moment X}. For this, let us introduce 
the resolvent of the second kind $R$ of a scalar kernel $k$, defined as  the unique $L^1([0,T],\R)$ solution to the linear convolution equation
\begin{align*}
R(t) = k(t) +    \int_0^t k(t-s)R(s)ds = k(t) +    \int_0^t R(t-s) k(s)ds, \quad t \leq T.
\end{align*}
Recall that the resolvent $R$ exists, for any kernel $k \in L^1([0,T],\R)$, see \cite[Theorems 2.3.1  and 2.3.5]{GLS:90}. 

\begin{lemma}
	\label{L:aprioriL2} 
	Fix $K\in L^2([0,T],\R^{d\times d'})$, $\alpha \in \Acal$,    $g_0 \in L^2([0,T],\R^d)$ 	and $\beta, \gamma \in L^2([0,T],\R^{d'})$. If  $X^{\alpha}$ is a  progressively measurable  process satisfying   \eqref{eq:sve} with
	\begin{align}\label{eq:moment L2}
	\E\left[\| X^{\alpha}\|^2_{L^2(0,T)}\right] < \infty,
	\end{align}
	then, it holds that
	\begin{align}
	\E\left[ \| X^{\alpha}\|^2_{L^2(0,T)} \right] &\leq   c m_T(g_0,K,\alpha) \left( 1+  \|R\|_{L^1(0,T)}\right) , \label{eq:apriori1}
	\end{align}
	where $c$ is a constant only depending on $(T,B,C,D,F)$,
	\begin{align*}
	m_T(g_0,K,\alpha)&=    \|g_0\|_{L^2(0,T)}^2 +  \|K\|_{L^2(0,T)}^2 \left( \|\beta\|^2_{L^2(0,T)} + \|\gamma\|^2_{L^2(0,T)}+ \E\big[\| \alpha\|^2_{L^2(0,T)}\big] \right)
	\end{align*}
	and $R$ is the resolvent of $c |K|^2$.
\end{lemma}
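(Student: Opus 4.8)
The plan is to reduce the estimate to a scalar linear convolution (generalized Grönwall) inequality for the deterministic function $u(t) := \E[|X_t^\alpha|^2]$, which is finite and belongs to $L^1(0,T)$ by \eqref{eq:moment L2}, and then to compare $u$ with the resolvent $R$ of $c|K|^2$. First I would insert the affine expressions for $b$ and $\sigma$ and split the right-hand side of \eqref{eq:sve} into the three contributions $g_0(t)$, the drift convolution, and the stochastic convolution, bounding $|X_t^\alpha|^2 \le 3(\cdots)$ by convexity.

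The key step is to arrange \emph{both} the drift and the diffusion into a convolution against $|K|^2$. For the diffusion term the Itô isometry gives directly $\E[|\int_0^t K(t-s)\sigma_s\,dW_s|^2] = \int_0^t \E[|K(t-s)\sigma_s|^2]\,ds \le \int_0^t |K(t-s)|^2\,\E[|\sigma_s|^2]\,ds$, where $\sigma_s=\sigma(s,X_s^\alpha,\alpha_s)$. For the drift term I would apply Cauchy--Schwarz against the constant weight $1$ on $[0,t]$, namely $|\int_0^t K(t-s)b_s\,ds|^2 \le T\int_0^t |K(t-s)|^2 |b_s|^2\,ds$, which produces the same $|K|^2$ kernel; this is the point where one should resist the naive bound $\|K\|_{L^2(0,T)}^2\int_0^t |b_s|^2\,ds$, since that term would not be a genuine convolution. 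Using the linear growth $|b_s|^2 + |\sigma_s|^2 \le c(|\beta(s)|^2 + |\gamma(s)|^2 + |X_s^\alpha|^2 + |\alpha_s|^2)$ and taking expectations yields
\begin{align*}
u(t) \le f(t) + c\int_0^t |K(t-s)|^2\, u(s)\,ds, \qquad t \le T,
\end{align*}
with $f(t) = 3|g_0(t)|^2 + c\int_0^t |K(t-s)|^2\big(|\beta(s)|^2 + |\gamma(s)|^2 + \E[|\alpha_s|^2]\big)\,ds \ge 0$.

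I would then invoke the generalized Grönwall lemma for resolvents of the second kind, see \cite{GLS:90}: since $k := c|K|^2 \ge 0$ lies in $L^1(0,T)$, its resolvent $R$ exists and is nonnegative, because its Neumann series $R=\sum_{n\ge 1} k^{*n}$ has nonnegative terms, and the inequality $u \le f + k*u$ then upgrades to $u \le f + R*f$ for a.e.\ $t\le T$. Integrating over $[0,T]$ and applying Young's convolution inequality gives $\int_0^T u(t)\,dt \le \|f\|_{L^1(0,T)}\big(1 + \|R\|_{L^1(0,T)}\big)$. A direct Fubini computation, with $\psi(s):=|\beta(s)|^2 + |\gamma(s)|^2 + \E[|\alpha_s|^2]$,
\begin{align*}
\int_0^T\!\!\int_0^t |K(t-s)|^2\psi(s)\,ds\,dt = \int_0^T \psi(s)\!\int_s^T |K(t-s)|^2\,dt\,ds \le \|K\|_{L^2(0,T)}^2\,\|\psi\|_{L^1(0,T)},
\end{align*}
then yields $\|f\|_{L^1(0,T)} \le c\,m_T(g_0,K,\alpha)$, which delivers \eqref{eq:apriori1}.

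The main obstacle is the correct placement of the $|K|^2$ kernel in the drift contribution and the verification that the resolvent comparison is legitimate: one must ensure $u\in L^1(0,T)$, which is exactly guaranteed by \eqref{eq:moment L2} and also makes the Itô isometry applicable, and one must use the nonnegativity of $c|K|^2$ to conclude $R\ge 0$ and hence that the comparison runs in the desired direction. The remaining manipulations — convexity, Cauchy--Schwarz, linear growth, Fubini and Young — are routine and only affect the value of the constant $c$.
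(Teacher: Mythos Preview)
Your proof is correct. Both your argument and the paper's reach a generalized Grönwall inequality of convolution type and invoke \cite[Theorem 9.8.2]{GLS:90}, but the unknown function is different: you work pointwise with $u(t)=\E[|X_t^\alpha|^2]$, derive $u\le f + c|K|^2*u$ directly from It\^o's isometry and Cauchy--Schwarz, apply the resolvent comparison, and only at the very end integrate over $[0,T]$ and use Young's inequality. The paper instead works with the cumulative quantity $\phi(T)=\E[\|X^\alpha\|^2_{L^2(0,T)}]$; it first bounds $\|X^\alpha\|^2_{L^2(0,T)}$ by five pieces, then uses Tonelli and a chain of substitutions to rewrite $\int_0^T\int_0^t |K(t-s)|^2|X_s^\alpha|^2\,ds\,dt$ as $\int_0^T |K(T-s)|^2\|X^\alpha\|^2_{L^2(0,s)}\,ds$, producing a convolution inequality for $\phi$ as a function of the upper limit. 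Your route is shorter and avoids the change-of-variables bookkeeping; the paper's route has the modest advantage that every quantity is an integrated one, so no a.e.-in-$t$ finiteness justification is needed before applying the resolvent comparison, whereas in your approach one should note that $|K|^2\in L^1$ and $u\in L^1$ guarantee $|K|^2*u$ is finite a.e., which suffices for the Grönwall step.
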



\begin{proof}
	Throughout the proof, we make use of the notations $(f*g)(t)=\int_0^t f(t-s)g(s)ds$ and $(f*dZ)_t=\int_0^t f(t-s)dZ_s$, and $c$ will denote a constant depending exclusively on $(T,B,C,D,F)$ that may vary from line to line.   We first observe that by Jensen's inequality
	\begin{align*}
	\|X^{\alpha}_t\|^2_{L^2(0,T)} &\leq  5\| g_0\|^2_{L^2(0,T)} + 5 \| K*\left(\beta + C\alpha \right) \|^2_{L^2(0,T)}    +   5\|K* BX^{\alpha}\|^2_{L^2(0,T)}\\
	&\quad + 5\| K* DX^{\alpha} dW\|^2_{L^2(0,T)} +    5 \|K*\left(\gamma  + F\alpha \right)dW\|^2_{L^2(0,T)}\\
	&=  5(\textbf{I} + \textbf{II} + \textbf{III}+\textbf{IV}+\textbf{V}).
	\end{align*}
	An application of Young and Cauchy--Schwarz inequalities yields
	\begin{align*}
	\textbf{II} 
	&\leq c \|K\|_{L^2(0,T)}^2 \left( \|\beta\|^2_{L^2(0,T)} + \| \alpha\|^2_{L^2(0,T)} \right).
	\end{align*}
	Successive applications of Cauchy--Schwarz, Tonelli's theorem and  changes of variables lead to 
	\begin{align*}
	\textbf{III}  & \leq c \int_0^T  \int_0^t |K(t-s)|^2 |X^{\alpha}_s|^2 ds dt \\
	& = c \int_0^T  \int_0^t |K(s)|^2 |X^{\alpha}_{t-s}|^2 ds dt \\
	& = c \int_0^T |K(s)|^2  \int_s^T  |X^{\alpha}_{t-s}|^2 dt ds\\
	& = c \int_0^T |K(T-s)|^2  \int_0^s  |X^{\alpha}_{u}|^2 du ds\\
	& = c \int_0^T |K(T-s)|^2   \|X^{\alpha}\|^2_{L^2(0,s)}  ds.
	\end{align*}
	Taking the expectation, we get
	\begin{align*}
	\E[\textbf{II}+\textbf{III}]
	&\leq c \|K\|_{L^2(0,T)}^2 \left( \|\beta\|^2_{L^2(0,T)} + \E\left[\| \alpha\|^2_{L^2(0,T)}\right] \right) \\
	&\quad +  c \int_0^T |K(T-s)|^2   \E\left[\|X^{\alpha}\|^2_{L^2(0,s)}\right]  ds.
	\end{align*}
	Similarly,  It\^o's isometry combined with Tonelli's theorem and multiple changes of variables give
	\begin{align*}
	\E[\textbf{IV}]&=|D|^2 \E\left[ \int_0^T  \int_0^t |K(t-s)|^2 |X^{\alpha}_s|^2 ds  dt\right] \\
	& = |D|^2  \int_0^T |K(T-s)|^2 \E\left[\|X^{\alpha}\|^2_{L^2(0,s)}\right] ds. 
	\end{align*}
	Another application of It\^o's isometry and Young's inequality  shows that 
	\begin{align*}
	\E[\textbf{V}]&  \leq c \| |K|^2*\left(|\gamma|^2  + |F|^2|\alpha|^2 \right)\|^2_{L^1(0,T)}   \\
	& \leq c \| K\|_{L^2(0,T)}^2 \left( \| \gamma\|_{L^2(0,T)}^2 +\E\left[ \| \alpha\|_{L^2(0,T)}^2\right]  \right).
	\end{align*}
	Combining the above yields
	\begin{align*}
	\E\left[\|X^{\alpha}\|^2_{L^2(0,T)}\right] 
	&\leq  c \|g_0\|_{L^2(0,T)}^2 + c \|K\|_{L^2(0,T)}^2 \left( \|\beta\|^2_{L^2(0,T)} + \|\gamma\|^2_{L^2(0,T)}+ \E\left[\| \alpha\|^2_{L^2(0,T)}\right] \right) \\
	&\quad +  c \int_0^T |K(T-s)|^2   \E\left[\|X^{\alpha}\|^2_{L^2(0,s)}\right]  ds,\\
	&\leq c m_T(g_0,K,\alpha)\left( 1+  \|R\|_{L^1(0,T)}\right)
	\end{align*}
	where the last line follows from  the generalized Gronwall inequality for convolution equations with $R$  the resolvent of $c |K|^2$, see \cite[Theorem 9.8.2]{GLS:90}. 
\end{proof}

\begin{lemma}
	\label{lemma:XXn_approximation}
	Fix $n \in \N$. Let $K,K^n\in L^2([0,T],\R^{d\times d'})$, $\alpha \in \Acal$,   $g_0,g_0^n \in L^2([0,T],\R^d)$ 	and $\beta, \gamma \in L^2([0,T],\R^{d'})$. 
	Assume that there exist  two progressively 
	measurable processes  $X$ and $X^n$ satis\-fying \eqref{eq:sve}  and \eqref{eq:moment L2} for the respective inputs  $(g_0,K,\alpha)$ and $(g_0^n,K^n, \alpha)$. Then, 
	\begin{align}
	\label{eq:estimatediffL2}
	\E\left[  \|X^n -X\|^2_{L^2(0,T)}\right] \leq  cm_n \left(1 + \|R^n\|_{L^1(0,T)}\right),
	\end{align}
	where 
	\begin{align}
	m_n &=  \|g_0^n- g_0\|^2_{L^2(0,T)} +  \|K^n-K\|^2_{L^2(0,T)}\left(  \E\left[\|X\|^2_{L^2(0,T)}\right] + \E\left[\|\alpha\|^2_{L^2(0,T)}\right] \right)  
	\label{eq:estimatediffmnL2}
	\end{align}
	and $R^n$ is the resolvent of $c |K^n|^2$. If in addition 
	\begin{align}
	\label{eq:estimatediffconvL2} \|g_0^n-g_0 \|_{L^2(0,T)}\to 0,   \quad \|K^n-K\|_{L^2(0,T)}\to 0,
	\end{align}
	as $n\to \infty$, then, 
	\begin{align}\label{eq:convXnXL2}
	\E\left[ \|X^n-X \|_{L^2(0,T)}^2\right] \to 0,  \quad \mbox{as $n\to \infty$}.
	\end{align}
\end{lemma}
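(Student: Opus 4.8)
The plan is to mimic the a priori estimate of Lemma~\ref{L:aprioriL2}, applied now to the difference process $\Delta X := X^n - X$. First I would subtract the two copies of \eqref{eq:sve} and exploit the affine structure of $b,\sigma$ to separate the terms depending on $\Delta X$ from the genuine source terms. Writing $\Delta g_0 := g_0^n - g_0$ and $\Delta K := K^n - K$, and using $b(s,X^n_s,\alpha_s) - b(s,X_s,\alpha_s) = B\Delta X_s$ together with $\sigma(s,X^n_s,\alpha_s) - \sigma(s,X_s,\alpha_s) = D\Delta X_s$, one finds that $\Delta X$ solves a Volterra equation of the same type as \eqref{eq:sve}, with kernel $K^n$, driven by $\Delta X$ itself through $B,D$, and forced by $\Delta g_0$ together with the two convolution source terms $\Delta K * b(\cdot, X, \alpha)$ and $\big(\Delta K * \sigma(\cdot,X,\alpha)\big)\,dW$, where $X$ is the solution associated with the limiting data.

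Then I would estimate $\E[\|\Delta X\|^2_{L^2(0,t)}]$ exactly as in the proof of Lemma~\ref{L:aprioriL2}: a single application of Jensen's inequality splits the square into the $\Delta g_0$ contribution, the two $\Delta K$-source contributions, and the two contributions carrying $\Delta X$ with kernel $K^n$. For the source terms, Young's and Cauchy--Schwarz inequalities (drift part) and It\^o's isometry together with Young's inequality (stochastic part) bound them by $c\|\Delta K\|^2_{L^2(0,T)}\big(\|\beta\|^2_{L^2(0,T)}+\|\gamma\|^2_{L^2(0,T)}+\E[\|X\|^2_{L^2(0,T)}] + \E[\|\alpha\|^2_{L^2(0,T)}]\big)$, while the $\Delta g_0$ term contributes $\|\Delta g_0\|^2_{L^2(0,T)}$; since $\|\beta\|_{L^2(0,T)},\|\gamma\|_{L^2(0,T)}$ are fixed and finite, these pieces only inflate the constant and the bound is of the form $c\,m_n$ with $m_n$ as in \eqref{eq:estimatediffmnL2} (note $\E[\|X\|^2_{L^2(0,T)}]<\infty$ by \eqref{eq:moment L2} or Lemma~\ref{L:aprioriL2}). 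For the two terms carrying $\Delta X$, Tonelli's theorem and the same changes of variables as in Lemma~\ref{L:aprioriL2} produce the convolution remainder $c\int_0^t |K^n(t-s)|^2\,\E[\|\Delta X\|^2_{L^2(0,s)}]\,ds$. The generalized Gronwall inequality for convolution equations \cite[Theorem 9.8.2]{GLS:90}, applied with the resolvent $R^n$ of the nonnegative kernel $c|K^n|^2$, then yields \eqref{eq:estimatediffL2}.

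For the convergence statement I would let $n\to\infty$ in \eqref{eq:estimatediffL2}. Under \eqref{eq:estimatediffconvL2} we have $m_n\to 0$, since $\|\Delta g_0\|^2_{L^2(0,T)}\to 0$, $\|\Delta K\|^2_{L^2(0,T)}\to 0$, while the factors $\E[\|X\|^2_{L^2(0,T)}]$ and $\E[\|\alpha\|^2_{L^2(0,T)}]$ are finite and independent of $n$. The main obstacle is to control the resolvent factor $1+\|R^n\|_{L^1(0,T)}$ uniformly in $n$, so that the product in \eqref{eq:estimatediffL2} still vanishes. I would first observe that $\|K^n-K\|_{L^2(0,T)}\to 0$ forces $|K^n|^2\to|K|^2$ in $L^1(0,T)$: by the pointwise reverse triangle inequality and Cauchy--Schwarz, $\big\||K^n|^2-|K|^2\big\|_{L^1(0,T)}\leq\|K^n-K\|_{L^2(0,T)}\big(\|K^n\|_{L^2(0,T)}+\|K\|_{L^2(0,T)}\big)\to 0$, so $k_n:=c|K^n|^2\to k:=c|K|^2$ in $L^1(0,T)$. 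To bound $\|R^n\|_{L^1(0,T)}$ uniformly I would test the resolvent identity $R^n = k_n + k_n * R^n$ against the exponentially weighted norm $\|f\|_\lambda:=\int_0^T e^{-\lambda t}|f(t)|\,dt$, which satisfies $\|f*g\|_\lambda\leq\|f\|_\lambda\|g\|_\lambda$ and $\|k_n\|_\lambda\to\|k\|_\lambda$ for each fixed $\lambda$. Choosing $\lambda$ large enough that $\|k\|_\lambda\leq 1/4$, one gets $\|k_n\|_\lambda\leq 1/2$ for all large $n$, whence $\|R^n\|_\lambda\leq\|k_n\|_\lambda/(1-\|k_n\|_\lambda)\leq 1$ and therefore $\|R^n\|_{L^1(0,T)}\leq e^{\lambda T}$; since each $\|R^n\|_{L^1(0,T)}$ is finite by \cite[Theorem 2.3.1]{GLS:90}, this gives $\sup_n\|R^n\|_{L^1(0,T)}<\infty$. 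Combined with $m_n\to 0$, the estimate \eqref{eq:estimatediffL2} then yields \eqref{eq:convXnXL2}.
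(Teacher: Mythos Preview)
Your argument is correct and follows the same route as the paper: subtract the two equations, isolate the $\Delta X$ terms with kernel $K^n$, bound the source terms via Young/Cauchy--Schwarz/It\^o isometry, and close with the convolution Gronwall inequality \cite[Theorem 9.8.2]{GLS:90}.

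The one place where you diverge is the uniform control of $\|R^n\|_{L^1(0,T)}$. The paper simply invokes the continuous dependence of the resolvent on the kernel \cite[Lemma 9.3.11]{GLS:90} to get $R^n\to R$ in $L^1$, hence boundedness. Your direct argument via the exponentially weighted norm $\|\cdot\|_\lambda$ is a self-contained and elementary substitute: it avoids the external citation and makes explicit why $L^1$-convergence of $c|K^n|^2$ suffices. Both approaches work; yours is slightly more informative, the paper's is shorter.
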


\begin{proof}
	Fix $t \leq T$. We start by writing 
	\begin{align*}
	X_t- X^n_t &=\left(g_0(t)-g_0^n(t)\right)+ \int_0^t \left(K(t-s)-K^n(t-s)\right)\left( BX_s + C\alpha_s\right) ds \\
	&\quad - \int_0^tK^n(t-s) B (X^n_s-X_s)   ds \\
	&\quad + \int_0^t \left(K(t-s)-K^n(t-s)\right)\left( DX_s + F\alpha_s\right) dW_s \\
	&\quad - \int_0^tK^n(t-s) D (X^n_s-X_s)  dW_s \\
	&= \textbf{I}_t + \textbf{II}_t + \textbf{III}_t + \textbf{IV}_t + \textbf{V}_t.
	\end{align*}
	In the sequel, $c$ denotes a constant independent of $n$ that may vary from line to line.  Repeating the same argument as in the proof of Lemma \ref{L:aprioriL2}, we get
	\begin{align*}
	\E\left[ \|\textbf{II}\|^2_{L^2(0,T)} + \|\textbf{IV}\|^2_{L^2(0,T)}\right]\leq c\|K^n-K\|_{L^2(0,T)}^2 \left(  \E\left[\|X\|^2_{L^2(0,T)}\right] + \E\left[\|\alpha\|^2_{L^2(0,T)}\right] \right),
	\end{align*}
	which is finite due to Lemma~\ref{L:aprioriL2}.
	Similarly,
	\begin{align*}
	\E \left[\|\textbf{III}\|^2_{L^2(0,T)}+\|\textbf{V}\|^2_{L^2(0,T)}\right]&\leq	\; c \int_0^T |K^n(T-s)|^2 \E\left[\|X^n -X\|^2_{L^2(0,s)}\right]ds.
	\end{align*}
	Combining the above and invoking \cite[Theorem 9.8.2]{GLS:90} for the gene\-ralized Gronwall inequality for convolution equations yields the estimate \eqref{eq:estimatediffL2}.  We now prove that its right hand side goes to $0$, as $n$ goes to infinity. We first note that $R^n \to R$ in $L^1$, by virtue of the continuous dependence of the resolvent on the kernel combined with the $L^2$--convergence of $(K^n)_{n \geq 1}$ in \eqref{eq:estimatediffconvL2},  see \cite[Lemma 9.3.11]{GLS:90}. Consequently, the sequences $(\|R^n\|_{L^1(0,T)})_{n \geq 1}$ and  $(\|K^n\|_{L^2(0,T)})_{n \geq 1}$ are uniformly bounded in $n$, that is 
	\begin{align}\label{eq:supRn}
	\sup_{n \geq 1}  \|R^n\|_{L^1(0,T)} +  \sup_{n \geq 1}  \|K^n\|_{L^2(0,T)} < \infty.
	\end{align}
	Thus,  it follows from \eqref{eq:estimatediffL2} that it is enough to prove that $m_n \to 0$ to get the claimed convergence \eqref{eq:convXnXL2}. This is straightforward  from \eqref{eq:estimatediffconvL2} and the proof is complete.
\end{proof}

\subsection{Approximation of the value function}\label{S:approximationproof}
The proof of Theorem~\ref{T:mainstability} now follows from the two following lemmas.  In the sequel, we work under the assumptions of   Theorem~\ref{T:mainstability} and we recall the expressions of  $J^n,X^{n,\alpha}$ in \eqref{eq:approxprob}. To ease notations, we drop the $\alpha$ superscripts.

\begin{lemma}
	\label{L:diffJ}
	Let $\c \in \Acal$. Under \eqref{assumption:approximation} we have 
	\bes{
		|J(\c) - J^n(\c)|^2 \leq c \left(2+ \left(\E\left[\| \alpha\|^2_{L^2(0,T)}\right] \right)^2 \right)  \left( \|g_0^n- g_0\|^2_{L^2(0,T)}  + \|K^n-K\|^2_{L^2(0,T)}   \right),
	}
	where $c$ is a constant independent of $n$.
\end{lemma}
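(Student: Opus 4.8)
The plan is to start from the definitions of the two cost functionals and exploit the cancellation of the control-penalty term. Writing $X = X^\alpha$ and $X^n = X^{n,\alpha}$ for the two Volterra solutions driven by the \emph{same} control $\alpha$, the quadratic running cost \eqref{fquadra} gives
\[
J(\alpha) - J^n(\alpha) = \E\left[\int_0^T \Big( X_s^\top Q X_s - (X^n_s)^\top Q X^n_s + 2(X_s - X^n_s)^\top L\Big)ds\right],
\]
since the term $\alpha_s^\top N\alpha_s$ is identical in both problems and cancels. Using the symmetry of $Q$, I would rewrite the quadratic difference as $(X_s - X^n_s)^\top Q(X_s + X^n_s)$, so that the whole integrand becomes linear in $X_s - X^n_s$.

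Next I would bound each piece by Cauchy--Schwarz, first in the time variable on $[0,T]$ and then in $\Omega$, which yields
\[
|J(\alpha) - J^n(\alpha)| \leq |Q|\,\E\big[\|X - X^n\|^2_{L^2(0,T)}\big]^{1/2}\E\big[\|X + X^n\|^2_{L^2(0,T)}\big]^{1/2} + 2\sqrt{T}\,|L|\,\E\big[\|X - X^n\|^2_{L^2(0,T)}\big]^{1/2}.
\]
The two remaining quantities are then controlled by the \emph{a priori} estimates already established. For the ``sum'' factor, the triangle inequality together with Lemma~\ref{L:aprioriL2} applied to both $X$ and $X^n$ bounds $\E[\|X + X^n\|^2_{L^2(0,T)}]$; the key point is that the constants stay uniform in $n$, because the $L^2$-convergence \eqref{assumption:approximation} makes $\|K^n\|_{L^2(0,T)}$ and $\|g_0^n\|_{L^2(0,T)}$ bounded, and \eqref{eq:supRn} gives $\sup_n\|R^n\|_{L^1(0,T)} < \infty$, whence $\E[\|X + X^n\|^2_{L^2(0,T)}] \leq c\,(1 + \E[\|\alpha\|^2_{L^2(0,T)}])$. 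For the ``difference'' factor, Lemma~\ref{lemma:XXn_approximation} gives $\E[\|X - X^n\|^2_{L^2(0,T)}] \leq c\,m_n\,(1 + \|R^n\|_{L^1(0,T)})$; bounding $\E[\|X\|^2_{L^2(0,T)}]$ appearing in $m_n$ by Lemma~\ref{L:aprioriL2} and using \eqref{eq:supRn} once more produces
\[
\E[\|X - X^n\|^2_{L^2(0,T)}] \leq c\,(1 + \E[\|\alpha\|^2_{L^2(0,T)}])\big(\|g_0^n - g_0\|^2_{L^2(0,T)} + \|K^n - K\|^2_{L^2(0,T)}\big).
\]

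Finally I would square the displayed bound for $|J(\alpha) - J^n(\alpha)|$, use $(a+b)^2 \leq 2a^2 + 2b^2$, substitute the two estimates above, and absorb the resulting factor $(1 + \E[\|\alpha\|^2_{L^2(0,T)}])^2$ into $c\,(2 + (\E[\|\alpha\|^2_{L^2(0,T)}])^2)$ via the elementary inequality $(1 + x)^2 \leq 2(2 + x^2)$. I do not expect a genuine analytic obstacle here: the argument is essentially bookkeeping layered on top of Lemmas~\ref{L:aprioriL2} and~\ref{lemma:XXn_approximation}. The only point requiring care is to check that every constant can be chosen independently of $n$, which rests entirely on the uniform bounds \eqref{eq:supRn} and on the boundedness of the convergent sequences $\|K^n\|_{L^2(0,T)}$ and $\|g_0^n\|_{L^2(0,T)}$; tracking exactly where $\E[\|\alpha\|^2_{L^2(0,T)}]$ enters (linearly through $m_n$, and through the \emph{a priori} bound on the state inside both factors) is what produces the claimed power two on that quantity.
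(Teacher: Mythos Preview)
Your proposal is correct and follows essentially the same approach as the paper: both decompose $J(\alpha)-J^n(\alpha)$ into the quadratic piece $(X-X^n)^\top Q(X+X^n)$ and the linear piece $2(X-X^n)^\top L$, apply Cauchy--Schwarz, and then invoke Lemmas~\ref{L:aprioriL2} and~\ref{lemma:XXn_approximation} together with the uniform bound \eqref{eq:supRn} to control the constants independently of $n$. The only cosmetic difference is that the paper bounds $\sup_n \E[\|X^n\|^2_{L^2(0,T)}]$ by appealing to the $L^2$-convergence $X^n\to X$ from Lemma~\ref{lemma:XXn_approximation}, whereas you obtain it directly from Lemma~\ref{L:aprioriL2} with uniformly bounded inputs; both routes are valid.
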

\begin{proof}
	Fix $\c \in \Acal$, we start by writing
	\begin{align*}
	J(\c) - J^n(\c) &=  \E\left[\int_0^T \left(X^\T_s Q X_s-(X_s^n)^\T Q X^n_s \right)ds\right] +  \E\left[\int_0^T (X_s - X^n_s)^\T L ds\right]\\
	&= \E\left[\int_0^T (X_s-X^n_s)^\T Q (X_s+X^n_s) ds\right] +  \E\left[\int_0^T (X_s - X^n_s)^\T L ds\right]\\
	&=  \textbf{I} + \textbf{II}
	\end{align*}
	so that 
	\bes{
		|J(\c) - J^n(\c)|^2 & \leq 2\left(\textbf{I}^2 + \textbf{II}^2\right).
	}
	We let $c$ denote a constant independent of $n$ that may vary from line to line. Successive  applications of Cauchy-–Schwarz inequality and Lemma   \ref{lemma:XXn_approximation} yield
	\bes{
		\textbf{II}^2 &\leq c \E\left[\|X-X^n\|^2_{L^2(0,T)} \right] \\
		& \leq c  \left(1+\|R^n\|_{L^1(0,T)}\right) \left(1 + \E\left[\|\alpha\|^2_{L^2(0,T)}\right]   \right) \left( \|g_0^n- g_0\|^2_{L^2(0,T)} + \|K^n-K\|^2_{L^2(0,T)} \right)
	}
	where $R^n$ is the resolvent of $c|K^n|^2$. By virtue of the $L^2$ convergence of the kernels $(K^n)_{n\geq 1}$ in \eqref{assumption:approximation},  $\|R^n\|_{L^1(0,T)}$ is uniformly bounded in $n$, see \eqref{eq:supRn}. 
	Whence,
	\bes{
		\textbf{II}^2
		& \leq c   \left(1 + \E\left[\|\alpha\|^2_{L^2(0,T)}\right]   \right) \left( \|g_0^n- g_0\|^2_{L^2(0,T)} + \|K^n-K\|^2_{L^2(0,T)} \right).
	} 
	Similarly, we get from Lemmas \ref{L:aprioriL2}   and \ref{lemma:XXn_approximation}
	\bes{
		\textbf{I}^2 
		& \leq c \left( \E \left[ \|X\|^2_{L^2(0,T)}\right] + \E \left[ \|X^n\|^2_{L^2(0,T)}\right] \right) \E\left[\|X-X^n\|^2_{L^2(0,T)} \right] \\
		& \leq c \left(1 + \left(\E\left[\|\alpha\|^2_{L^2(0,T)}\right]\right)^2    \right) \left( \|g_0^n- g_0\|^2_{L^2(0,T)} + \|K^n-K\|^2_{L^2(0,T)} \right),
	}
	where the last inequality  follows from the fact that  $\sup_{n\geq 1 }\E \left[ \|X^n\|^2_{L^2(0,T)}\right]<\infty$, since $\E\left[ \|X^n-X \|_{L^2(0,T)}^2\right] \to 0$ from Lemma \ref{lemma:XXn_approximation}.
	Combining the above yields the desired estimate. 
\end{proof}

\begin{lemma}
	\label{lemma:bounded_optimal_control}
	Assume  \eqref{assumption:QN}, \eqref{assumption:approximation} and that $Q$ is invertible. Let $\alpha^*$ and $\alpha^{n*}$ be the optimal controls produced by Theorem~\ref{T:mainoptimalsolution}  respectively for the problem \eqref{eq:original_problem} and its approximation \eqref{eq:approx_n}.  There exists a constant $\kappa > 0$ such that 
	\bes{\label{eq:alphanball}
		\E\left[\|\alpha^*\|^2_{L^2(0,T)}\right] + \sup_{n\geq 1}\E\left[\|\alpha^{n*}\|^2_{L^2(0,T)}\right] \leq \kappa.
	}
\end{lemma}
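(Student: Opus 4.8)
The plan is to exploit the optimality of $\alpha^*$ and $\alpha^{n*}$ against the trivial admissible control $\alpha \equiv 0 \in \Acal$, combined with the coercivity of the running cost $f$ in the control variable provided by \eqref{assumption:QN}. First I would record the two elementary pointwise bounds coming from the standing hypotheses: since $N - \lambda I_m \in \S^m_+$, one has $a^\top N a \geq \lambda |a|^2$ for all $a \in \R^m$; and since $Q$ is nonnegative \emph{and} invertible it is positive definite, so completing the square gives $x^\top Q x + 2 x^\top L = (x + Q^{-1}L)^\top Q (x + Q^{-1}L) - L^\top Q^{-1} L \geq - L^\top Q^{-1} L$ for all $x \in \R^d$. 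These two estimates control the two indefinite pieces of $f$ in \eqref{fquadra}.

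Next, by optimality of $\alpha^*$ (Theorem~\ref{T:mainoptimalsolution}) and admissibility of the zero control, $J(\alpha^*) \leq J(0)$. Inserting the two bounds above into the left-hand side yields
\[
\lambda \, \E\Big[ \int_0^T |\alpha^*_s|^2 \, ds\Big] - T\, L^\top Q^{-1} L \; \leq \; J(\alpha^*) \; \leq \; J(0),
\]
so that $\E[\|\alpha^*\|^2_{L^2(0,T)}] \leq \lambda^{-1}\big(J(0) + T\, L^\top Q^{-1}L\big)$. The right-hand side is finite, since $J(0) = \E[\int_0^T ((X^0_s)^\top Q X^0_s + 2 (X^0_s)^\top L)\,ds]$ is controlled by $\E[\|X^0\|^2_{L^2(0,T)}]$ via Cauchy--Schwarz, and the latter is finite by \eqref{eq:estimate moment X} (equivalently Lemma~\ref{L:aprioriL2}). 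This settles the bound for $\alpha^*$.

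For the uniformity in $n$, the same argument applied to the approximating problem \eqref{eq:approx_n} gives $\E[\|\alpha^{n*}\|^2_{L^2(0,T)}] \leq \lambda^{-1}\big(J^n(0) + T\, L^\top Q^{-1}L\big)$, so it remains to bound $\sup_n J^n(0)$. Applying Lemma~\ref{L:aprioriL2} to the state $X^{n,0}$ driven by the zero control with inputs $(g_0^n, K^n)$ gives $\E[\|X^{n,0}\|^2_{L^2(0,T)}] \leq c\, m_T(g_0^n, K^n, 0)\,(1 + \|R^n\|_{L^1(0,T)})$, where $m_T(g_0^n, K^n, 0) = \|g_0^n\|^2_{L^2(0,T)} + \|K^n\|^2_{L^2(0,T)}(\|\beta\|^2_{L^2(0,T)} + \|\gamma\|^2_{L^2(0,T)})$. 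Under \eqref{assumption:approximation} the quantities $\|g_0^n\|_{L^2(0,T)}$ and $\|K^n\|_{L^2(0,T)}$ are bounded uniformly in $n$ by the triangle inequality, and $\|R^n\|_{L^1(0,T)}$ is uniformly bounded by \eqref{eq:supRn}; hence $\sup_n \E[\|X^{n,0}\|^2_{L^2(0,T)}] < \infty$, and a further Cauchy--Schwarz bound on the linear term yields $\sup_n J^n(0) < \infty$. Taking $\kappa$ to be the larger of the two resulting constants establishes \eqref{eq:alphanball}.

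Since every ingredient is a consequence of results already proved, there is no deep obstacle here; the one point that genuinely requires the full hypotheses is the treatment of the indefinite linear term $2x^\top L$, which is precisely why the invertibility of $Q$ is assumed — without it the lower bound on $x^\top Q x + 2 x^\top L$, and hence the coercivity estimate $J(\alpha^*) \geq \lambda\,\E[\|\alpha^*\|^2_{L^2(0,T)}] - \text{const}$, could fail. The only genuine book-keeping care needed is to ensure that the constants entering the bound on $J^n(0)$ are independent of $n$, which reduces entirely to the uniform resolvent estimate \eqref{eq:supRn}.
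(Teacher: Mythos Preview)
Your argument is correct and follows essentially the same line as the paper: bound the cost from below via $a^\top N a \geq \lambda |a|^2$ and the completed square $x^\top Q x + 2x^\top L \geq -L^\top Q^{-1}L$, then compare the optimal cost to $J^n(0)$. The only minor difference is in the final step: the paper bounds $\sup_n J^n(0)$ by invoking Lemma~\ref{L:diffJ} with $\alpha=0$ to get $J^n(0)\to J(0)$, whereas you bound $\E[\|X^{n,0}\|^2_{L^2(0,T)}]$ directly from Lemma~\ref{L:aprioriL2} together with the uniform resolvent estimate \eqref{eq:supRn}. Both routes rely on the same underlying ingredient (the uniform $L^1$-bound on $R^n$ coming from the $L^2$-convergence of $K^n$), so this is a cosmetic rather than a substantive divergence; your version is arguably slightly more direct since it avoids the full difference estimate of Lemma~\ref{L:diffJ}.
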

\begin{proof}
	{ Under \eqref{assumption:QN},  there exists $c>0$ such that 
		$$ |a|^2 \leq c a^\top N a, \quad a \in \R^m. $$ 
		Denoting by $X^n=X^{n,\alpha^{n*}}$, it follows that
		\begin{align*}
		\E \left[ \|\c^{n*}\|^2_{L^2(0,T)}\right] &\leq   (1\vee c)\E \left[ \int_0^T \left( \left(\alpha_s^{n*}\right)^\top N \alpha_s^{n*} + \left(X_s^{n} + Q^{-1}L \right)^\top Q \left(X_s^{n} + Q^{-1}L \right) \right) ds\right]  \\
		& =   (1\vee c) \left(J^n(\alpha^{n*}) +  L^\top Q^{-1} L\right)\\
		&\leq (1\vee c) \left( J^n(\bold 0) + L^\top Q^{-1} L\right),
		\end{align*}
		for all $n \in \mathbb N$,
		where the last inequality follows from the optimality of $\alpha^{n*}$ and $\bold 0$ corresponds to the admissible control $\alpha_s= 0$, for all $s\leq T$.
	}
	Applying Lemma \ref{L:diffJ}, with  $\alpha=\alpha^n=\bold 0$, we obtain the convergence of the un-controlled functional cost:  $\lim_{n \to \infty} J^n(\bold 0)$ $=$ $J(\bold 0)$, 
	which ensures that $J^n(\bold 0)$ is uniformly bounded in $n$. We then deduce the existence of a constant $\kappa$ such that \eqref{eq:alphanball} holds.
\end{proof}

\vspace{1mm}

The proof of Theorem~\ref{T:mainstability} is now straightforward.

\begin{proof}[Proof of Theorem~\ref{T:mainstability}]
	Fix an arbitrary $\varepsilon >0$,  and let $c_\varepsilon>0$ to be determined later.
	First note that Lemma \ref{lemma:bounded_optimal_control} ensures  the existence a constant $\kappa >0$ such that for all $n\in\N$:
	\bes{	\label{eq:inf_in_ball}
		V^n_0 &= \inf_{\alpha \in \Acal} J^n(\alpha) = \inf_{\alpha \in \Acal_\kappa} J^n( \alpha), \\
		V_0 &= \inf_{\alpha \in \Acal} J( \alpha) = \inf_{\alpha \in \Acal_\kappa} J( \alpha), 
	}
	where $\Acal_\kappa$ $=$ $\{  \alpha \in \Acal : \E[\|\c\|^2_{L^2(0,T)}] \leq \kappa\}$.  Under condition \eqref{assumption:approximation}, there exists 
	${n_{\varepsilon} \in \N}$ such that for every $n \geq n_{\varepsilon}$ we have $ \|g_0^n- g_0\|^2_{L^2(0,T)}  + \|K^n-K\|^2_{L^2(0,T)} \leq c_\varepsilon$. By  Lemma \ref{L:diffJ}, it follows that 
	for any  $\c \in \Acal_\kappa$, and $n$ $\geq$ $n_{\varepsilon}$, 
	\begin{align*}
	|J(\c) - J^n(\c)|^2 &  \leq  c(2 + \kappa^2) c_\varepsilon \; = \;  \varepsilon,  
	\end{align*}
	by choosing $c_\varepsilon= \frac{\epsilon}{c(2+ \kappa^2)}$.  Combined with \eqref{eq:inf_in_ball}, this gives \eqref{eq:convvaluefunction} and also \eqref{convVn}.          
\end{proof}

\appendix

\section{An elementary lemma}


\begin{lemma}
	\label{L:L2kernel}
	Let $K$ be given as in  \eqref{eq:cmmu}, and $\bar K$  defined by 
	\begin{align}
	\bar K(t) &= \int_{\R_+} e^{-\theta t} |\mu|(d\theta), \quad t>0. 
	\end{align}
	Assume  that \eqref{eq:totalvar} holds, then 
	\begin{align*}
	\int_0^T |K(s)|^2 ds  \leq   \int_0^T \bar K(s)^2 ds < \infty.
	\end{align*}
	Furthermore, $|\mu|$ is $\sigma$-finite.
\end{lemma}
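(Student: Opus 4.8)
The plan is to treat the two assertions separately: first the chain $\int_0^T |K(s)|^2\,ds \le \int_0^T \bar K(s)^2\,ds < \infty$, and then the $\sigma$-finiteness of $|\mu|$.

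For the first inequality I would start from the pointwise domination $|K(s)| \le \bar K(s)$, valid for every $s>0$. This is immediate from the defining representation \eqref{eq:cmmu}, applied with the scalar function $\varphi(\theta)=e^{-\theta s}$, together with the general bound $\big|\int_{\R_+}\mu(d\theta)\varphi(\theta)\big| \le \int_{\R_+}|\mu|(d\theta)\,|\varphi(\theta)|$ recalled in the Notations. Squaring and integrating in $s$ over $[0,T]$ then yields $\int_0^T|K(s)|^2\,ds \le \int_0^T\bar K(s)^2\,ds$ at once.

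To obtain finiteness of the right-hand side, I would expand the square as $\bar K(s)^2 = \int_{\R_+^2} e^{-(\theta+\tau)s}\,|\mu|(d\theta)|\mu|(d\tau)$ and apply Tonelli's theorem — legitimate since every integrand is nonnegative — to exchange the $s$-integral with the double integral over $(\theta,\tau)$. Carrying out $\int_0^T e^{-(\theta+\tau)s}\,ds = \frac{1-e^{-(\theta+\tau)T}}{\theta+\tau}$ leaves the quantity $\int_{\R_+^2}\frac{1-e^{-(\theta+\tau)T}}{\theta+\tau}\,|\mu|(d\theta)|\mu|(d\tau)$. I would then invoke the elementary bound $\frac{1-e^{-xT}}{x}\le (1\vee T)\,(1\wedge x^{-1})$ for $x>0$ (checked by splitting into $x\le 1$, using $1-e^{-xT}\le xT$, and $x\ge 1$, using $\frac{1-e^{-xT}}{x}\le x^{-1}$), in the spirit of \eqref{eq:inequalityexp}, applied with $x=\theta+\tau$. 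The one delicate step is the factorization inequality $1\wedge(\theta+\tau)^{-1} \le (1\wedge\theta^{-1/2})(1\wedge\tau^{-1/2})$; I expect this to be the main obstacle, and I would verify it by recasting it as $\max(1,\theta+\tau)\ge \max(1,\sqrt\theta)\,\max(1,\sqrt\tau)$ and checking the three cases according to whether each of $\theta,\tau$ lies below or above $1$, using $\theta\ge\sqrt\theta$ when $\theta\ge 1$ in the mixed case and the arithmetic–geometric mean inequality $\theta+\tau\ge 2\sqrt{\theta\tau}\ge\sqrt{\theta\tau}$ when $\theta,\tau\ge 1$. Combining these two bounds collapses the double integral to $(1\vee T)\big(\int_{\R_+}(1\wedge\theta^{-1/2})\,|\mu|(d\theta)\big)^2$, which is finite by \eqref{eq:totalvar}.

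For the $\sigma$-finiteness of $|\mu|$, I would set $B_n=[0,n^2]$ for $n\ge 1$, so that $\bigcup_{n\ge1}B_n=\R_+$, and note that $1\wedge\theta^{-1/2}\ge 1/n$ for every $\theta\in B_n$. Consequently $|\mu|(B_n)\le n\int_{B_n}(1\wedge\theta^{-1/2})\,|\mu|(d\theta)\le n\int_{\R_+}(1\wedge\theta^{-1/2})\,|\mu|(d\theta)<\infty$ by \eqref{eq:totalvar}, which exhibits $\R_+$ as a countable union of sets of finite $|\mu|$-measure and hence proves $\sigma$-finiteness. I do not anticipate any real difficulty in this last part; the only care needed throughout is to keep the applications of Tonelli valid (automatic by nonnegativity) and to get the factorization inequality right.
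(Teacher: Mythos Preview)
Your proof is correct. The pointwise domination step and the $\sigma$-finiteness argument are essentially identical to the paper's (the paper takes $[0,n]$ with a $\sqrt{n}$ factor where you take $[0,n^2]$ with a factor $n$, which is the same idea).

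The one genuine difference is in the finiteness of $\int_0^T \bar K(s)^2\,ds$. You square first, apply Tonelli to reach the double integral $\int_{\R_+^2}\frac{1-e^{-(\theta+\tau)T}}{\theta+\tau}\,|\mu|(d\theta)|\mu|(d\tau)$, and then need the factorization inequality $1\wedge(\theta+\tau)^{-1}\le(1\wedge\theta^{-1/2})(1\wedge\tau^{-1/2})$ to decouple the variables. The paper instead works at the level of the $L^2(0,T)$ norm and applies Minkowski's integral inequality directly:
\[
\|\bar K\|_{L^2(0,T)} \;=\; \Big\|\int_{\R_+} e^{-\theta(\cdot)}\,|\mu|(d\theta)\Big\|_{L^2(0,T)} \;\le\; \int_{\R_+}\|e^{-\theta(\cdot)}\|_{L^2(0,T)}\,|\mu|(d\theta) \;=\; \int_{\R_+}\sqrt{\tfrac{1-e^{-2\theta T}}{2\theta}}\,|\mu|(d\theta),
\]
after which the single-variable bound \eqref{eq:inequalityexp} and \eqref{eq:totalvar} suffice. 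This is shorter and bypasses the case analysis you flagged as the ``delicate step''; on the other hand, your route is entirely self-contained and avoids invoking Minkowski for vector-valued integrands. Both arrive at the same conclusion with the same hypothesis.
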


\begin{proof}
	Since, $|K(t)| \leq \bar K(t)$, for all $ t >0$, it is clear that $\int_0^T |K(s)|^2 ds \leq \int_0^T \bar K(s)^2ds$.  Furthermore, 
	\begin{align*}
	\|\bar K\|_{L^2(0,T)}&=   \left\|\int_{\R_+} e^{-\theta(\cdot)} |\mu|(d\theta) \right\|_{L^2(0,T)}\\ &\leq  \int_{\R_+} \left\| e^{-\theta(\cdot)}  \right\|_{L^2(0,T)} |\mu|(d\theta)
	\; =  \; \int_{\R_+} \sqrt{\frac {1-e^{-2\theta T}}{2\theta}} |\mu|(d\theta)
	\end{align*}
	which is finite due to  inequality  \eqref{eq:inequalityexp} and condition \eqref{eq:totalvar}. To prove that $|\mu|$ is $\sigma$-finite, we observe  that $\R_+=\cup_{n \in \mathbb N} [0,n]$ such that for each $n\geq 1$,
	\begin{align*}
	|\mu|([0,n]) \; = \; \int_0^1    |\mu|(d\theta) + \int_1^n  |\mu|(d\theta)  &\leq  \int_0^1    |\mu|(d\theta) + \sqrt{n} \int_1^n  \theta^{-1/2} |\mu|(d\theta)\\
	&\leq   \sqrt{n}  \int_{\R_+}  (1\wedge \theta^{-1/2})  |\mu|(d\theta)<\infty.
	\end{align*}
\end{proof}


\small



\bibliographystyle{abbrv}

\bibliography{bibl}

\end{document}